\newcommand{\Rr}{\mathbb{R}}
\newcommand{\Ss}{\mathbb{S}}
\newcommand{\R}{\mathbb{R}}
\newcommand{\Z}{\mathbb{Z}}
\definecolor{cadmiumgreen}{rgb}{0.0, 0.42, 0.24}
\newcounter{main}
\numberwithin{equation}{section}
\newtheorem{theorem}{Theorem}[section]
\newtheorem{proposition}[theorem]{Proposition}
\newtheorem{lemma}[theorem]{Lemma}
\newtheorem{remark}{Remark}[section]
\newtheorem{example}{Example}[section]
\newtheorem{definition}{Definition}[section]
\newtheorem{maintheorem}{Theorem}
\newtheorem{maincorollary}{Corollary}
\newcommand{\blanksquare}{\,\,\,$\sqcup\!\!\!\!\sqcap$}
\title[Shades of hyperbolicity for hamiltonians]{\textbf{Shades of hyperbolicity for hamiltonians}}
\date{\today}
\author[M. Bessa]{M\'{a}rio Bessa}
\address{Departamento de Matem\'atica, Universidade da Beira Interior, Rua Marqu\^es d'\'Avila e Bolama,
  6201-001 Covilh\~a,
Portugal.}
\email{bessa@fc.up.pt}
\author[J. Rocha]{Jorge Rocha}
\address{Departamento de Matem\'atica, Universidade do Porto,
Rua do Campo Alegre, 687,
4169-007 Porto, Portugal}
\email{jrocha@fc.up.pt}
\author[M. J. Torres]{Maria Joana Torres}
\address{CMAT, Departamento de Matem\'atica e Aplica\c{c}\~{o}es, Universidade do Minho,
Campus de Gualtar,
4700-057 Braga, Portugal}
\email{jtorres@math.uminho.pt}
\begin{document}

\maketitle

\begin{abstract}

We prove that a Hamiltonian system $H\in C^2(M,\mathbb{R})$ is globally hyperbolic if any of the following statements holds: $H$ is robustly topologically stable; $H$ is stably shadowable; $H$ is stably expansive; and $H$ has the stable weak specification property.  Moreover, we prove that, for a $C^2$-generic Hamiltonian $H$, the union of the partially hyperbolic regular energy hypersurfaces and the closed elliptic orbits, forms a dense subset of $M$. As a consequence, any robustly transitive regular energy hypersurface of a $C^2$-Hamiltonian is partially hyperbolic. Finally, we prove that stably weakly shadowable regular energy hypersurfaces are partially hyperbolic.
\end{abstract}

\bigskip

{\footnotesize\textbf{Keywords:} \emph{Hamiltonian vector field, structural stability, dominated splitting, elliptic closed orbits.}

\textbf{2000 Mathematics Subject Classification:} Primary: 37J10,  37D30; Secondary: 70H05.}

\bigskip

\begin{section}{Introduction and a tour along the main results}
The Hamiltonian functions $H\colon M\rightarrow \mathbb R$ on symplectic $2n$-dimensional manifolds $M$ form a central subclass of all continuous-time dynamical systems generated by the $2n$ differential equations known as the \emph{Hamilton equations} $\dot{q}_i=\partial_{p_i} H$ and $\dot{p}_i=-\partial_{q_i} H$, where $(q_i,p_i)\in M$ and $i=1,..,n$. Their relevance follows from the vast range of applications throughout several branches of mathematics. Actually, the laws of mathematical physics are mostly expressed in terms of differential equations, and a well studied and successful subclass of these differential equations, whose solutions keep invariant a given symplectic form, are the Hamiltonian equations (see \cite{AbM,Arn}).

Stable properties of Hamiltonians, i.e. those properties that are shared by slightly perturbed systems of these continuous-time systems are a fundamental problem in describing the dynamics in the space of all Hamiltonians. Among them the structurally stable property, introduced in the mid 1930s by Andronov and Pontrjagin, plays a fundamental role. Roughly speaking it means that under small perturbations the dynamics are topologically equivalent.
Smale's program in the early 1960s aimed to prove the (topological) genericity of structurally stable systems. Although Smale's program was proved to be wrong one decade later, it played a fundamental role in the theory of dynamical systems and raised the problem of characterizing
structural stability as being essentially equivalent to uniform hyperbolicity.
In \cite{BRT} the authors characterized the structurally stable Hamiltonian systems as the Anosov ones.
The characterization of structurally stable systems, using topological and geometric dynamical properties, has been one of the main object of interest in the global
qualitative theory of dynamical systems, in the last 40 years.
The purpose of one of our main results (see \textit{Theorem~\ref{mainth1}}) is, thus,
to give a characterization of the structurally stable Hamiltonians by making use of the notions of topological stability, shadowing, expansiveness and specification. These properties although they seem quite different they are intertconnected with uniformly hyperbolic demeanor.

Anosov systems, and thus, structurally stable Hamiltonians, are topologically stable, expansive and satisfy the shadowing property. But the converse is not true. There are examples of systems far from the structurally stable ones that are
topologically stable\footnote{It is immediate that the topological stability is invariant by conjugacy. Moreover, by a result of Gogolev (see~\cite{Go}) the existence of a conjugacy between two maps where one of which is Anosov is not sufficient to guarantee that the other is Anosov.}, that are expansive (see Example~\ref{pedro}), and that satisfy the shadowing property (see ~\cite{YY} and also the symplectic curious example of existence of shadowing of transition chains of several invariant
tori alternating with Birkhoff zones of instability~\cite{GR}).

Therefore, the problem on the relationship between structural stability and topological and geometric properties of the system is not trivial. The passage to $C^2$-interiors of sets of Hamiltonians that have these properties developed in the present work became one effective approach to the solution of this problem.
A property is $C^2$-stable if once the property holds for a $C^2$-Hamiltonian it also holds for every $C^2$-nearby Hamiltonian.
Let us explain the dynamical properties that we deal with and state Theorem~\ref{mainth1}.

The \emph{shadowing} of a given set of points which are an ``almost orbit" of some system by a true orbit appears in many branches of dynamical systems and is quite often related with hyperbolicity. Actually, the computational estimates, fitted with a certain error of orbits, are meaningless if they cannot be realized by genuine orbits of the given system, and, in this sense, are mere pixel inaccuracy typical of the computational framework. Despite the fact that shadowable systems can be non hyperbolic the stability of shadowing goes a long way when proving hyperbolicity. We refer the work developed in ~\cite{B2,F2,LS,P1,S2,WGW} both for flows and diffeomorphisms and in dissipative and conservative contexts. Here we will prove that stability of shadowing is equivalent to hyperbolicity.

The notion of \emph{topological stability} (see the precise definition in \S \ref{topsta}), grew parallel to the theory of structural stability, and was first introduced by Walters in (\cite{W}) when proving that Anosov diffeomorphisms are topologically stable. Then, Nitecki proved that topological stability was a necessary condition to get Axiom A plus strong transversality (see \cite{Ni}). In the late 1970's (\cite{Ro2}), Robinson proved that Morse-Smale flows are topologically stable, and in the mid 1980's, Hurley obtained necessary conditions for topological stability (see \cite{H,H2,H3}). Here we generalize the results for flows in ~\cite{BR2,MSS2}) to the Hamiltonian context (see also \cite{BR3}) which says that robust topologically stable vector fields are hyperbolic.

A dynamical system is said to be \emph{expansive} when, in brief terms, if two points stay near for future and past iterates, then they must be equal. We can say, in a general scope, that the system has sensitivity to the initial conditions, because, when different, two points must be separated by forward or backward iteration. This notion was first developed in the 1950's (see~\cite{Wi}) and, in the flow context, initialized by the studies of Bowen and  Walters (see~\cite{BW}). In this paper we generalize the recent results in ~\cite{F2,MSS,Se} by proving that the stability of expansiveness is equivalent to hyperbolicity.

The concept of \emph{weak specification} although intricate (see \S\ref{spec}) is quite well summarized in simple words, for diffeomorphisms, in ~\cite[page 193]{DGS}: \emph{``The weak specification means that whenever there are two pieces of orbits $\{f^n(x_1)\colon a_1\leq n \leq b_1\}$ and $\{f^n(x_2)\colon a_2\leq n \leq b_2\}$, they may be approximated up to $\epsilon$ by one single orbit - the
orbit of $x$ - provided that the time for \emph{switching} from the
first piece of orbit to the second (namely $a_2-b_1$) and the
time for \emph{switching back} (namely $p - (b_2-a_1)$)\footnote{The number $p$ is any number greater or equal than $K(\epsilon)+b_2-a_1$.} are larger than
$K(\epsilon)$, this number $K(\epsilon)$ being independent of the pieces of orbit,
and in particular independent of their length."}\\
Several authors obtained hyperbolicity from the hypothesis that the system has the stable specification property (see ~\cite{ASS,SSY}). We point out that their arguments were supported on a change of index argument in the hyperbolic closed orbits. However, in the symplectic setting such situation is impossible to happen because the index is constant and equal to $n-1$. Here we obtain similar results for Hamiltonians using a new symplectic approach.

Summarizing, in Theorem 1, we prove that a Hamiltonian is globally hyperbolic (Anosov) if any of the following properties is stable: topological stability, shadowing, expansiveness, and specification. Thus, we call these properties \textit{shades of hyperbolicity}.
The proof is a direct consequence of the following. First, we refer that it was recently proved that a star Hamiltonian is Anosov (see \textit{Theorem~\ref{thm2}}, proved in
~\cite{BFR} for $n=2$ and generalized by the authors in~\cite{BRT} for $n \geq 2$).
We recall that a Hamiltonian is a {\em star Hamiltonian} if the property of having all periodic orbits of hyperbolic type is stable.
Finally, we prove, in \textit{Theorem~\ref{thm3}}, that the stability of any of the shade properties just described implies that the Hamiltonian is a star system.

Dynamical systems exhibiting dense orbits are called \emph{transitive}. Informally speaking this means that the whole manifold is indecomposable. Those systems for which this property remains valid for any perturbation are called \emph{robustly transitive}. Since the pioneering work of Ma\~n\'e (see~\cite{M}) several other results were obtained guaranteeing that robust transitivity, with respect to $C^1$-topology, implies a certain form of hyperbolicity (see e.g. ~\cite{AM,BR1,BDP,BGV,HT,V}). Here, and as a direct consequence a dichotomy that we discuss in the sequel, we shall prove that robustly transitive Hamiltonians are partially hyperbolic. Moreover, we shall prove that stably ergodic Hamiltonians are partially hyperbolic.

The shadowing property in the weak sense first appear in the paper by Corless and Pilyugin (see ~\cite{CP}) when related to the genericity (with respect to the $C^0$-topology) of shadowing among homeomorphisms. In simple terms \emph{weak shadowing} allows that the ``almost orbits" may be approximated by true orbits if one forgets the time parameterization and consider only the distance between the orbit and the ``almost orbit" as two subsets in the manifold. There exist dynamical systems without the weak shadowing (see ~\cite[Example 2.12]{P}) and dynamical systems satisfying the weak shadowing but not the shadowing property (\cite[Example 2.13]{P}. In the present paper we generalize a recent result (see ~\cite{BLV,BVa}) for the setting of Hamiltonians. More precisely, we obtain partial hyperbolicity under the hypothesis of stable weak shadowing (see \textit{Theorem~\ref{thm6}}).

Generic properties of Hamiltonians, i.e. those properties that are shared by a countable intersection of open and dense subsets of these continuous-time systems, are thus of great importance and interest since they give us the typical behavior, in Baire's category sense, that one could expect from a wide class of systems (cf.~\cite{BFR, BFR2, MBJLD, MBJLD2, BRT, N, R0}). There are obvious constraints to the amount of information we can obtain from a certain cluster of generic systems. Howsoever, it is of great importance and utility to learn that a given system can be slightly perturbed
in order to obtain a system whose global and local dynamics we understand quite well.

Questions concerning the generic behaviour of Hamiltonians were first raised by Robinson (see \cite{R0}). One of our main generic results (see \textit{Theorem~\ref{thm5}}) is the generalization of a result stated in~\cite{N} by Newhouse and proved in~\cite{MBJLD2} for the $4$-dimensional context (or a weak $2n$-dimensional version): $C^2$-generic Hamiltonians are of hyperbolic type or else exhibit dense $1$-elliptic closed orbits. We prove that
most Hamiltonians, from the generic viewpoint, have \textbf{only} two types of well-di\-ffe\-ren\-ti\-a\-ted behavior: partial hyperbolicity (chaotic type, cf. ~\cite{BDV}) or else lots of elliptic closed orbits (KAM type, cf. \cite{Y}).
Of course that, in the 4-dimensional case,  $1$-elliptic closed orbits are totally elliptic and, moreover, partial hyperbolicity is actually hyperbolicity. See (\cite{MBJLD2}) for a detailed proof of this result. It is still an open and quite interesting question to know if these type of results hold for mechanic systems (see ~\cite[\S 8]{MBJLD3}).

With respect to the discrete-time case, in \cite{N}, it was obtained that $C^1$-generic symplectomorphisms in surfaces are Anosov (uniformly hyperbolic) or else the elliptic points are dense. Long after Newhouse's proof, Arnaud (see ~\cite{Ar2002}) proved the 4-dimensional version of this result, namely that  $C^1$-generic symplectomophisms are Anosov, partially hyperbolic or have dense elliptic periodic points. Finally, Saghin and Xia (\cite{SX}) proved the same result but for any dimension completing the program for the discrete case. In this paper, and among other issues, we developed, in \textit{Theorem~\ref{thm4}}, the approach for the continuous-time case of Saghin-Xia's theorem.

We emphasize that in this paper we are restricted to the $C^2$-topology for Hamiltonians and to the $C^1$-topology for the associated vector field, since our proofs use several technical results which are proved for this topology (see the perturbation results in \S \ref{hamilperturbsection}).

\end{section}

\bigskip

\begin{section}{Hamiltonian systems}

\begin{subsection}{The Hamiltonian framework}

Let $(M,\omega)$ be a symplectic manifold, where $M$ is a $2n$-dimensional ($n \geq 2$), compact, boundaryless, connected and
smooth Riemannian manifold, endowed with a symplectic form $\omega$.
A \textit{Hamiltonian} is a real-valued $C^r$ function on $M$,  $2\leq r\leq \infty$. We denote by $C^r(M,\mathbb{R})$
the set of $C^r$-Hamiltonians on $M$. From now on, we shall restrict to the $C^2$-topology, and thus we set $r=2$.
Associated with $H$, we have the \textit{Hamiltonian vector field} $X_H$ defined by $\omega(X_H(p),u)=\nabla H_p(u)$, for all $u \in T_p M$,
which generates the
Hamiltonian flow $X_H^t$. Observe that $H$ is $C^2$ if and only if $X_H$ is $C^1$ and that, since $H$ is continuous and $M$ is compact, $Sing(X_H)\neq\emptyset$, where $Sing(X_H)$ denotes the singularities of $X_H$ or, in other words, the critical points of $H$ or the equilibria of $X_H^t$. Let $\mathcal{R}(H)=M\setminus Sing(X_H)$ stands for the set of regular points.

A scalar $e\in H(M)\subset \mathbb{R}$ is called an \textit{energy} of $H$ and the pair $(H,e)$ is called \textit{Hamiltonian level}.
An \textit{energy hypersurface} $\mathcal{E}_{H,e}$ is a connected component of $H^{-1}(\left\{e\right\})$, called \textit{energy level set}, and it is \textit{regular} if it does not contain singularities.
Observe that a regular energy hypersurface is a $X_H^t$-invariant, compact and $(2n-1)$-dimensional manifold.
The energy level set $H^{-1}(\left\{e\right\})$ is said \textit{regular} if any energy hypersurface of
$H^{-1}(\left\{e\right\})$ is regular.
If $H^{-1}(\left\{e\right\})$ is regular, then $H^{-1}(\left\{e\right\})$ is the union of a finite number of energy hypersurfaces.
Finally, a Hamiltonian level  $(H,e)$ is said \textit{regular} if the energy level set $H^{-1}(\left\{e\right\})$ is regular.

A \textit{Hamiltonian system} is a triple $(H,e, \mathcal{E}_{H,e})$, where $H$ is a Ha\-mil\-ton\-ian, $e$ is an energy and $\mathcal{E}_{H,e}$ is a regular connected component of $H^{-1}(\{e\})$.

Fixing a small neighbourhood $\mathcal{W}$ of a regular $\mathcal{E}_{H,e}$, there exist a small neighbourhood $\mathcal{U}$ of $H$ and $\epsilon>0$ such that, for all $\tilde{H} \in \mathcal{U}$ and $\tilde{e} \in (e-\epsilon,e+\epsilon)$, $\tilde{H}^{-1}(\{\tilde{e}\})\cap \mathcal{W}=\mathcal{E}_{\tilde{H},\tilde{e}}$. We call $\mathcal{E}_{\tilde{H},\tilde{e}}$ the \textit{analytic continuation} of $\mathcal{E}_{H,e}$.

In the space of Hamiltonian systems we consider the topology generated by a fundamental systems of neighbourhoods.
Given a Hamiltonian system $(H,e, \mathcal{E}_{H,e})$ we say that $\mathcal{V}$ is a {\em neighbourhood} of $(H,e, \mathcal{E}_{H,e})$ if there exist a small neighbourhood $\mathcal{U}$ of $H$ and $\epsilon>0$ such that for all $\tilde{H} \in \mathcal{U}$ and $\tilde{e} \in (e-\epsilon,e+\epsilon)$ one has that the analytic continuation $\mathcal{E}_{\tilde{H},\tilde{e}}$ of $\mathcal{E}_{H,e}$ is well-defined.

Since the symplectic form $\omega$ is non-degenerate, given $H\in C^2(M,\mathbb{R})$ and $p\in M$, we know that $\nabla H_p=0$ is equivalent to $X_H(p)=0$. Therefore, the extreme values of a Hamiltonian $H$ are exactly the singularities of the associated Hamiltonian vector field $X_H$.

Given a Hamiltonian level $(H,e)$, let $\Omega(H|_{\mathcal{E}_{H,e}})$ be the set of non-wandering points of $H$ on the energy hypersurface $\mathcal{E}_{H,e}$, that is, the points $x\in\mathcal{E}_{H,e}$ such that, for every neighborhood $U$ of $x$ in $\mathcal{E}_{H,e}$, there is $\tau>0$ such that $X^{\tau}_H(U)\cap U\neq\emptyset$.

By \textit{Liouville's Theorem}, the symplectic manifold $(M,\omega)$ is also a volume manifold (see, for example, \cite{AbM}).
This means that the $2n$-form $\omega^n=\omega\wedge\overset{n}{...}\wedge\omega$ (wedging $n$-times) is a volume form and induces a measure $\mu$ on $M$, which is called the Lebesgue measure associated to $\omega^n$.
Notice that the measure $\mu$ on $M$ is invariant by the Hamiltonian flow.
So, given a re\-gu\-lar Hamiltonian level $(H,e)$, we induce a volume form $\omega_{\mathcal{E}_{H,e}}$ on each energy hypersurface $\mathcal{E}_{H,e}\subset H^{-1}(\left\{e\right\})$, where for all $p\in \mathcal{E}_{H,e}$:
\begin{align}
\omega_{\mathcal{E}_{H,e}}:\;&T_p\mathcal{E}_{H,e} \times T_p\mathcal{E}_{H,e}\times T_p\mathcal{E}_{H,e} \longrightarrow \mathbb{R}\nonumber\\
& (u,v,w)\longmapsto \omega^n(\nabla H_p,u,v,w)\nonumber
\end{align}

The volume form $\omega_{\mathcal{E}_{H,e}}$ is $X_H^t$-invariant.
Hence, it induces an invariant volume measure $\mu_{\mathcal{E}_{H,e}}$ on $\mathcal{E}_{H,e}$ which is a finite measure, since any energy hypersurface is compact. Observe that, under these conditions, we have that $\mu_{\mathcal{E}_{H,e}}$-a.e. $x\in \mathcal{E}_{H,e}$ is recurrent, by the \textit{Poincar\'{e} Recurrence Theorem}.
\end{subsection}


\begin{subsection}
{Transversal linear Poincar\'{e} flow and hyperbolicity}\label{translinpoinsec}

Let us begin with the definition of the \textit{transversal linear Poincar\'{e} flow}. After, we state some results using this linear flow. Consider a Hamiltonian vector field $X_H$ and a regular point $x$ in $M$ and let $e=H(x)$.
Define $\mathcal{N}_x:=N_x\cap T_xH^{-1}(\left\{e\right\})$, where
$N_x=(\mathbb{R} X_H(x))^\perp$ is the normal fiber at $x$, $\mathbb{R} X_H(x)$ stands for the flow direction at $x$, and
$T_xH^{-1}(\left\{e\right\})=Ker\: \nabla H_x$ is the tangent space to the energy level set.
Thus, $\mathcal{N}_x$ is a ($2n-2$)-dimensional bundle.

\begin{definition}\label{TLPF}
The \textbf{transversal linear Poincar\'{e} flow} associated to $H$ is given by
\begin{align}
\Phi_H^t(x): &\ \ \mathcal{N}_{x}\rightarrow \mathcal{N}_{X_H^t(x)}\nonumber\\
&\ \ v\mapsto \Pi_{X_H^t(x)}\circ D{X_H}^t_x(v),\nonumber
\end{align}
where $\Pi_{X_H^t(x)}: T_{X_H^t(x)}M\rightarrow \mathcal{N}_{X_H^t(x)}$ denotes the canonical orthogonal projection.
\end{definition}

Observe that $\mathcal{N}_x$ is $\Phi^t_{H}(x)$-invariant. It is well-known (see e.g. ~\cite{AbM}) that, given a regular point $x\in\mathcal{E}_{H,e}$, then $\Phi_H^t(x)$ is a linear symplectomorphism for the symplectic form $\omega_{\mathcal{E}_{H,e}}$, that is, $\omega_{\mathcal{E}_{H,e}}(u,v)=\omega_{\mathcal{E}_{H,e}}(\Phi_H^t(x)\:u,\Phi_H^t(x)\:v)$, for any $u,v\in \mathcal{N}_{x}$.

For any symplectomorphism, in particular for $\Phi_H^t(x)$, we have the following result.

\begin{theorem} (Symplectic eigenvalue theorem, \cite{AbM})\label{sympeigen}
Let $f$ be a symplectomorphism in $M$, $p\in M$ and $\sigma$ an eigenvalue of $Df_p$ of multiplicity $m$. Then $1/\sigma$ is an eigenvalue of $Df_p$ of multiplicity $m$. If $\sigma$ is non real, then $\overline{\sigma}$ and $1/\overline{\sigma}$ are also eigenvalues of $Df_p$. Moreover, the multiplicity of the eigenvalues $+1$ and $-1$, if they occur, is even.
\end{theorem}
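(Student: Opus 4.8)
The statement is purely a fact of linear symplectic algebra, so the plan is first to reduce it to a matrix computation. Fixing a Darboux (symplectic) basis of the symplectic vector space $(T_pM,\omega_p)$, the form $\omega_p$ is represented by the standard non-degenerate skew-symmetric matrix $J$, and $Df_p$ is represented by a real matrix $A$ obeying the symplectic relation $A^{T}JA=J$. All four assertions will then be read off from the reciprocal symmetry that this single relation forces on the characteristic polynomial $p_A(\sigma)=\det(\sigma I-A)$.

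For the pairing $\sigma\leftrightarrow 1/\sigma$, I would rewrite the symplectic relation as $A^{T}=JA^{-1}J^{-1}$, so that $A^{T}$ is conjugate (via $J$) to $A^{-1}$. Since any matrix and its transpose share the same characteristic polynomial, it follows that $A$ and $A^{-1}$ have identical characteristic polynomials. The eigenvalues of $A^{-1}$ are exactly the reciprocals of those of $A$, with the same multiplicities (note $A$ is invertible, as $A^{T}JA=J$ gives $\det(A)^2=1$); hence the multiset of eigenvalues of $A$ is invariant under $\sigma\mapsto 1/\sigma$, which yields that $1/\sigma$ is an eigenvalue of multiplicity $m$ whenever $\sigma$ is. Because $A$ is real, $p_A$ has real coefficients, so $\overline{\sigma}$ is an eigenvalue of the same multiplicity as $\sigma$; combining this with the reciprocal symmetry produces the full set $\{\sigma,\overline{\sigma},1/\sigma,1/\overline{\sigma}\}$ when $\sigma$ is non-real (this set degenerating to the conjugate pair when $|\sigma|=1$).

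For the evenness of the multiplicities of $\pm 1$, I would first observe that the reciprocal and conjugate symmetries organize every eigenvalue different from $\pm 1$ into reciprocal real pairs $\{\sigma,1/\sigma\}$ or conjugate quadruples (or unit-modulus conjugate pairs), each contributing an even total multiplicity and a product of its eigenvalues equal to $1$. Since $\dim=2n$ is even, the combined multiplicity of $+1$ and $-1$ is therefore even. To separate the two, I would use $\det A=1$: writing the determinant as the product of all eigenvalues, every such pair or quadruple contributes $1$, the eigenvalue $+1$ contributes $1$, while $-1$ with multiplicity $b$ contributes $(-1)^{b}$, forcing $(-1)^{b}=1$ and hence $b$ even; the evenness of the combined multiplicity of $\pm 1$ then forces the multiplicity of $+1$ to be even as well.

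The main obstacle is precisely this last step. The reciprocal symmetry on its own only delivers that the multiplicities of $+1$ and $-1$ are even \emph{in aggregate}; splitting them relies on knowing that the determinant of a real symplectic matrix is exactly $+1$, not merely $\pm 1$. I would justify this by the Pfaffian identity $\mathrm{Pf}(A^{T}JA)=\det(A)\,\mathrm{Pf}(J)$, which together with $A^{T}JA=J$ and $\mathrm{Pf}(J)\neq 0$ gives $\det A=1$. Everything else is a routine bookkeeping of the symmetry $\sigma\mapsto 1/\sigma$ on the spectrum.
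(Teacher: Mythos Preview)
Your proof is correct. The paper does not actually prove this theorem; it is stated there as a quotation from Abraham--Marsden \cite{AbM} and used as background for the transversal linear Poincar\'e flow. So there is no ``paper's proof'' to compare against, only the cited reference.

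What you wrote is the standard linear-algebra argument and it is complete. The reduction to a real symplectic matrix $A$ via a Darboux basis is legitimate; the conjugacy $A^{T}=JA^{-1}J^{-1}$ correctly gives that $A$ and $A^{-1}$ share the same characteristic polynomial, yielding the $\sigma\leftrightarrow 1/\sigma$ symmetry with multiplicities; realness of $A$ gives the $\sigma\leftrightarrow\overline{\sigma}$ symmetry; and your bookkeeping for the parity of the multiplicities of $\pm 1$ is sound. The one genuinely nontrivial point, as you identified, is that $\det A=+1$ rather than merely $\pm 1$, and the Pfaffian identity $\mathrm{Pf}(A^{T}JA)=\det(A)\,\mathrm{Pf}(J)$ disposes of it cleanly. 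There is nothing to add.
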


The proof of the following result can be found in~\cite[Section 2.3]{MBJLD}.

\begin{lemma}\label{hyperbolictranspoinc}
Take a Hamiltonian $H\in C^2(M,\mathbb{R})$ and let $\Lambda$ be an $X_H^t$-invariant, regular and compact subset of $M$. Then, $\Lambda$ is uniformly hyperbolic for $X_H^t$ if and only if the induced transversal linear Poincar\'{e} flow $\Phi_H^t$ is uniformly hyperbolic on $\Lambda$.
\end{lemma}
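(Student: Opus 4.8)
The plan is to work with the $DX_H^t$-invariant decomposition of the energy-level tangent bundle. Recall that uniform hyperbolicity of $X_H^t$ on $\Lambda$ means the existence of a continuous $DX_H^t$-invariant splitting $\operatorname{Ker}dH_x = E^s_x \oplus \mathbb{R}X_H(x) \oplus E^u_x$ over $x \in \Lambda$, with uniform exponential contraction on $E^s$ in forward time and on $E^u$ in backward time (there can be no genuine hyperbolic splitting of all of $T_\Lambda M$, since energy conservation makes the direction transverse to the level sets neutral as well). Since the flow preserves energy we have $dH_{X_H^t(x)}\circ DX_H^t_x = dH_x$, so $DX_H^t_x$ carries $\operatorname{Ker}dH_x = T_xH^{-1}(\{H(x)\})$ onto $\operatorname{Ker}dH_{X_H^t(x)}$, and $DX_H^t_x\,X_H(x) = X_H(X_H^t(x))$. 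Because $X_H(x)\in\operatorname{Ker}dH_x$ and $\mathcal{N}_x = (\mathbb{R}X_H(x))^\perp\cap\operatorname{Ker}dH_x$, we get the orthogonal splitting $\operatorname{Ker}dH_x = \mathbb{R}X_H(x)\oplus\mathcal{N}_x$, and $\Pi_x$ annihilates exactly the flow direction. The bridge between the two flows is the identity, valid for $w\in\mathcal{N}_x$, $DX_H^t_x w = \Phi_H^t(x)w + \beta_t(x,w)\,X_H(X_H^t(x))$, where $\beta_t(x,w)\in\mathbb{R}$ depends linearly on $w$ and records the shear along the flow direction. Throughout I use that $\Lambda$ is compact and regular, so $\|X_H\|$ is bounded above and away from $0$ on $\Lambda$; the symplectic structure plays no essential role here, this being the general equivalence between a flow and its linear Poincar\'{e} flow.

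For the \emph{direct implication}, I would set $\mathcal{N}^s_x = \Pi_x(E^s_x)$ and $\mathcal{N}^u_x = \Pi_x(E^u_x)$. As $E^{s}_x, E^{u}_x \subset \operatorname{Ker}dH_x$ are complementary to $\mathbb{R}X_H(x)$ in the hyperbolic splitting, $\Pi_x$ restricts to linear isomorphisms $E^{s,u}_x \to \mathcal{N}^{s,u}_x$, and, by continuity of the splitting over the compact set $\Lambda$, the angles between $E^{s,u}$ and $\mathbb{R}X_H$ are bounded below, yielding $\|\Pi_x v\| \ge c\|v\|$ for $v\in E^{s,u}_x$. Invariance $\Phi_H^t(x)\mathcal{N}^{s,u}_x = \mathcal{N}^{s,u}_{X_H^t(x)}$ follows from the bridge identity together with $\Pi\,X_H = 0$ and $DX_H^t E^{s,u} = E^{s,u}$. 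The contraction and expansion estimates for $\Phi_H^t$ then descend from those for $DX_H^t$: the upper bound uses $\|\Pi\|\le 1$, while the lower bound $\|\Pi_x v\|\ge c\|v\|$ converts the $\mathcal{N}$-norms back to $T_xM$-norms. Hence $\mathcal{N} = \mathcal{N}^s\oplus\mathcal{N}^u$ is a hyperbolic splitting for $\Phi_H^t$.

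For the \emph{converse}, I would reconstruct $E^{s,u}$ as graphs over $\mathcal{N}^{s,u}$ inside $\operatorname{Ker}dH$. Seeking $E^s_x = \{\,w + g_x(w)X_H(x) : w\in\mathcal{N}^s_x\,\}$ invariant under $DX_H^t$ forces, via the bridge identity, the cohomological relation $g_{X_H^t(x)}(\Phi_H^t(x)w) = g_x(w) + \beta_t(x,w)$. Since $\Phi_H^t(x)w\to 0$ exponentially on $\mathcal{N}^s$, I solve this by summing the cocycle $\beta$ along the forward orbit, exponential decay guaranteeing convergence and a uniform bound on $g$, and hence a continuous $DX_H^t$-invariant subbundle $E^s$; the bundle $E^u$ is obtained symmetrically in backward time. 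Finally, the bridge identity together with the uniform bounds on $\|X_H\|$ transfers the exponential rates of $\Phi_H^t$ on $\mathcal{N}^{s,u}$ to $DX_H^t$ on $E^{s,u}$, so that $\operatorname{Ker}dH = E^s\oplus\mathbb{R}X_H\oplus E^u$ is the desired hyperbolic splitting, i.e. $\Lambda$ is uniformly hyperbolic for $X_H^t$.

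The \emph{main obstacle} is the converse direction: one must control the shear term $\beta_t$ along the flow direction so that the lift $g$ is well defined and bounded, and so that the exponential rates survive the passage between $\Phi_H^t$ on $\mathcal{N}$ and $DX_H^t$ on $\operatorname{Ker}dH$. This is precisely where compactness and regularity of $\Lambda$ — giving two-sided bounds on $\|X_H\|$ and hence a uniformly bounded shear per unit time — are indispensable.
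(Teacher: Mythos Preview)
The paper does not actually prove this lemma; it simply refers the reader to \cite[Section~2.3]{MBJLD}. Your argument is correct and is the standard one for this equivalence (valid for any nonsingular flow on a compact invariant set, not just Hamiltonians, as you rightly note): project $E^{s,u}$ onto $\mathcal{N}$ for the forward implication, and for the converse lift $\mathcal{N}^{s,u}$ to $DX_H^t$-invariant graphs over the flow direction by summing the shear cocycle $\beta$, with compactness and regularity of $\Lambda$ providing the uniform bounds needed for convergence and for transferring the exponential rates. This is essentially the approach in the cited reference, so there is nothing to contrast.
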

So, we can define a \textsl{uniformly hyperbolic set} as follows.

\begin{definition}\label{Phd}
Let $H\in C^2(M,\mathbb{R})$.
An $X_H^t$-invariant, compact and regular set $\Lambda\subset M$ is \textbf{uniformly hyperbolic} if $\mathcal{N}_{\Lambda}$ admits a $\Phi_H^t$-invariant splitting $\mathcal{N}_{\Lambda}^s\oplus \mathcal{N}_{\Lambda}^u$ such that there is $\ell>0$ satisfying
\begin{center}
$\|\Phi_H^{\ell}(x)|_{\mathcal{N}^s_x}\|\leq \dfrac{1}{2}$ and $\|\Phi_H^{-{\ell}}(X_H^{\ell}(x))|_{\mathcal{N}^u_{X_H^{\ell}(x)}}\|\leq\dfrac{1}{2}$, for any $x\in\Lambda$.
\end{center}
\end{definition}
We remark that the constant $\frac{1}{2}$ can be replaced by any constant $\theta\in(0,1)$ with the detriment of changing the value of $\ell$.

Given $x\in \mathcal{R}(H)$, we say that $x$ is a \textit{periodic point} of the Hamiltonian $H$ if $X^t_H(x)=x$ for some $t$. The smallest $t_0>0$ satisfying the condition above is called \emph{period} of $x$; in this case, we say that the orbit of $x$ is a \textit{closed orbit} of period $t_0$.
Accordingly with Definition~\ref{Phd},
a periodic point $x$ is \textit{hyperbolic} if
there exists a splitting of the normal subbundle $\mathcal{N}$ along the orbit of $x$ that satisfies the condition above.

Now, we state the definition of \textsl{dominated splitting}, by using the transversal linear Poincar\'{e} flow.

\begin{definition}\label{DDPoinc}
Take $H\in C^2(M,\mathbb{R})$ and let $\Lambda$ be a compact, $X_H^t$-invariant and regular subset of $M$.
Consider a $\Phi_H^t$-invariant splitting $\mathcal{N}=\mathcal{N}^1\oplus\cdots\oplus \mathcal{N}^{k}$ over $\Lambda$, for $1\leq k\leq 2n-2$, such that all the subbundles have constant dimension.
This splitting is \textbf{dominated} if there exists ${\ell}>0$ such that, for any $0\leq i<j\leq k$,
\begin{center}
${\|\Phi_H^{\ell}(x)|_{{\mathcal{N}}_x^i}\|}\cdot{\|\Phi_H^{-\ell}(X_H^{\ell}(x))|_{{\mathcal{N}}_{X_H^{\ell}(x)}^j}\|}\leq\dfrac{1}{2}, \: \: \forall \: x\in \Lambda.$
\end{center}\end{definition}

Finally, we state the definition of \textsl{partial hyperbolicity}, of the transversal linear Poincar\'{e} flow.

\begin{definition}\label{PH}
Take $H\in C^2(M,\mathbb{R})$ and let $\Lambda$ be a compact, $X_H^t$-invariant and regular subset of $M$.
Consider a $\Phi_H^t$-invariant splitting $\mathcal{N}=\mathcal{N}^u\oplus \mathcal{N}^c \oplus \mathcal{N}^{s}$ over $\Lambda$ such that all the subbundles have constant dimension and at least two of them are non-trivial.
This splitting is \textbf{partially hyperbolic} if there exists ${\ell}>0$ such that,
\begin{enumerate}
\item $\mathcal{N}^u$ is uniformly hyperbolic and expanding;
\item $\mathcal{N}^s$ is uniformly hyperbolic and contracting and
\item $\mathcal{N}^u$ $\ell$-dominates $\mathcal{N}^c$ and $\mathcal{N}^c$ $\ell$-dominates $\mathcal{N}^s$.
\end{enumerate}
\end{definition}

In general, along this paper, we consider these three structures defined in a set $\Lambda$ which is the whole energy level.

\begin{remark}\label{Mane}
It was proved in \cite{BV} that, in the symplectic world, the existence of a do\-mi\-na\-ted splitting implies partial hyperbolicity. More precisely, If $\mathcal{N}^u\oplus \mathcal{N}^2$ is a dominated splitting, with $\dim \mathcal{N}^u\leq \dim\mathcal{N}^2$, then $\mathcal{N}^2$ splits invariantly as $\mathcal{N}^2 = \mathcal{N}^c \oplus \mathcal{N}^s$, with $\dim \mathcal{N}^s = \dim \mathcal{N}^u$. Furthermore, the splitting $\mathcal{N}^u \oplus\mathcal{N}^c  \oplus \mathcal{N}^s$ is dominated, $\mathcal{N}^u$ is uniformly expanding, and $\mathcal{N}^s$ is uniformly contracting. In conclusion, $\mathcal{N}^u \oplus\mathcal{N}^c  \oplus \mathcal{N}^s$ is partially hyperbolic.
\end{remark}

\end{subsection}

\end{section}

\bigskip

\begin{section}{Shade properties: topological stability, shadowing, weak shadowing, expansiveness, specification, transitivity and ergodicity}

In this section we describe the dynamical properties that we shall deal with in the sequel.

\begin{subsection}{Topological stability}\label{topsta}
Let $(H,e, \mathcal{E}_{H,e})$ and $(\tilde{H},\tilde{e}, \mathcal{E}_{\tilde{H},\tilde{e}})$ be Hamiltonians systems; we say that $(\tilde{H},\tilde{e}, \mathcal{E}_{\tilde{H},\tilde{e}})$ is {\em semiconjugated} to $(H,e, \mathcal{E}_{H,e})$ if there exist a continuous and onto map $h: \mathcal{E}_{\tilde{H},\tilde{e}} \rightarrow \mathcal{E}_{H,e}$ and a continuous real map $\tau: \mathcal{E}_{\tilde{H},\tilde{e}}  \times \Rr \rightarrow \Rr$ such that:

\smallskip
\begin{enumerate}
\item[$\mbox{(a)}$] for any $x \in \mathcal{E}_{\tilde{H},\tilde{e}}$, $\tau_x: \Rr \rightarrow \Rr$ is an orientation preserving homeomorphism
where $\tau(x,0)=0$ and

\item[$\mbox{(b)}$] for all $x \in \mathcal{E}_{\tilde{H},\tilde{e}}$ and $t \in \Rr$ we have $h(X_{\tilde{H}}^t(x))=X_{H}^{\tau(x,t)}(h(x)).$
\end{enumerate}

\smallskip

 We say that the Hamiltonian system $(H,e, \mathcal{E}_{H,e})$ is {\em topologically stable}
if for any $\epsilon>0$, there exists
$\delta>0$ such that for any Hamiltonian system $(\tilde{H},\tilde{e}, \mathcal{E}_{\tilde{H},\tilde{e}})$
such that $\tilde{H}$ is $\delta$-$C^1$-close to $H$ and $\tilde{e} \in (e-\delta,e+\delta)$,
there exists a semiconjugacy from  $(\tilde{H},\tilde{e}, \mathcal{E}_{\tilde{H},\tilde{e}})$ to $(H,e, \mathcal{E}_{H,e})$,
i.e., there exists  $h: \mathcal{E}_{\tilde{H},\tilde{e}} \rightarrow \mathcal{E}_{H,e}$ and
$\tau: \mathcal{E}_{\tilde{H},\tilde{e}}  \times \Rr \rightarrow \Rr$ satisfying $\mbox{(a)}$ and $\mbox{(b)}$ above, and
$d(h(x),x) < \epsilon$ for all $x \in \mathcal{E}_{\tilde{H},\tilde{e}}$.
Observe that the notion of topologically stability does not define an equivalence relation.
Furthermore, the set of systems semi-conjugated to a given Hamiltonian system may not be an open set.
This motivates the following definition.
We say that $(H,e, \mathcal{E}_{H,e})$ is {\em robustly topologically stable}
if
there exists a neighbourhood $\mathcal{V}$ of $(H,e, \mathcal{E}_{H,e})$ such that any $(\tilde{H},\tilde{e}, \mathcal{E}_{\tilde{H},\tilde{e}}) \in \mathcal{V}$ is topologically stable.

\end{subsection}

\begin{subsection}{The shadowing property}
Let $(H,e, \mathcal{E}_{H,e})$ be a Hamiltonian system. Let us fix real numbers $\delta, T>0$. We say that a pair of
sequences $((x_i), (t_i))_{i \in \Z}$ $(x_i \in \mathcal{E}_{H,e}, t_i \in \R, t_i \geq T)$ is a
$(\delta,T)$-{\em pseudo-orbit} of $H$ if $d(X_H^{t_i}(x_i),x_{i+1}) < \delta$ for all $i \in \Z$.
For the sequence $(t_i)_{i \in \Z}$ we write
$S(n)=t_0+t_1+\ldots+t_{n-1}$ if $n \geq 0$, and
$S(n)=-(t_n+\ldots+t_{-2}+t_{-1})$ if $n < 0$, where $S(0)=t_0+t_{-1}=0.$
Let $x_0 \star t$ denote a point on a $(\delta,T)$-chain $t$ units time from $x_0$.
More precisely, for $t \in \R$,
$$x_0 \star t=X_H^{t-S(i)}(x_i) \hspace{0.4cm} {\text{\rm if}} \hspace{0.4cm} S(i) \leq t <S(i+1).$$
Simple examples show that, in the case of a flow, it is unnatural to require in the definition of shadowing the closeness
of points of a pseudo-orbit and its shadowing exact orbit corresponding to the same instants of time, as it is posed in the shadowing problem for diffeomorphisms. We need to reparametrize the exact shadowing orbit. By $\mbox{Rep}$ we denote the set of all increasing homemorphisms $\alpha: \R \rightarrow \R$,
called {\em reparametrizations}, satisfying $\alpha(0)=0$. Fixing $\epsilon>0$, we define the set
$\mbox{Rep}(\epsilon)=\left\{\alpha \in \mbox{Rep}: \left|\frac{\alpha(t)}{t}-1 \right|<\epsilon, \, t \in \R \right\}.$
When we choose a reparametrization $\alpha$ in the previous set, we want $\alpha$ to be taken arbitrarily close to identity.
A $(\delta,T)$-pseudo-orbit $((x_i), (t_i))_{i \in \Z}$ is $\epsilon$-{\em shadowed} by some orbit of $H$ if there is $z \in \mathcal{E}_{H,e}$ and a reparametrization $\alpha \in \mbox{Rep}(\epsilon)$ such that $d(X_H^{\alpha(t)}(z),x_0 \star t)<\epsilon$, for every $t \in \R$.

The Hamiltonian system $(H,e, \mathcal{E}_{H,e})$ is said to have the {\em shadowing property} if, for any $\epsilon>0$ there exist $\delta, T>0$ such that
any $(\delta,T)$-pseudo-orbit $((x_i), (t_i))_{i \in \Z}$ is $\epsilon$-{shadowed} by some orbit of $H$.

We say that the Hamiltonian system $(H,e, \mathcal{E}_{H,e})$ is {\em stably shadowable} if
there exists a neighbourhood $\mathcal{V}$ of $(H,e, \mathcal{E}_{H,e})$ such that any $(\tilde{H},\tilde{e}, \mathcal{E}_{\tilde{H},\tilde{e}}) \in \mathcal{V}$ has the shadowing property.

\end{subsection}

\begin{subsection}{The weak shadowing property}
We recall the following definition of weakly shadowable systems that was introduced in \cite{CP} in connection with the problem of genericity of shadowing. Given a Hamiltonian system  $(H,e, \mathcal{E}_{H,e})$ and $\delta, T>0$, a
$(\delta,T)$-pseudo-orbit $((x_i), (t_i))_{i \in \Z}$ is {\em weakly} $\epsilon$-{\em shadowed} by some orbit of $H$
if there exists $z \in \mathcal{E}_{H,e}$ such that $\{x_i\}_{i \in \Z} \subset B_\epsilon(\mathcal{O}(z))$.

The Hamiltonian system $(H,e, \mathcal{E}_{H,e})$ is said to have the {\em weak shadowing property} if, for any $\epsilon>0$ there exist $\delta, T>0$ such that
any $(\delta,T)$-pseudo-orbit $((x_i), (t_i))_{i \in \Z}$ is weakly $\epsilon$-shadowed by some orbit of $H$.

We say that the Hamiltonian system $(H,e, \mathcal{E}_{H,e})$ is {\em stably weakly shadowable} if
there exists a neighbourhood $\mathcal{V}$ of $(H,e, \mathcal{E}_{H,e})$ such that any $(\tilde{H},\tilde{e}, \mathcal{E}_{\tilde{H},\tilde{e}}) \in \mathcal{V}$ has the weak shadowing property.

\end{subsection}

\begin{subsection}{The expansiveness property}
Let $(H,e, \mathcal{E}_{H,e})$ be a Hamiltonian system. We say that $(H,e, \mathcal{E}_{H,e})$ is {\em expansive} if, for any $\epsilon>0$, there
is $\delta>0$ such that if $x,y \in \mathcal{E}_{H,e}$ satisfy $d(X^t_H(x),X^{\alpha(t)}_H(y)) \leq \delta$, for any $t \in \R$
and for some continuous map $\alpha: \R \rightarrow \R$ with $\alpha(0)=0$, then
$y=X^s_H(x)$, where $|s| \leq \epsilon$.

This definition asserts that any two points whose orbits remain indistinguishable, up to any continuous time displacement,
must be in the same orbit.

Observe that the reparametrization $\alpha$ is not assumed to be close to identity and that the expansiveness property does not
depend on the choice of the metric on $M$.

We say that the Hamiltonian system $(H,e, \mathcal{E}_{H,e})$ is {\em stably expansive} if
there exists a neighbourhood $\mathcal{V}$ of $(H,e, \mathcal{E}_{H,e})$ such that any $(\tilde{H},\tilde{e}, \mathcal{E}_{\tilde{H},\tilde{e}}) \in \mathcal{V}$ has the expansiveness property.

The next example, point to us by Pedro Duarte, shows that expansiveness can coexist with elliptic orbits.

\begin{example}\label{pedro}
Consider the Hamiltonian with 1-degree of freedom given by $H(x,y)=x^3-3xy^2$. The associated Hamiltonian vector field if $X_H(x,y)=(-6xy,3y^2-3x^2)$ (for more details see ~\cite[Appendix 7]{Arn}). The origin is a degenerated singularity of the vector field. However, our system exhibits a symmetry; the rotation by $\frac{2\pi}{3}$ centered in $(0,0)$ keeps invariant the phase portrait (see Figure 1). Hence, we can keep the same dynamics as the one in the figure and turn the origin into an elliptic fixed point inducing a rotation of angle $\frac{2\pi}{3}$. Observe that, locally, the system is expansive despite the fact that we are in the presence of an elliptic point.
\end{example}

\begin{figure}[h!]
  \centering
     \includegraphics[height=5cm]{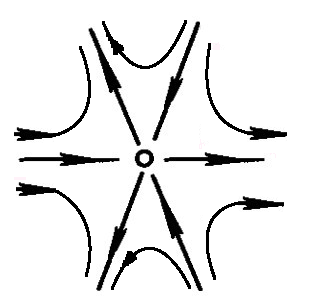}

\caption{\sffamily The dynamics near the origin of $X_H$.}
\end{figure}

\end{subsection}

\begin{subsection}{The specification property}\label{spec}
Let $(H,e, \mathcal{E}_{H,e})$ be a Hamiltonian system and $\Lambda$ be a $X_H^t$-invariant compact subset of $\mathcal{E}_{H,e}$.
A {\em specification} $S=(\tau,P)$ consists of a finite collection
$\tau=\{I_1,\ldots,I_m\}$ of bounded disjoint intervals $I_i=[a_i,b_i]$
of the real line and a map $P:\bigcup_{I_i \in \tau}I_i \rightarrow \Lambda$ such that for any $t_1,t_2 \in I_i$ we have
$$X^{t_2}_H(P(t_1))=X^{t_1}_H(P(t_2)).$$
$S$ is said to be $K$-{\em spaced} if $a_{i+1} \geq b_i+K$ for all $i \in \{1,\cdots,m\}$ and the minimal such
$K$ is called the spacing of $S$. If $\tau=\{I_1,I_2\}$ then $S$ is said to be a {\em weak specification}.
Given $\epsilon>0$, we say that $S$ is $\epsilon$-{\em shadowed} by $x \in \Lambda$ if
$d(X^t_H(x),P(t))<\epsilon$ for all $t \in \bigcup_{I_i \in \tau}I_i$.

We say that $\Lambda$ has the {\em weak specification property} if for any $\epsilon>0$
there exists a $K=K(\epsilon) \in \Rr$ such that any $K$-spaced weak specification $S$ is
$\epsilon$-shadowed by a point of $\Lambda$. In this case the Hamiltonian system $(H,e, \mathcal{E}_{{H,e}{|_{\Lambda}}})$
is said to have the weak specification property.
We say that the Hamiltonian system $(H,e, \mathcal{E}_{H,e})$ has the weak specification property if $\mathcal{E}_{H,e}$
has it.

We say that the Hamiltonian system $(H,e, \mathcal{E}_{H,e})$ has the {\em stable weak specification property} if
there exists a neighbourhood $\mathcal{V}$ of $(H,e, \mathcal{E}_{H,e})$ such that any $(\tilde{H},\tilde{e}, \mathcal{E}_{\tilde{H},\tilde{e}}) \in \mathcal{V}$ has the weak specification property.

\end{subsection}

\begin{subsection}{The transitive and ergodic properties}

A compact energy hypersurface $\mathcal{E}_{H,e}$ is \textit{topologically transitive} if,
for any open and non-empty subsets $U$ and $V$ of $\mathcal{E}_{H,e}$, there is $\tau\in\mathbb{R}$ such that $X_H^{\tau}(U)\cap V\neq\emptyset$.

We say that the Hamiltonian system $(H,e, \mathcal{E}_{H,e})$ is {\em robustly transitive} if
there exists a neighbourhood $\mathcal{V}$ of $(H,e, \mathcal{E}_{H,e})$ such that, for any $(\tilde{H},\tilde{e}, \mathcal{E}_{\tilde{H},\tilde{e}}) \in \mathcal{V}$, the regular energy hypersurface $\mathcal{E}_{\tilde{H},\tilde{e}}$ is transitive.

The probability measure $\mu_{\mathcal{E}_{H,e}}$ is \textit{ergodic} if, for any $X_H^t$-invariant subset $C$ of $\mathcal{E}_{H,e}$, we have that $\mu_{\mathcal{E}_{H,e}}(C)=0$ or $\mu_{\mathcal{E}_{H,e}}(C)=1$.
It is well-known that if a Hamiltonian system $(H,e,\mathcal{E}_{H,e})$ is such that $\mu_{\mathcal{E}_{H,e}}$ is \textit{ergodic} then $\mathcal{E}_{H,e}$ is transitive.

We say that the Hamiltonian system $(H,e, \mathcal{E}_{H,e})$ with $H \in C^3(M,\R)$, is {\em stably ergodic} if
there exists a neighbourhood $\mathcal{V}$ of $(H,e, \mathcal{E}_{H,e})$ such that, for any $(\tilde{H},\tilde{e}, \mathcal{E}_{\tilde{H},\tilde{e}}) \in \mathcal{V}$ with $\tilde{H} \in C^3(M,\R)$, the probability measure $\mu_{\mathcal{E}_{\tilde{H},\tilde{e}}}$ is ergodic.

\end{subsection}

\end{section}

\bigskip

\begin{section}{Precise statement of the results}

A Hamiltonian system $(H,e, \mathcal{E}_{H,e})$ is {\em Anosov} if
$\mathcal{E}_{H,e}$ is uniformly hyperbolic for the Hamiltonian flow
$X_H^t$ associated to $H$.

Our first main result states that the stability of any of the properties: topological stability, shadowing, expansiveness, and specification
guarantees global hyperbolicity:

\begin{maintheorem}\label{mainth1}
Let $(H,e, \mathcal{E}_{H,e})$ be a Hamiltonian system. If any of the following statements hold:
\begin{enumerate}
 \item[\mbox{(1)}] $(H,e, \mathcal{E}_{H,e})$ is robustly topologically stable;
 \item[\mbox{(2)}] $(H,e, \mathcal{E}_{H,e})$ is stably shadowable;
 \item[\mbox{(3)}] $(H,e, \mathcal{E}_{H,e})$ is stably expansive;
 \item[\mbox{(4)}] $(H,e, \mathcal{E}_{H,e})$ has the stable weak specification property,
\end{enumerate}
then $(H,e, \mathcal{E}_{H,e})$ is Anosov.
\end{maintheorem}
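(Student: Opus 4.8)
The plan is to reduce all four hypotheses to a single robust property of the closed orbits, namely the \emph{star} property, and then invoke a known rigidity result. Recall that $(H,e,\mathcal{E}_{H,e})$ is a \emph{star} system if there is a neighbourhood of it in which every system has all of its closed orbits hyperbolic in the sense of Definition~\ref{Phd}. The first ingredient is that a star Hamiltonian is automatically Anosov: this was established in \cite{BFR} for $n=2$ and by the authors in \cite{BRT} for $n\geq 2$ (see Theorem~\ref{thm2}). Consequently, to prove the main theorem it is enough to show that each of the hypotheses (1)--(4) forces $(H,e,\mathcal{E}_{H,e})$ to be a star system; the theorem then follows at once by composing this implication with the star$\,\Rightarrow\,$Anosov result. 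This reduction is exactly the content of Theorem~\ref{thm3}, which I would prove as follows.

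I would argue by contraposition, assuming $(H,e,\mathcal{E}_{H,e})$ is \emph{not} star. Then every neighbourhood $\mathcal{V}$ of the system contains a Hamiltonian carrying a non-hyperbolic closed orbit $\gamma$, so the transversal linear Poincar\'e return map $\Phi_H^{t_0}$ along $\gamma$ has an eigenvalue of modulus one. By the symplectic eigenvalue theorem (Theorem~\ref{sympeigen}) these eigenvalues occur in the reciprocal and conjugate configurations imposed by the symplectic structure; hence, after one further arbitrarily small $C^2$ perturbation furnished by the Hamiltonian perturbation lemmas of \S\ref{hamilperturbsection}, I would move a pair of non-real eigenvalues exactly onto the unit circle, turning $\gamma$ into an \emph{elliptic} closed orbit with a two-dimensional $\Phi_H^t$-invariant subbundle of $\mathcal{N}$ on which the transversal flow acts as a rotation. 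A Birkhoff normal-form argument applied to the return map then upgrades this, by a last small perturbation, to genuinely rotational local dynamics: a small invariant region foliated by distinct invariant closed curves. All of these perturbations stay within the admissible $C^2$ Hamiltonian class and keep the analytic continuation of the energy hypersurface intact, so the perturbed systems still belong to $\mathcal{V}$.

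It then remains to contradict, one property at a time, the assumption that the relevant shade property holds \emph{throughout} $\mathcal{V}$. For \emph{shadowing} the foliated rotational region yields $(\delta,T)$-pseudo-orbits that drift transversally across the invariant curves and therefore cannot be $\epsilon$-shadowed by any single genuine orbit. For \emph{topological stability} a small twist perturbation destroys these invariant curves, producing dynamics that no semiconjugacy close to the identity can reproduce. For \emph{expansiveness} one must be careful, since Example~\ref{pedro} shows that an isolated elliptic point can coexist with expansiveness; the resolution is that the genuinely rotational region built above carries a whole family of mutually distinct orbits that remain within $\delta$ of one another for all time up to a continuous reparametrisation, which is precisely what expansiveness forbids.

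The main obstacle is twofold. First, the whole perturbative scheme must be carried out inside the symplectic, $C^2$ Hamiltonian category: unlike the dissipative case one cannot prescribe the return differential freely, and the constraints of Theorem~\ref{sympeigen} must be honoured at every step. Second, and most delicate, is the \emph{weak specification} case. The classical route to contradicting stable specification creates hyperbolic closed orbits of \emph{different indices}, but in the symplectic setting every hyperbolic closed orbit has index $n-1$, so no index change is available. Here I would instead exploit the elliptic orbit directly, building, for arbitrarily large spacing $K$, $K$-spaced weak specifications formed from two orbit segments threading the rotational region that no single orbit can simultaneously $\epsilon$-shadow. Making this purely symplectic obstruction work uniformly over $\mathcal{V}$ is the crux of the argument.
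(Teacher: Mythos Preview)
Your global architecture is exactly the paper's: reduce each hypothesis to the star property (Theorem~\ref{thm3}) and then invoke Theorem~\ref{thm2}. For cases (1)--(3) your scheme is also close in spirit, though the paper does not pass through a Birkhoff normal form. Instead it proves a single ``explosion of periodic orbits'' lemma (Lemma~\ref{continuum}): starting from a non-hyperbolic closed orbit one smooths via the pasting lemma, uses Franks' lemma if necessary to restore non-hyperbolicity, applies the weak pasting lemma for symplectomorphisms to make the Poincar\'e map \emph{literally linear} near $q$, realises this by the Hamiltonian suspension theorem, and finally perturbs the unit-circle eigenvalues to roots of unity. The upshot is that an entire neighbourhood of $q$ in $\Sigma^c_q$ consists of genuine periodic points. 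This is sharper than invariant curves coming from a normal form and makes the contradictions for shadowing, expansiveness and topological stability immediate and uniform.

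Your treatment of case (4) diverges substantially from the paper, and here there is a real gap. You propose to contradict weak specification directly inside the rotational region, but you yourself flag the uniformity problem, and when the orbit is only $1$-elliptic the remaining hyperbolic normal directions let candidate shadowing orbits leave the centre manifold during the $K$-gap, so the obstruction is not obviously robust. The paper bypasses this completely with a transversality argument. First, weak specification forces $W^u(\mathcal{O}(p))\cap W^s(\mathcal{O}(q))\neq\emptyset$ for any two periodic points with nontrivial hyperbolic spectrum (Lemma~\ref{wuwsnotempty}). For a Kupka--Smale system such intersections are transverse; but if $p$ is $k$-elliptic then $\dim W^u(\mathcal{O}(p))+\dim W^s(\mathcal{O}(q))<2n-2$, so transversality forces the intersection to be empty --- a contradiction (Lemma~\ref{hypnonhyp}). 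It then suffices to produce, near any non-star system, a Kupka--Smale Hamiltonian carrying a $1$-elliptic closed orbit, which follows from the Newhouse dichotomy together with Robinson's genericity (Lemma~\ref{keySP}). This dimension-count obstruction is the genuinely symplectic substitute for the index-change trick you correctly note is unavailable, and it is the key idea missing from your proposal.
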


It is well-known from classical hyperbolic dynamics that Anosov implies shadowing, expansiveness and topological stability. With respect to weak specification the issue is more subtle. For instance, mixing Anosov  flows satisfy the specification property. However, we consider the following example:

\begin{example}\label{Mix} (Non-mixing Anosov suspension flow) Let be given an Anosov diffeomorphism in a surface $\Sigma$,
 $f\colon \Sigma\rightarrow{\Sigma}$, and a \emph{ceiling function}
$h\colon \Sigma\rightarrow{\mathbb{R}^{+}}$ satisfying $h(x)\geq{\beta}>0$
for all $x\in{\Sigma}$.
We consider the space $M_{h}\subseteq{\Sigma\times{\mathbb{R}^+}}$ defined by
$$
M_h=\{(x,t) \in \Sigma\times{\mathbb{R}^+}: 0 \leq t \leq h(x) \}
$$
with the identification between the pairs $(x,h(x))$ and $(f(x),0)$. The flow defined
on $M_h$ by $S^s(x,r)=(f^{n}(x),r+s-\sum_{i=0}^{n-1}h(f^{i}(x)))$
is an \emph{Anosov suspension flow},
where $n\in{\mathbb{N}_0}$, is uniquely defined by the condition
$$
\sum_{i=0}^{n-1}h(f^{i}(x))\leq{r+s}<\sum_{i=0}^{n}h(f^{i}(x)).
$$
If we choose $f(x)=1$, then the suspension flow cannot be topologically mi\-xing. To see this just observe that the integer iterates of $\Sigma\times(0,1/2)$ are disjoint from $\Sigma\times(1/2,1)$.
\bigskip

\end{example}

A Hamiltonian system $(H,e, \mathcal{E}_{H,e})$ is a {\em Hamiltonian star system} if there exists
a neighbourhood $\mathcal{V}$ of $(H,e, \mathcal{E}_{H,e})$
such that, for any $(\tilde{H},\tilde{e}, \mathcal{E}_{\tilde{H},\tilde{e}})\in\mathcal{V}$, the correspondent regular energy hypersurface $\mathcal{E}_{\tilde{H},\tilde{e}}$ has all the closed orbits hyperbolic.

The next result was proved in~\cite{BFR}, for $n=2$, and recently generalized by the authors in~\cite{BRT}, for $n \geq 2$.

\begin{maintheorem}\label{thm2}
If $(H,e, \mathcal{E}_{H,e})$ is a Hamiltonian star system, then  $(H,e, \mathcal{E}_{H,e})$ is Anosov.
\end{maintheorem}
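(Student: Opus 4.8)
The plan is to work entirely with the transversal linear Poincar\'e flow $\Phi_H^t$ and to reduce the Anosov property of $\mathcal{E}_{H,e}$ to uniform hyperbolicity of $\Phi_H^t$ on the whole hypersurface, as permitted by Lemma~\ref{hyperbolictranspoinc}. The star hypothesis is a robust statement about periodic orbits, so the first task is to convert it into \emph{uniform} hyperbolic estimates along all closed orbits. I would argue by contradiction: if the hyperbolicity of the periodic orbits degenerated, that is, if there were closed orbits along which some eigenvalue of the transversal Poincar\'e map approached the unit circle, then a $C^2$-small, symplectic, energy-respecting perturbation of $H$ (a Hamiltonian Franks-type lemma, of the kind collected in \S\ref{hamilperturbsection}) would let me realize a prescribed perturbation of the cocycle along such an orbit and push an eigenvalue exactly onto the unit circle. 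By the symplectic eigenvalue theorem (Theorem~\ref{sympeigen}) eigenvalues come in reciprocal and conjugate quadruples, so this produces a non-hyperbolic (in fact elliptic) closed orbit inside the star neighbourhood, contradicting the hypothesis. This yields constants $\ell>0$ and $\theta\in(0,1)$, uniform over all periodic points.

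With uniform estimates in hand, the second step is to produce a dominated splitting over the closure $\overline{\Per}$ of the periodic orbits. Because the symplectic setting forces the stable index to be constant and equal to $n-1$, there is no room for the index-change phenomena that drive the dissipative arguments; the uniform hyperbolic rates together with uniform angles between the stable and unstable directions along periodic orbits give a uniform dominated splitting in the sense of Definition~\ref{DDPoinc}, which then extends to $\overline{\Per}$ by compactness and continuity of the involved bundles.

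Third, I would upgrade this to a splitting over all of $\mathcal{E}_{H,e}$. Here the conservative nature of the system is essential: by the Poincar\'e Recurrence Theorem, $\mu_{\mathcal{E}_{H,e}}$-almost every point is recurrent, so every point of $\mathcal{E}_{H,e}$ is non-wandering, and a conservative (Pugh--Robinson type) closing lemma, applied inside the open star neighbourhood, makes the periodic orbits dense in $\mathcal{E}_{H,e}$. Since the domination estimates are uniform and the star neighbourhood is open, the dominated splitting of the previous step persists under this perturbation and extends to the whole energy hypersurface.

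Finally, I would rule out a nontrivial center bundle. By Remark~\ref{Mane}, in the symplectic world the dominated splitting already yields a partially hyperbolic decomposition $\mathcal{N}^u\oplus\mathcal{N}^c\oplus\mathcal{N}^s$ with $\mathcal{N}^u$ uniformly expanding and $\mathcal{N}^s$ uniformly contracting, so it only remains to show $\mathcal{N}^c=\{0\}$. If $\mathcal{N}^c$ were nontrivial, its eigenvalues along periodic orbits would lie on the unit circle, and one further Hamiltonian Franks perturbation would create an elliptic closed orbit, again contradicting the star property. Hence $\mathcal{N}=\mathcal{N}^s\oplus\mathcal{N}^u$ is uniformly hyperbolic (Definition~\ref{Phd}), and by Lemma~\ref{hyperbolictranspoinc} the system $(H,e,\mathcal{E}_{H,e})$ is Anosov. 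I expect the main obstacle to be the first step: constructing the symplectic Franks-type perturbation that realizes a prescribed change of the transversal linear Poincar\'e cocycle along a closed orbit while remaining a genuine $C^2$-Hamiltonian perturbation, staying on a nearby energy level, and respecting the symplectic constraint. Once that tool is available, the remainder is a fairly standard domination-plus-density argument, specialized by the constant index $n-1$.
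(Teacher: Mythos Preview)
The paper does not actually prove Theorem~\ref{thm2}; it merely states it and refers to \cite{BFR} (for $n=2$) and \cite{BRT} (for $n\geq 2$) for the proof. So there is no ``paper's own proof'' here to compare against in detail.

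That said, your outline is essentially the strategy carried out in those references. A few comments:

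Your step~4 is redundant once steps~1--3 are done properly. The uniform hyperbolicity of periodic orbits forces $\dim\mathcal{N}^s=\dim\mathcal{N}^u=n-1$ along closed orbits; this splitting extends by domination to the closure; and then Remark~\ref{Mane} applied to an $(n-1,n-1)$ dominated splitting leaves no room for a center bundle, since it forces $\dim\mathcal{N}^s=\dim\mathcal{N}^u$ in the resulting $\mathcal{N}^u\oplus\mathcal{N}^c\oplus\mathcal{N}^s$. So hyperbolicity comes for free from the dimension count, without a second Franks perturbation.

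In step~3 you should be more careful. The closing lemma gives density of periodic orbits only \emph{generically}, not for your specific $H$. The clean way around this is: the star neighbourhood is open, so pick a $C^2$-generic $\tilde H$ inside it (for which closed orbits are dense in $\mathcal{E}_{\tilde H,\tilde e}$ by Pugh--Robinson); steps~1--2 then make $\mathcal{E}_{\tilde H,\tilde e}$ Anosov; since Anosov is an open condition and $\tilde H$ can be taken arbitrarily $C^2$-close to $H$, the original $\mathcal{E}_{H,e}$ is Anosov. Your phrasing ``the dominated splitting \ldots\ persists under this perturbation and extends'' conflates two different objects and should be sharpened along these lines.

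Finally, the ``main obstacle'' you anticipate---the Hamiltonian Franks lemma---is exactly Theorem~\ref{Frank} (due to Vivier), so that tool is already available and need not be reconstructed.
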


Thus, Theorem~\ref{mainth1} is a direct consequence of Theorem~\ref{thm2} and Theorem~\ref{thm3}.

\begin{maintheorem}\label{thm3}
Let $(H,e, \mathcal{E}_{H,e})$ be a Hamiltonian system. If any of the following statements hold:
\begin{enumerate}
 \item[\mbox{(a)}] $(H,e, \mathcal{E}_{H,e})$ is robustly topologically stable;
 \item[\mbox{(b)}] $(H,e, \mathcal{E}_{H,e})$ is stably shadowable;
 \item[\mbox{(c)}] $(H,e, \mathcal{E}_{H,e})$ is stably expansive;
 \item[\mbox{(d)}] $(H,e, \mathcal{E}_{H,e})$ has the stable weak specification property,
\end{enumerate}
then $(H,e, \mathcal{E}_{H,e})$ is a Hamiltonian star system.
\end{maintheorem}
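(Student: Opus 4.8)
The plan is to show that each of the four stable hypotheses forces every closed orbit of every system in the relevant neighbourhood to be hyperbolic, which is precisely the star condition. So I fix the neighbourhood $\mathcal{V}$ on which the property in (a)--(d) holds and suppose, for a contradiction, that some $(\hat H,\hat e,\mathcal{E}_{\hat H,\hat e})\in\mathcal{V}$ carries a non-hyperbolic closed orbit $\gamma$. By Lemma~\ref{hyperbolictranspoinc} the failure of hyperbolicity is detected on the transversal linear Poincar\'e flow $\Phi_{\hat H}^t$ along $\gamma$, and by the Symplectic Eigenvalue Theorem (Theorem~\ref{sympeigen}) the eigenvalues of the return map come in quadruples $\{\sigma,1/\sigma,\bar\sigma,1/\bar\sigma\}$, so non-hyperbolicity forces a conjugate pair onto the unit circle. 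Using the Hamiltonian Franks-type perturbation results of \S\ref{hamilperturbsection}, I would first perturb inside $\mathcal{V}$ so that this pair becomes a genuinely complex pair $e^{\pm i\theta}$ with the remaining eigenvalues hyperbolic, making $\gamma$ a $1$-elliptic orbit. A second application lets me prescribe $\theta$: rational, to force near $\gamma$ a second closed orbit $\gamma'$; or irrational, to keep $\gamma$ locally isolated among closed orbits, with a punctured neighbourhood free of them. Every such modification stays inside $\mathcal{V}$, so any failure exhibited is a genuine contradiction.

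For stable expansiveness I use the rational realization: the two distinct closed orbits $\gamma,\gamma'$ are $C^0$-close as subsets of $\mathcal{E}_{\hat H,\hat e}$, so for $x\in\gamma$, $y\in\gamma'$ and a suitable increasing reparametrization $\alpha$ with $\alpha(0)=0$ the quantity $d(X^t_{\hat H}(x),X^{\alpha(t)}_{\hat H}(y))$ stays uniformly small for all $t\in\R$ while $y\notin\mathcal{O}(x)$, negating expansiveness. For stable shadowing I exploit the elliptic central direction $\mathcal{N}^c$ of $\gamma$, which is neither contracting nor expanding: at small scale the dynamics on $\mathcal{N}^c$ is, up to higher order, a rigid rotation preserving the radial coordinate, so I can concatenate short genuine orbit arcs into a $(\delta,T)$-pseudo-orbit whose radial coordinate increases by a fixed amount at each switch and thus drifts a macroscopic distance, whereas any true orbit changes its radius by an arbitrarily small amount over the relevant window. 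Such a pseudo-orbit admits no reparametrized $\epsilon$-shadowing orbit, contradicting shadowing.

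For robust topological stability I use the irrational realization, so the base system $\hat H\in\mathcal{V}$ has an isolated $1$-elliptic orbit $\gamma$ with no other closed orbit in a punctured neighbourhood; a further small perturbation then plants a closed orbit $\gamma'$ inside that neighbourhood. Since $\hat H$ is topologically stable, the perturbed system is semiconjugate to $\hat H$ by some $h$ with $d(h(x),x)<\epsilon$ for all $x$, and equivariance (b) forces $h$ to carry $\gamma'$ onto a closed orbit of $\hat H$ lying within $\epsilon$ of $\gamma'$, i.e. inside the forbidden region, a contradiction. The weak specification case is where the constancy of the symplectic index (always $n-1$, so the change-of-index mechanism of the dissipative proofs in \cite{ASS,SSY} is unavailable) demands a genuinely symplectic argument: keeping $\gamma$ elliptic, I would build a $K(\epsilon)$-spaced weak specification whose two pieces follow $\gamma$ at two different phases and radii of its central rotation, and use the rigidity of that rotation on $\mathcal{N}^c$ to block any single orbit from $\epsilon$-shadowing both pieces within the allotted switching window, so weak specification fails.

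The crux, and the step I expect to be hardest, is the perturbative creation of this controlled elliptic behaviour entirely within the $C^2$-topology and inside the fixed $\mathcal{V}$: the results of \S\ref{hamilperturbsection} control only the transversal linear Poincar\'e flow along $\gamma$, so converting a unit-circle eigenvalue into a prescribed rotation, and then manufacturing the auxiliary orbit $\gamma'$ or the radial-drift pseudo-orbit, must be carried out while preserving the energy level and the analytic continuation of $\mathcal{E}_{\hat H,\hat e}$. The second delicate point is quantitative robustness: the non-expansiveness, the non-shadowable pseudo-orbit and the obstructed weak specification must all survive the $o(1)$ errors from the nonlinear part of the flow near $\gamma$, which pins the constructions to a definite scale set by the perturbation size. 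Finally, making the specification contradiction rigorous without any index change, by extracting a concrete obstruction purely from the elliptic rotation of $\Phi_{\hat H}^t$ on $\mathcal{N}^c$, is the conceptually new ingredient and the most demanding part of the argument.
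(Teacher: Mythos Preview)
Your overall architecture for (a)--(c) is in the right spirit, but you are missing the key technical device that the paper uses to make all three cases clean: the \emph{explosion of periodic orbits} (Lemma~\ref{continuum}). Rather than working with a single $1$-elliptic orbit and fighting the nonlinear remainder, the paper first smooths $H$ (Pasting lemma), adjusts the eigenvalue onto the unit circle (Franks' lemma), and then replaces the Poincar\'e map near $q$ by its \emph{linear} part via the weak pasting lemma for symplectomorphisms and the Hamiltonian suspension (Theorem~\ref{Suspension}). After a further rationalization of the angle, this yields a Hamiltonian $H_1\in\mathcal{V}$ for which an entire disk in $\Sigma_q^c$ consists of genuine periodic points. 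With this in hand your worries about ``$o(1)$ errors from the nonlinear part'' simply evaporate: for (c) any two nearby points in $\Sigma_q^c$ violate expansiveness; for (b) the radial-drift pseudo-orbit works exactly because every candidate shadowing orbit is itself periodic in $\Sigma_q^c$ and hence cannot connect $q$ to a point at distance $3\xi/4$; for (a) one perturbs once more to make $q$ hyperbolic or isolated $k$-elliptic and reads off the contradiction from the semiconjugacy. Your versions---rational rotation $\Rightarrow$ a second closed orbit, irrational rotation $\Rightarrow$ isolation---are not justified as stated, because controlling only the transversal linear Poincar\'e flow does not by itself produce or exclude nearby closed orbits; you need the linearization of the actual Poincar\'e map, which is exactly what Lemma~\ref{continuum} supplies.

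For (d) your proposed direct obstruction is genuinely different from the paper and, as you suspect, is the weakest part of the sketch. The paper avoids building any explicit specification at all. Instead it shows (Lemma~\ref{wuwsnotempty}) that weak specification forces $W^u(\mathcal{O}(p))\cap W^s(\mathcal{O}(q))\neq\emptyset$ for any pair of periodic points with nontrivial hyperbolic part, and then argues by dimension: a non-hyperbolic (in particular $1$-elliptic) periodic point has $\dim W^u<n-1$, so $\dim W^u(\mathcal{O}(p))+\dim W^s(\mathcal{O}(q))<2n-2$ and transversality (Kupka--Smale, Lemma~\ref{hypnonhyp}) forces the intersection to be empty, a contradiction. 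The existence of such a Kupka--Smale system with a $1$-elliptic point inside $\mathcal{V}$ comes from Lemma~\ref{keySP}. This transversality/dimension argument is precisely the ``symplectic'' replacement for the change-of-index mechanism you correctly note is unavailable; your attempt to block shadowing of a two-piece specification directly from the elliptic rotation on $\mathcal{N}^c$ would again require eliminating nonlinear terms and controlling switching windows, and no such estimate is provided.
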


It is interesting to note that the specification property implies the topologically mixing property (see Lemma~\ref{mixing}). Moreover, we also note that recently it was proved (see~\cite{BFR2})
that $C^2$-generically Hamiltonian systems are topologically mixing. Clearly, topologically mixing implies transitivity, thus
$C^2$-stable weak specification implies $C^2$-stable transitivity.

Given $1 \leq k \leq n-1$, we recall that a $k$-{\em elliptic closed orbits} has
$2k$ simple non-real eigenvalues of the transversal linear Poincar\'e flow (see Definition \ref{TLPF}) at the period of norm one, and its remaining eigenvalues of norm different from one.
In particular, when $k=n-1$, \emph{(totally) elliptic closed orbits} have all eigenvalues  at the period of norm one, simple and non-real.

Partial hyperbolicity guarantees the decomposition of the normal bundle at energy levels into three invariant subbundles such that, the dynamics is uniformly expanding in one direction, uniformly contracting the other direction and central in the remaining direction (cf. Definition~\ref{PH}). If the central subbundle is trivial the system is Anosov.

A regular energy hypersurface is \emph{far from partially hyperbolic} if it is not in the closure (w.r.t. the $C^2$-topology) of partially hyperbolic surfaces.
Notice that, by structural stability, the union of partially hyperbolic energies is open (\cite{BFR}). Moreover, partially hyperbolic hypersurfaces do not contain elliptic closed orbits. Now, we state the following main result:

\begin{maintheorem}\label{thm4}
Given an open subset $U\subset M$, if a $C^2$-Hamiltonian has a regular energy hypersurface, far from partially hyperbolic, intersecting $U$, then it can be $C^{2}$-approximated by a $C^\infty$-Hamiltonian having a closed elliptic orbit intersecting $U$.
\end{maintheorem}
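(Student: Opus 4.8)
The plan is to argue by contraposition, reducing the production of an elliptic orbit to the \emph{failure} of a dominated splitting and then invoking the equivalence between dominated splitting and partial hyperbolicity in the symplectic category recorded in Remark~\ref{Mane}. Suppose the conclusion fails. Since $C^\infty$-Hamiltonians are $C^2$-dense, this means there is a $C^2$-neighbourhood $\mathcal{U}$ of $H$ such that no system in $\mathcal{U}$ possesses a closed elliptic orbit meeting $U$. Equivalently, for every $\tilde H\in\mathcal{U}$ and every periodic orbit $\gamma$ of the analytic continuation $\mathcal{E}_{\tilde H,\tilde e}$ with $\gamma\cap U\neq\emptyset$, the transversal linear Poincar\'e flow $\Phi_{\tilde H}^t$ (Definition~\ref{TLPF}) has, at the period, no non-real eigenvalue of modulus one. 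My goal is to show that this robust absence of ellipticity forces a dominated splitting over $\mathcal{E}_{H,e}$, which by Remark~\ref{Mane} makes $\mathcal{E}_{H,e}$ partially hyperbolic, contradicting the hypothesis that it is far from partially hyperbolic.

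First I would extract the two structural inputs. From the hypothesis, together with Remark~\ref{Mane}, the flow $\Phi_H^t$ admits \emph{no} $\ell$-dominated splitting (Definition~\ref{DDPoinc}) over $\mathcal{E}_{H,e}$ for any $\ell$, and this persists robustly throughout $\mathcal{U}$. To place the dynamics inside $U$, I would use that the Liouville measure $\mu_{\mathcal{E}_{H,e}}$ has full support, so $U\cap\mathcal{E}_{H,e}$ has positive measure; by the Poincar\'e Recurrence Theorem almost every such point is recurrent, and a conservative closing lemma then yields genuine periodic orbits meeting $U$ that, moreover, shadow the derivative cocycle along the recurrent orbit. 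This produces the stock of periodic orbits through $U$ on which the perturbation machinery will operate.

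The heart of the proof is a Ma\~n\'e--Bochi--Viana perturbation dichotomy transcribed to the Hamiltonian, continuous-time setting. Using the Hamiltonian Franks-type perturbation lemmas, I would show that along any periodic orbit through $U$ whose transversal cocycle lacks a uniform $\ell$-domination, one can, by an arbitrarily $C^2$-small \emph{symplectic} perturbation supported in a tubular neighbourhood of the orbit and respecting the energy level, collapse the angle between the candidate stable and unstable directions inside a two-dimensional $\Phi$-invariant symplectic block and rotate the corresponding conjugate pair of eigenvalues onto the unit circle. Since Theorem~\ref{sympeigen} constrains eigenvalues to move in quadruples $\{\sigma,1/\sigma,\bar\sigma,1/\bar\sigma\}$, the rotation performed inside a single symplectic plane delivers a genuine non-real modulus-one pair, i.e.\ a $1$-elliptic closed orbit meeting $U$ --- contradicting the standing assumption. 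Hence every periodic orbit through $U$ must carry a splitting with a \emph{uniform} domination constant $\ell_0$, independent of the orbit and of $\tilde H\in\mathcal{U}$. By the closing lemma and the full support of $\mu_{\mathcal{E}_{H,e}}$ this family of orbits is dense in $\mathcal{E}_{H,e}$, so passing to the limit (compactness of the relevant Grassmannian bundle of $\mathcal{N}$ and continuity of $\Phi_H^t$) the uniform $\ell_0$-domination extends to a $\Phi_H^t$-invariant $\ell_0$-dominated splitting over all of $\mathcal{E}_{H,e}$, yielding the desired contradiction via Remark~\ref{Mane}.

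I expect the main obstacle to be the third step. The delicate points are: the perturbation must stay symplectic and fix the energy level, which is exactly why one works with the transversal symplectic cocycle $\Phi_H^t$ and a symplectic Franks lemma rather than with an unconstrained linear cocycle; the rotation must be engineered inside one two-dimensional symplectic subspace so that Theorem~\ref{sympeigen} produces a true non-real pair on the unit circle rather than a spurious real resonance; and one must control the total rotation accumulated over the (possibly very long) period so that the perturbation remains $C^2$-small --- this quantitative estimate is the technical core of adapting Saghin--Xia's discrete argument to flows. A closely related difficulty is the localization to $U$: guaranteeing that the non-dominated behaviour, which the global hypothesis only furnishes at \emph{some} point of $\mathcal{E}_{H,e}$, can actually be realized along a periodic orbit meeting $U$, and that the density/extension argument genuinely recovers a dominated splitting over the \emph{whole} hypersurface. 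This is what forces the combined use of the ergodic/conservative closing lemma applied to recurrent points inside $U$ together with the full support of the Liouville measure.
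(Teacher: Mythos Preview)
Your overall strategy (contraposition, absence of domination $\Rightarrow$ manufacture ellipticity, then invoke Remark~\ref{Mane}) matches the paper's, but there is a genuine gap in the core step. In this paper ``elliptic'' means \emph{totally} elliptic, i.e.\ $(n-1)$-elliptic: all $2n-2$ eigenvalues of the transversal Poincar\'e map are simple, non-real, and of modulus one. Consequently the assumption ``no elliptic closed orbit through $U$ for any nearby Hamiltonian'' does \emph{not} translate into ``no non-real eigenvalue of modulus one'', as you write in your first paragraph; it only says that at least one conjugate pair stays off $\mathbb{S}^1$. Your rotation inside a single two-dimensional symplectic block produces at best a $1$-elliptic orbit, and for $n\geq 3$ that does not contradict the standing assumption. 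The contradiction in your third paragraph therefore fails to close except in the four-dimensional case.

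The paper (following Saghin--Xia) does not attempt a single rotation. It works with Lyapunov exponents and \emph{iterates}: since by Remark~\ref{Mane} the hypersurface lacks a dominated splitting of \emph{every} index, Lemma~\ref{SXia} (Ma\~n\'e's principle implemented along homoclinic orbits of the dense homoclinic class, then closed up into genuine periodic orbits via Lemma~\ref{SXhamilt}) is applied repeatedly to decrease each $\Lambda_{i-1}$ by a definite amount, until via Lemma~\ref{SXia2} all exponents coincide and hence vanish by the symplectic pairing~\eqref{eq.lyap2}. Only then does one obtain a totally elliptic orbit and reach the contradiction. Note, too, that even for a single pair a $C^2$-small perturbation cannot move eigenvalues from well off the unit circle onto it in one stroke: the lack of $\ell$-domination only guarantees a small angle \emph{somewhere} along the orbit, and a small rotation there lowers the relevant exponent by a fixed increment --- so an inductive exponent-lowering scheme is already needed to reach the $1$-elliptic conclusion, let alone the totally elliptic one.
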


The previous result generalizes the result stated in \cite[Theorem 6.2]{N} and proved in ~\cite{MBJLD2}. As an almost direct consequence, we arrive at the Newhouse-Arnaud-Saghin-Xia dichotomy for $2n$-dimensional ($n\geq 2$) Hamiltonians (\cite{Ar2002,N,SX}). Recall that, for a $C^2$-generic Hamiltonian, all but finitely many points are regular.

\begin{maintheorem}\label{thm5}
For a $C^2$-generic Hamiltonian $H\in C^2(M,\mathbb{R})$ the union of the partially hyperbolic regular energy hypersurfaces and the closed elliptic orbits, forms a dense subset of $M$.
\end{maintheorem}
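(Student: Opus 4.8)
The plan is a Baire-category argument reducing the global genericity statement to the local perturbation result of Theorem~\ref{thm4}. Fix a countable basis $\{U_j\}_{j\in\N}$ for the topology of $M$. For a Hamiltonian $H$ write $\mathcal{PH}(H)$ for the union in $M$ of its partially hyperbolic regular energy hypersurfaces and $\mathcal{EL}(H)$ for the union of its closed elliptic orbits. Since $\{U_j\}$ is a basis, the set $\{H : \mathcal{PH}(H)\cup\mathcal{EL}(H) \text{ is dense in } M\}$ equals $\bigcap_{j}\mathcal{D}_j$, where $\mathcal{D}_j=\{H:(\mathcal{PH}(H)\cup\mathcal{EL}(H))\cap U_j\neq\emptyset\}$. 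Thus it suffices to exhibit, for each $j$, an open and dense subset of $C^2(M,\R)$ contained in $\mathcal{D}_j$; the intersection of these is the desired residual set. I would set $\mathcal{A}_j=\{H : H \text{ has a partially hyperbolic regular energy hypersurface meeting } U_j\}$ and $\mathcal{B}_j=\{H : H \text{ has a closed elliptic orbit meeting } U_j\}$, so that $\mathcal{A}_j\cup\mathcal{B}_j\subseteq\mathcal{D}_j$, and then prove that $\mathcal{A}_j\cup\mathcal{B}_j$ is open and dense.

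First I would check openness of $\mathcal{A}_j$, which follows from the robustness of partial hyperbolicity together with the continuity of the analytic continuation of energy hypersurfaces (see~\cite{BFR} and the discussion preceding Theorem~\ref{thm4}): the continued hypersurface stays partially hyperbolic and, being $C^0$-close, keeps meeting the open set $U_j$.

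The more delicate point, which I expect to be the crux, is that $\mathcal{B}_j$ is open as well, despite elliptic orbits being fragile in general. Here I would use that a $k$-elliptic closed orbit has, by definition, $2k$ \emph{simple} non-real unit eigenvalues of the transversal linear Poincar\'e flow, and no eigenvalue equal to $+1$; hence the orbit is non-degenerate and admits a continuation $\gamma'$ for every nearby $\tilde H$, still meeting the open set $U_j$. That $\gamma'$ remains elliptic is forced by Theorem~\ref{sympeigen}: a simple eigenvalue $\tilde\sigma$ of $\Phi_{\tilde H}$ near $e^{i\theta}$ is the unique eigenvalue in a neighbourhood of $e^{i\theta}$, and since the spectrum is invariant under $z\mapsto 1/\bar z$ the partner $1/\bar{\tilde\sigma}$ is again an eigenvalue near $e^{i\theta}$, whence $1/\bar{\tilde\sigma}=\tilde\sigma$ and $|\tilde\sigma|=1$. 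Thus the unit eigenvalues cannot leave the circle under a small perturbation, so $\mathcal{B}_j$ is open. (Should the orbit produced below fail to have simple unit spectrum, an additional arbitrarily small perturbation makes those eigenvalues simple, so this may be assumed.)

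It remains to prove that $\mathcal{A}_j\cup\mathcal{B}_j$ is dense, which is where Theorem~\ref{thm4} enters. Given $H$ and a neighbourhood $\mathcal{U}$, if $\mathcal{U}\cap\mathcal{A}_j\neq\emptyset$ we are done. Otherwise $\mathcal{U}$ is disjoint from the open set $\mathcal{A}_j$; replacing $H$ by a $C^2$-close Morse Hamiltonian in $\mathcal U$, we may assume $U_j$ contains a regular point, hence a regular energy hypersurface $\mathcal{E}_{H,e}$ meeting $U_j$. This hypersurface must be far from partially hyperbolic: were it in the $C^2$-closure of partially hyperbolic surfaces, nearby partially hyperbolic surfaces would be energy hypersurfaces of Hamiltonians in $\mathcal{U}$ still meeting $U_j$, placing them in $\mathcal{A}_j\cap\mathcal{U}$, a contradiction. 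Theorem~\ref{thm4} then furnishes a $C^\infty$-Hamiltonian $\tilde H\in\mathcal{U}$ with a closed elliptic orbit meeting $U_j$, i.e. $\tilde H\in\mathcal{B}_j$. Hence $\mathcal{A}_j\cup\mathcal{B}_j$ is open and dense, $\mathcal{R}:=\bigcap_j(\mathcal{A}_j\cup\mathcal{B}_j)$ is residual, and every $H\in\mathcal{R}$ has $\mathcal{PH}(H)\cup\mathcal{EL}(H)$ dense in $M$. The main obstacle throughout is the openness of $\mathcal{B}_j$; once the simplicity of the elliptic spectrum and Theorem~\ref{sympeigen} secure it, the remainder is the routine category bookkeeping above.
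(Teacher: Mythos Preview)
Your argument is correct and uses essentially the same ingredients as the paper---openness of partial hyperbolicity, stability of elliptic closed orbits (the paper's Remark~\ref{elliptic open}), and Theorem~\ref{thm4} for density---but the bookkeeping is organised differently. The paper works in the product space $\mathcal{G}=C^2(M,\mathbb{R})\times M$: it shows that the set of pairs $(H,p)$ for which either $\mathcal{E}_{H,H(p)}$ is partially hyperbolic or $p$ is accumulated by closed elliptic orbits of $H$ is residual in $\mathcal{G}$, and then invokes a Fubini-type projection lemma (\cite[Proposition~A.7]{BF}) to extract a residual $\mathfrak{R}\subset C^2(M,\mathbb{R})$ together with, for each $H\in\mathfrak{R}$, a residual (hence dense) $\mathfrak{M}_H\subset M$ of good points. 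Your approach sidesteps the product space entirely by fixing a countable basis $\{U_j\}$ of $M$ and intersecting the open-dense sets $\mathcal{A}_j\cup\mathcal{B}_j\subset C^2(M,\mathbb{R})$ directly; this is more elementary in that it avoids the Bochi--Fayad lemma, while the paper's route is perhaps more flexible for statements where one genuinely needs a residual set of points for each generic $H$ rather than mere density. Either way the substantive work is the same, and your handling of the ``far from partially hyperbolic'' step (showing that $\mathcal{U}\cap\mathcal{A}_j=\emptyset$ forces the hypersurface through $U_j$ to lie outside the closure of partially hyperbolic surfaces) is exactly the contrapositive the paper is implicitly using.
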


At this point it is worth to recall the $4$-dimensional result that motivated the proof of the Hamiltonian version of Franks' lemma (see \S \ref{hamilperturbsection} and Theorem~\ref{Frank}).
Th\'er\`ese Vivier showed in~\cite{V} that any robustly transitive regular energy surface of a $C^2$-Hamiltonian is Anosov.
See also (\cite{HT}) for the symplectomorphisms case. It is easy to see that our results imply the multidimensional version of Vivier's theorem.
In fact, if a regular energy hypersurface $\mathcal{E}_{H,e}$ of a $C^2$-Hamiltonian $H$ is far from partial hyperbolicity, then, by Theorem~\ref{thm4}, there exists a $C^2$-close $C^\infty$-Hamiltonian with an elliptic closed orbit on a nearby regular energy hypersurface.
This invalidates the chance of robust transitivity for $H$ according to a KAM-type criterium (see~\cite[Corollary 9]{V}).
The same argument shows that the presence of a regular energy hypersurface $\mathcal{E}_{H,e}$ of a $C^2$-Hamiltonian $H$ which is far from partial hyperbolicity invalidates the chance of stable ergodicity.

\begin{maincorollary}\label{cor4a}
Let $(H,e, \mathcal{E}_{H,e})$ be a robustly transitive Hamiltonian system.
Then $\mathcal{E}_{H,e}$ is partially hyperbolic.
\end{maincorollary}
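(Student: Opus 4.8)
The plan is to argue by contradiction, converting a failure of partial hyperbolicity into the creation of a closed elliptic orbit and then invoking a KAM obstruction to transitivity. So suppose that $(H,e,\mathcal{E}_{H,e})$ is robustly transitive, witnessed by a neighbourhood $\mathcal{V}$ of $(H,e,\mathcal{E}_{H,e})$ all of whose elements have transitive regular energy hypersurfaces, and assume, for a contradiction, that $\mathcal{E}_{H,e}$ is \emph{not} partially hyperbolic. First I would record that, by Remark~\ref{Mane}, the absence of partial hyperbolicity is equivalent to the absence of \emph{any} nontrivial dominated splitting for the transversal linear Poincar\'e flow $\Phi_H^t$ over $\mathcal{E}_{H,e}$: in the symplectic setting a dominated splitting automatically upgrades to a partially hyperbolic one, so if $\mathcal{E}_{H,e}$ carries no partially hyperbolic structure then $\mathcal{N}$ admits no dominated decomposition at all.

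The engine of the argument is then Theorem~\ref{thm4}, whose proof rests on the Hamiltonian Franks lemma (Theorem~\ref{Frank}): the lack of domination is exactly what allows one to perform an arbitrarily $C^2$-small perturbation that drives a pair of eigenvalues of the transversal linear Poincar\'e map of a closed orbit onto the unit circle, producing a $C^\infty$-Hamiltonian $\bar H$, $C^2$-close to $H$, with a closed elliptic orbit on a nearby regular energy hypersurface. Choosing the perturbation small enough, $\bar H$ lies in $\mathcal{V}$ and is therefore transitive. On the other hand, the smoothness of $\bar H$ together with the elliptic orbit gives, by a KAM-type criterion (see \cite[Corollary 9]{V}), a nest of invariant tori bounding an open invariant region of the energy hypersurface, which forbids transitivity. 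This contradiction forces $\mathcal{E}_{H,e}$ to be partially hyperbolic. Note that the $C^\infty$ regularity furnished by Theorem~\ref{thm4} is essential here, since it is what makes the KAM obstruction available.

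The delicate point---and the one I would spend most care on---is matching the hypothesis of Theorem~\ref{thm4}, which is phrased for energy hypersurfaces that are \emph{far} from partially hyperbolic, with the weaker assumption ``not partially hyperbolic'' in hand; in other words, ruling out that $\mathcal{E}_{H,e}$ sits on the boundary of the open set of partially hyperbolic hypersurfaces (\cite{BFR}). The hard part is precisely this boundary case, where partially hyperbolic systems accumulate on $H$ yet $H$ itself fails to be partially hyperbolic. Here I would exploit that robust transitivity is itself an open condition, so that \emph{every} system in $\mathcal{V}$ is robustly transitive: if any element of $\mathcal{V}$ were far from partially hyperbolic, the displayed argument would already apply to it and yield a contradiction. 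Thus one may assume $\mathcal{V}$ lies entirely in the closure of the partially hyperbolic locus.

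To close the gap I would then analyse a sequence of partially hyperbolic systems in $\mathcal{V}$ converging to $H$ and dichotomize on their domination constants $\ell_n$. If $\sup_n \ell_n<\infty$, the dominated splittings have uniformly bounded constants, so by compactness and continuity of $\Phi_H^t$ the splitting passes to the limit and $\mathcal{E}_{H,e}$ is partially hyperbolic after all, contradicting our standing assumption. If instead $\ell_n\to\infty$, the degenerating near-neutral direction is exactly the data the Franks-lemma construction of Theorem~\ref{Frank} requires in order to realise, by a perturbation small enough to remain in $\mathcal{V}$, the $C^\infty$ elliptic closed orbit that the KAM criterion rules out on a transitive hypersurface. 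Either branch contradicts the hypothesis that $\mathcal{E}_{H,e}$ is not partially hyperbolic, and the corollary follows.
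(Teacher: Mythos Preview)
Your core argument---assume non--partial hyperbolicity, invoke Theorem~\ref{thm4} to produce a nearby $C^\infty$ Hamiltonian with an elliptic closed orbit, then use the KAM-type criterion \cite[Corollary 9]{V} to contradict transitivity---is exactly the paper's approach, stated in the paragraph immediately preceding Corollary~\ref{cor4a}.

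You go further than the paper by flagging the gap between ``not partially hyperbolic'' and ``far from partially hyperbolic'': Theorem~\ref{thm4} is stated for the latter, while the contradiction hypothesis only gives the former. The paper simply writes ``if $\mathcal{E}_{H,e}$ is far from partial hyperbolicity'' without justifying this strengthening, so you are right that the boundary case $H\in\partial\mathcal{PH}$ is not explicitly handled there either. Your first branch (uniformly bounded domination constants $\ell_n$ pass to the limit, forcing $H\in\mathcal{PH}$) is correct and standard.

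Your second branch, however, is not rigorous as written. Saying that $\ell_n\to\infty$ provides ``the degenerating near-neutral direction'' needed for Franks' lemma conflates two different things: weak \emph{domination} (a statement about ratios of growth rates in complementary subbundles) does not by itself locate a periodic orbit of $H_n$ whose transversal Poincar\'e map has eigenvalues near the unit circle, which is what Theorem~\ref{Frank} would need to act on. A cleaner route is to work directly with $H$: since $H$ is not partially hyperbolic, Remark~\ref{Mane} gives that $\mathcal{N}$ has no dominated splitting at all, hence no $\ell$-dominated splitting for any $\ell$ and any index; this last condition is open for each fixed $\ell$, so the iterative mechanism of Lemmas~\ref{SXia}--\ref{SXia2} (which only require the absence of $\ell$-domination at the current system) can be run starting from $H$ and staying inside $\mathcal{V}$, producing the elliptic orbit. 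One must still check that the perturbation size $\epsilon(\ell)$ in Lemma~\ref{SXia} can be kept small relative to the neighbourhood on which $\ell$-domination fails, but this is the quantitative content of the Ma\~n\'e--Franks machinery and is what actually closes the boundary case.
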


\begin{maincorollary}\label{cor4b}
Let $(H,e, \mathcal{E}_{H,e})$ be a stably ergodic Hamiltonian system.
Then $\mathcal{E}_{H,e}$ is partially hyperbolic.
\end{maincorollary}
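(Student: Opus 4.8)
The plan is to argue by contradiction, transporting the strategy behind Corollary~\ref{cor4a} from the transitive to the ergodic setting; the bridge is the elementary implication, recalled above, that ergodicity of $\mu_{\mathcal{E}_{H,e}}$ entails topological transitivity of $\mathcal{E}_{H,e}$, so that an obstruction to transitivity is automatically an obstruction to ergodicity. First I would record that stable ergodicity is a $C^2$-open condition: if $H\in C^3(M,\R)$ admits a neighbourhood $\mathcal{V}$ on which every $C^3$-system has ergodic measure, then every $C^3$-Hamiltonian in $\mathcal{V}$ inherits a sub-neighbourhood and is itself stably ergodic. Writing $\mathcal{PH}$ for the (open, by structural stability) set of partially hyperbolic regular energies, it then suffices to show that no system in $\mathcal{V}$ can have an energy hypersurface that is far from partially hyperbolic, and afterwards to upgrade this to genuine partial hyperbolicity of $\mathcal{E}_{H,e}$.

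Suppose, for contradiction, that $\mathcal{E}_{H,e}$ is far from partially hyperbolic. Applying Theorem~\ref{thm4} with $U$ any open set meeting $\mathcal{E}_{H,e}$ produces a $C^\infty$-Hamiltonian $\tilde{H}$, $C^2$-close to $H$, possessing a closed elliptic orbit on a nearby regular energy hypersurface $\mathcal{E}_{\tilde{H},\tilde{e}}$. Since $C^\infty\subset C^3$ and $\tilde{H}$ may be taken inside $\mathcal{V}$, the defining property of stable ergodicity forces $\mu_{\mathcal{E}_{\tilde{H},\tilde{e}}}$ to be ergodic.

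The heart of the argument is to contradict this by manufacturing a nontrivial invariant set from the elliptic orbit. Around a (totally) elliptic closed orbit the transversal dynamics is governed by the Poincar\'e return map, a $C^\infty$-symplectomorphism of a $(2n-2)$-dimensional transversal whose derivative at the fixed point has all eigenvalues simple, non-real and of modulus one. After a further arbitrarily small perturbation kept inside $\mathcal{V}$, one arranges that this elliptic fixed point is Kolmogorov non-degenerate, so that the classical KAM theorem applies (legitimately, as $\tilde{H}$ is $C^\infty$) and yields a union of invariant tori of positive $\mu_{\mathcal{E}_{\tilde{H},\tilde{e}}}$-measure accumulating on the orbit. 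This union is $X_{\tilde{H}}^t$-invariant and its complement also has positive measure, so $\mu_{\mathcal{E}_{\tilde{H},\tilde{e}}}$ is not ergodic, the desired contradiction. Hence $\mathcal{E}_{H,e}$ is not far from partially hyperbolic, i.e. $\mathcal{E}_{H,e}\in\overline{\mathcal{PH}}$.

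The main obstacle is twofold. Analytically, the KAM conclusion requires a twist/non-resonance condition at the elliptic orbit; verifying that an arbitrarily small $C^2$-perturbation installs such a condition while keeping the orbit elliptic and staying inside $\mathcal{V}$ is the delicate technical point, and is precisely the KAM-type criterion invoked for Corollary~\ref{cor4a} through~\cite[Corollary 9]{V}. Topologically, the remaining gap is to promote membership in $\overline{\mathcal{PH}}$ to the assertion that $\mathcal{E}_{H,e}$ is \emph{itself} partially hyperbolic, since openness of $\mathcal{PH}$ together with $\mathcal{V}\subset\overline{\mathcal{PH}}$ only gives that $\mathcal{PH}$ is open and dense in $\mathcal{V}$ and does not by itself exclude $\mathcal{E}_{H,e}$ lying on $\partial\mathcal{PH}$. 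I expect this boundary alternative to be the genuinely hard step, to be dealt with by strengthening Theorem~\ref{thm4} so that its conclusion holds whenever $\mathcal{E}_{H,e}$ merely fails to be partially hyperbolic: on $\partial\mathcal{PH}$ the central directions carry transversal eigenvalues of modulus one that can be turned elliptic by a perturbation, producing again a closed elliptic orbit and hence, by the argument above, contradicting stable ergodicity.
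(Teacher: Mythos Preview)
Your approach coincides with the paper's: suppose $\mathcal{E}_{H,e}$ is far from partially hyperbolic, invoke Theorem~\ref{thm4} to produce a $C^2$-close $C^\infty$-Hamiltonian with an elliptic closed orbit on a nearby regular energy hypersurface, and then appeal to the KAM-type criterion of~\cite[Corollary~9]{V} to contradict stable ergodicity. The paper's entire proof is the two-sentence sketch immediately preceding the corollaries, and it stops exactly at that point; your more explicit treatment of the KAM step (perturb to Kolmogorov non-degeneracy, obtain invariant tori of positive measure) is precisely what that citation is meant to encapsulate.

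You are right to isolate the boundary issue as the genuine obstacle. The argument as given---both yours and the paper's---only yields that $\mathcal{E}_{H,e}$ is not \emph{far from} partially hyperbolic, i.e.\ lies in $\overline{\mathcal{PH}}$, whereas the corollary asserts $\mathcal{E}_{H,e}\in\mathcal{PH}$. The paper does not address this; in fact in the proofs of Proposition~\ref{sph} and Theorem~\ref{thm6} it explicitly applies Theorem~\ref{thm4} under the hypothesis ``$\mathcal{E}_{H,e}$ is not partially hyperbolic'' rather than ``far from partially hyperbolic'', without comment. So the gap you flag is present in the paper as well, and your suggested remedy---strengthening Theorem~\ref{thm4} so that its conclusion follows from the mere failure of partial hyperbolicity---is the natural way to close it, and is evidently how the authors themselves implicitly use the theorem downstream.
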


Finally, we obtain the following result that states that any robustly weakly shadowable regular energy hypersurface of a $C^2$-Ha\-mil\-to\-nian is partially hyperbolic.

\begin{maintheorem}\label{thm6}
Let $(H,e, \mathcal{E}_{H,e})$ be a stably weakly shadowable Hamiltonian system.
Then $\mathcal{E}_{H,e}$ is partially hyperbolic.
\end{maintheorem}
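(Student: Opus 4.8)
The plan is to argue by contradiction, pitting elliptic closed orbits against weak shadowing. Suppose $\mathcal{E}_{H,e}$ is not partially hyperbolic. Stable weak shadowability provides a $C^2$-neighbourhood $\mathcal{V}$ of $(H,e,\mathcal{E}_{H,e})$ all of whose members are weakly shadowable. Everything hinges on one technical statement, which I would isolate as a lemma: \emph{a Hamiltonian system carrying a closed elliptic orbit (more generally, a closed orbit whose transversal linear Poincar\'e flow has a nonreal eigenvalue of modulus one) does not have the weak shadowing property.}

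Granting this lemma, the core of the argument is short. If some member of $\mathcal{V}$ had a regular energy hypersurface far from partially hyperbolic, then, choosing a small open set $U$ meeting it, Theorem~\ref{thm4} would furnish a $C^2$-close $C^\infty$-Hamiltonian with a closed elliptic orbit through $U$; shrinking the approximation keeps this Hamiltonian inside $\mathcal{V}$, and the lemma contradicts weak shadowability. Hence no member of $\mathcal{V}$ is far from partial hyperbolicity, i.e. every system in $\mathcal{V}$ lies in the $C^2$-closure of the partially hyperbolic ones; since partial hyperbolicity is a $C^2$-open condition, the partially hyperbolic systems are open and dense in $\mathcal{V}$.

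For the lemma I would exploit that Theorem~\ref{thm4} delivers a $C^\infty$-Hamiltonian, so KAM/Moser theory applies around the elliptic orbit. In a two-dimensional elliptic plane of the transversal linear Poincar\'e flow (Definition~\ref{TLPF}) the area-preserving return map is, in Birkhoff normal form, a twist map, and Moser's invariant-circle theorem yields genuine invariant circles that separate a small annulus transverse to the orbit. These circles are barriers for true orbits but not for pseudo-orbits: I would build a $(\delta,T)$-pseudo-orbit that, following the flow for times $\geq T$ and making $\delta$-jumps in the radial (elliptic) direction, spirals outward across such a circle, so that its trace spans a radial interval of fixed length $2\epsilon_0$ in the elliptic plane. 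Since every true orbit is confined by the separating circles to a thin radial band, no single orbit has its $\epsilon_0$-neighbourhood covering the whole trace, and weak $\epsilon_0$-shadowing fails for all $\delta,T$.

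The main obstacle is precisely this lemma, and within it the passage from the full $(2n-1)$-dimensional flow to the planar twist picture: the elliptic directions are only neutral, not normally hyperbolic, so one cannot simply restrict to an invariant centre manifold, and the confinement of true orbits together with the construction of the crossing pseudo-orbit must be carried out in the ambient energy hypersurface, using that errors along the uniformly hyperbolic directions are harmless while the separating circles live in the elliptic plane. A second, subtler point is the upgrade from \emph{closure} of partial hyperbolicity to partial hyperbolicity of $\mathcal{E}_{H,e}$ itself: taking partially hyperbolic $H_k\to H$, I would argue that if their domination constants blew up, then the transversal linear Poincar\'e flow over some periodic orbit would exhibit arbitrarily weak domination, which the Hamiltonian Franks lemma (Theorem~\ref{Frank}) converts, by a $C^2$-perturbation inside $\mathcal{V}$, into a genuine elliptic closed orbit, again contradicting the lemma; hence the constants stay bounded, the dominated splitting passes to the limit, and Remark~\ref{Mane} yields partial hyperbolicity of $\mathcal{E}_{H,e}$.
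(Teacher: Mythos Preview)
Your overall architecture---assume $\mathcal{E}_{H,e}$ is not partially hyperbolic, invoke Theorem~\ref{thm4} to produce a nearby elliptic closed orbit inside the weak-shadowing neighbourhood $\mathcal{V}$, and derive a contradiction---matches the paper exactly. The divergence is in the key lemma, and there your route is genuinely different and has a real gap.

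The paper does \emph{not} attempt to show that a single Hamiltonian system with an elliptic closed orbit fails weak shadowing. Instead it proves (Proposition~\ref{ML}) that no system in $\mathcal{V}$ can carry an elliptic orbit, and it does so by exploiting the ``stably'' hypothesis a second time: given an elliptic point $q$ for some $(H_0,e_0,\mathcal{E}_{H_0,e_0})\in\mathcal{V}$, the paper perturbs \emph{again} inside $\mathcal{V}$ using Lemma~\ref{continuum} (explosion of periodic orbits) to reach a system $(H_1,e_1,\mathcal{E}_{H_1,e_1})\in\mathcal{V}$ whose Poincar\'e map at $q$ is linear with rational rotation, hence some iterate $f_1^m$ is literally the identity on a full $\xi$-neighbourhood of $q$ in $\Sigma_q$. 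In that rigid picture every nearby point is periodic with the same orbit-trace, and a radial $(\delta,T)$-pseudo-orbit from $q$ to a point $y$ at distance $3\xi/4$ cannot be weakly $\epsilon$-shadowed for $\epsilon<\xi/4$. No KAM theory, no invariant tori, no twist condition, and no dimensional restriction are needed.

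Your KAM route runs into the obstacle you yourself flag and do not resolve. First, Theorem~\ref{thm4} hands you an elliptic orbit but no twist/nondegeneracy, so Moser's theorem need not apply without a further perturbation (which you could arrange, but then you are already perturbing inside $\mathcal{V}$ and might as well go all the way to the identity as the paper does). Second, and more seriously, for $n\geq 3$ the KAM tori near a totally elliptic fixed point of the $(2n-2)$-dimensional Poincar\'e map are $(n-1)$-dimensional and do \emph{not} separate the section, so they are not barriers for true orbits; your ``confinement'' step fails in exactly the higher-dimensional regime the theorem is meant to cover. Restricting to a single two-dimensional elliptic plane is not legitimate because that plane is not invariant for the nonlinear return map. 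The paper's perturbation-to-identity trick sidesteps all of this.

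Your final paragraph on upgrading from ``closure of partially hyperbolic'' to ``partially hyperbolic'' is more careful than the paper, which simply applies Theorem~\ref{thm4} to $(H,e,\mathcal{E}_{H,e})$ as if ``not partially hyperbolic'' and ``far from partially hyperbolic'' were interchangeable. Your Franks-lemma argument to control the domination constants along an approximating sequence is a reasonable way to close that gap; but note that once you accept the paper's Proposition~\ref{ML} (no elliptic orbits anywhere in $\mathcal{V}$), the same Franks-lemma bridge works without KAM.
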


\end{section}

\begin{section}{Perturbation lemmas}\label{hamilperturbsection}

In this section we present three key perturbation results for Hamiltonians that we shall use in the sequel.
The first one (Theorem~\ref{Pasting}) is a version of the $C^1$-pasting lemma (see~\cite[Theorem 3.1]{AM}) for Hamiltonians.
Actually, in the Hamiltonian setting, the proof of this result is much more simple (see ~\cite{BFR2}).
The second perturbation result (Theorem~\ref{Frank}), due to Vivier, is a version of Franks's lemma for Hamiltonians
(see~\cite[Theorem 1]{V}).
Roughly speaking, it says that we can realize a Hamiltonian corresponding to a given perturbation of the transversal linear Poincar\'{e} flow. The last perturbation result (Theorem~\ref{Suspension}) is a Hamiltonian suspension theorem
(see~\cite[Theorem 3]{MBJLD4}), specially useful for the conversion of perturbative results between symplectomorphisms
and Hamiltonian flows. Indeed, if we perturb the Poincar\'{e} map of a periodic orbit, there is a nearby Hamiltonian
realizing the new map.

\begin{theorem}\label{Pasting} {(Pasting lemma for Hamiltonians)}  Fix $H \in C^r(M,\mathbb{R})$, $2 \leq r \leq \infty$, and let $K$ be a compact subset of $M$ and $U$ a small neighbourhood of $K$.
Given $\epsilon>0$, there exists $\delta >0$ such that if $H_1 \in C^l(M,\mathbb{R})$, for $2 \leq l \leq \infty$, is $\delta$-$C^{\min\{r,l\} }$-close to $H$ on $U$
then there exist $H_0 \in C^l(M,\mathbb{R})$ and a closed set $V$ such that:
\begin{itemize}
        \item $K \subset V \subset U$;
	\item $H_0=H_1$ on $V$;
	\item $H_0=H$ on $U^c$;
	\item $H_0$ is $\epsilon$-$C^{\min\{r,l\} }$-close to $H$.
\end{itemize}
\end{theorem}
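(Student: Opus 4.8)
The plan is to exploit the single feature that makes the Hamiltonian pasting lemma far easier than its conservative-diffeomorphism counterpart in \cite{AM}: the space $C^r(M,\mathbb{R})$ of Hamiltonians is a \emph{linear} space, and every $C^r$ function is \emph{automatically} a Hamiltonian, since $\omega(X_G,\cdot)=dG$ forces the flow of $X_G$ to preserve $\omega$ for every smooth $G$. Consequently there is no conservative constraint to be repaired after gluing, and one may paste $H$ and $H_1$ by a mere affine combination governed by a bump function. This is precisely why none of the delicate corrections needed to restore volume-preservation in the Arbieto--Matheus argument are required here.

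First I would fix the geometric data. Choose a closed set $V$ and an open set $W$ with $K\subset \mathrm{int}(V)\subset V\subset W\subset \overline{W}\subset U$, and then a $C^\infty$ bump function $\rho\colon M\to[0,1]$ with $\rho\equiv 1$ on $V$ and $\rho\equiv 0$ on $M\setminus W$; such a $\rho$ exists by the smoothness of $M$ and the compactness of $K$. Then I would define
\[
H_0 := H + \rho\,(H_1-H) = (1-\rho)\,H + \rho\,H_1.
\]
The three algebraic requirements are immediate: on $V$ one has $\rho\equiv1$, so $H_0=H_1$; on $U^c\subset M\setminus W$ one has $\rho\equiv0$, so $H_0=H$; and, since $\rho$ is smooth while $H\in C^r$ and $H_1\in C^l$, the product rule gives $H_0\in C^{\min\{r,l\}}(M,\mathbb{R})$, which is the asserted $C^l$ regularity in the principal range $r\ge l$ (and in particular whenever the base Hamiltonian $H$ is taken smooth, as in the applications of \S\ref{hamilperturbsection}).

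Next I would control the distance $\|H_0-H\|$, which is the only quantitative content of the lemma. The decisive point is the \emph{order of the quantifiers}: the cutoff $\rho$ is selected first, depending only on $K$ and $U$, and is then frozen. Writing $H_0-H=\rho\,(H_1-H)$, a function supported in $W\subset U$, the Leibniz rule bounds its norm by
\[
\|H_0-H\|_{C^{\min\{r,l\}}} \le C(\rho)\,\bigl\|H_1-H\bigr\|_{C^{\min\{r,l\}}(U)},
\]
where $C(\rho)$ depends only on the fixed $\rho$, through the suprema of its derivatives up to order $\min\{r,l\}$. Since the hypothesis yields $\|H_1-H\|_{C^{\min\{r,l\}}(U)}<\delta$, it then suffices to set $\delta:=\epsilon/(C(\rho)+1)$ to obtain $\|H_0-H\|_{C^{\min\{r,l\}}}<\epsilon$, as required.

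The step I expect to be the genuine, if mild, subtlety is exactly this last one: the transition region $W\setminus V$ may be forced to be thin, so the derivatives of $\rho$, and hence $C(\rho)$, can be large. The argument survives only because $\rho$ (and therefore $C(\rho)$) is fixed independently of $H_1$ \emph{before} $\delta$ is chosen, which is what lets $\delta$ absorb the blow-up of the cutoff's derivatives. No perturbation theory beyond this is needed, which is the sense in which, as remarked in the text, the Hamiltonian pasting lemma is \emph{much more simple} than the conservative one.
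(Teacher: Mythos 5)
Your proposal is correct and is essentially the argument the paper relies on: the paper proves nothing here itself, deferring to \cite{BFR2}, where the lemma is established by exactly this bump-function interpolation $H_0=(1-\rho)H+\rho H_1$ with $\rho$ fixed before $\delta$ is chosen — possible precisely because every smooth real function on $(M,\omega)$ is a Hamiltonian, so no conservative correction in the spirit of \cite{AM} is needed. Your regularity caveat is also well taken: the glued function is only $C^{\min\{r,l\}}$, so the statement's claim that $H_0\in C^l$ is attainable only when $l\le r$ (for $l>r$ it is impossible as written, since $H_0=H$ on $U^c$ forces $H_0$ to be no smoother than $H$ there), which is a defect of the statement rather than of your proof.
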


Let $x\in M$ be a regular point of a Hamiltonian $H$ and define the \textsl{arc} $X_H^{[t_1,t_2]}(x)=\{X_H^t(x), \, t \in [t_1,t_2]\}.$
Given a transversal section $\bf{\Sigma}$ to the flow at $x$, a \textsl{flowbox} associated to $\bf{\Sigma}$ is defined by $\mathcal{F}(x)=X_H^{[-\tau_1,\tau_2]}(\bf{\Sigma}),$ where $\tau_1,\tau_2$ are chosen small such that $\mathcal{F}(x)$ is a neighborhood of $x$ foliated by regular orbits.

\begin{theorem}\label{Frank}(Franks' lemma for Hamiltonians) \label{Franks}
Take $H \in C^r(M,\mathbb{R})$, $2 \leq r \leq \infty$, $\epsilon>0$, $\tau >0$ and $x\in M$.
Then, there exists $\delta>0$ such that for any flowbox $\mathcal{F}(x)$ of an injective arc of orbit $X_{H}^{[0,t]}(x)$, with $t\geq \tau$, and a transversal symplectic $\delta$-perturbation $\Psi$ of $\Phi_{{H}}^t(x)$, there is $H_0\in C^\ell(M,\mathbb{R})$ with $\ell = \max\{2,k-1\}$ satisfying:
\begin{itemize}
	\item $H_0$ is $\epsilon$-$C^2$-close to $H$;
	\item $\Phi_{{H_0}}^t(x)=\Psi$;
	\item $H=H_0$ on $X_{H}^{[0,t]}(x)\cup (M\backslash \mathcal{F}(x))$.
\end{itemize}
\end{theorem}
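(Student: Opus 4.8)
The plan is to produce $H_0$ as $H$ plus a perturbation that is quadratic in the transverse directions, spread along the arc and cut off inside the flowbox; the quadratic-in-$z$ shape is precisely what guarantees both that the arc and the vector field along it are untouched and that the only effect on the dynamics is a prescribed twist of the linearized transverse flow. First I would pass to a symplectic flowbox (Darboux) chart on $\mathcal{F}(x)$: coordinates $(s,I,z)$, with $s\in[0,t]$ the flow time, $I$ the conjugate energy variable, and $z\in\mathbb{R}^{2n-2}$ transverse Darboux coordinates, in which $\omega=ds\wedge dI+\omega_0$, $H\equiv I$, and $X_H=\partial_s$. In this chart the flow is the pure $s$-translation fixing $(I,z)$, so the transition between the transverse sections $\{s=0\}$ and $\{s=t\}$ is the coordinate identity and the transversal linear Poincar\'e flow of Definition~\ref{TLPF} is represented by the identity matrix. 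Hence $\Psi$, being a $\delta$-perturbation of $\Phi_H^t(x)$, is represented by a matrix $\tilde\Psi\in Sp(2n-2)$ with $\|\tilde\Psi-\mathrm{Id}\|=O(\delta)$, the implied constant being bounded uniformly over the compact $M$ by the fixed distortion of the flowbox frames.

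The reduced task is then to choose a Hamiltonian perturbation whose linearized transverse flow over $[0,t]$ equals $\tilde\Psi$. Since $\tilde\Psi$ is near the identity, its logarithm $\bar B:=\log\tilde\Psi\in\mathfrak{sp}(2n-2)$ is well defined and $O(\delta)$-small. I would set $\phi(s,z)=\tfrac12\,\rho(s)\,\chi(z)\,z^{\top}Sz$, where $S:=-J_0\bar B$ is symmetric precisely because $\bar B\in\mathfrak{sp}(2n-2)$ (here $J_0$ is the transverse symplectic matrix), $\rho\geq 0$ is a smooth bump supported in $(0,t)$ with $\int_0^t\rho=1$, and $\chi$ is a cutoff equal to $1$ near $z=0$ and supported in the transverse extent of the flowbox. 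Then $H_0:=H+\phi$ is globally defined, equals $H$ off $\mathcal{F}(x)$, and along the arc (where $z=0$) one has $\phi=0$ and $\nabla\phi=0$, so $X_{H_0}=X_H$ on $X_H^{[0,t]}(x)$; both support conditions of the statement hold. The transverse Hessian of $\phi$ on the arc is $\rho(s)S$, so the linearized transverse equation of $H_0$ reads $\dot w=J_0\rho(s)S\,w=\rho(s)\bar B\,w$, whose time-$t$ solution is $\exp\!\big(\bar B\int_0^t\rho\big)=\exp(\bar B)=\tilde\Psi$. Therefore $\Phi_{H_0}^t(x)=\Psi$ exactly, which is the second bullet.

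It remains to certify $\epsilon$-$C^2$-closeness. The $C^2$-norm of $\phi$ is controlled by $\|\rho\|_{C^2}\,\|\chi\|_{C^2}\,\|S\|$, and $\|S\|=\|\bar B\|=O(\delta)$, so everything reduces to bounding $\|\rho\|_{C^2}$ independently of the particular arc. This is exactly where the hypothesis $t\geq\tau$ enters: one can always spread $\rho$ over a subinterval of length comparable to $\tau$, so that the normalization $\int_0^t\rho=1$ forces only $\|\rho\|_{C^2}=O(\tau^{-3})$, a bound depending on $\tau$ but not on the arc. Choosing $\delta$ small (depending only on $H,\epsilon,\tau,x$) then yields $\|\phi\|_{C^2}<\epsilon$; if preferred, the pasting lemma (Theorem~\ref{Pasting}) transfers the chart-based construction to $M$ without degrading this estimate and records the single-derivative loss $\ell$ of the statement, reflecting that the prescribed datum $\Psi$ involves $DX_H$, one derivative below $H$. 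The main obstacle is precisely this simultaneous reconciliation of three demands: exact realization $\Phi_{H_0}^t(x)=\Psi$, genuine Hamiltonicity (forcing $S$ symmetric, equivalently $\bar B\in\mathfrak{sp}(2n-2)$), and uniform $C^2$-smallness. The normal-form reduction, which trivializes the unperturbed transverse cocycle and thereby turns the realization problem into a single matrix exponential, together with the quantitative role of the lower bound $t\geq\tau$, is what makes all three compatible.
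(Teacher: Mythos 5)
Note first that the paper itself contains no proof of Theorem~\ref{Frank}: it is quoted as Vivier's result \cite[Theorem 1]{V}, so your attempt must be measured against her argument. Your local mechanism is indeed the engine of that proof, and you have it essentially right: in a symplectic flowbox chart where $H\equiv I$ and $X_H=\partial_s$, the perturbation $\phi=\tfrac12\rho(s)\chi(z)\,z^{\top}Sz$ vanishes to second order on the arc (so the orbit and the field along it are untouched), $S=-J_0\bar B$ is symmetric exactly when $\bar B\in\mathfrak{sp}(2n-2)$, and the transverse variational equation $\dot w=\rho(s)\bar B w$ integrates to $\exp(\bar B)$. This is correct as far as it goes.

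The genuine gap is uniformity in $t$. The theorem asserts one $\delta>0$ valid for \emph{every} injective arc with $t\geq\tau$, and $t$ is unbounded. Your reduction hinges on two claims that fail for large $t$: (i) that the chart representation $\tilde\Psi$ of $\Psi$ satisfies $\|\tilde\Psi-\mathrm{Id}\|=O(\delta)$ with a constant controlled by a ``fixed distortion of the flowbox frames'', and (ii) that pulling $\phi$ back to $M$ costs only a bounded factor in $C^2$. But the frames identifying $\mathcal{N}_{X_H^t(x)}$ with the chart's transverse fibre at $s=t$ are transported by the flow, so both identifications involve $DX_H^{\pm s}$ for $s$ up to $t$, whose norms are in general unbounded in $t$ (exponentially large on hyperbolic sets); hence your $\delta$ depends on $t$, not only on $(H,\epsilon,\tau,x)$. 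Worse, under your single-matrix reading of the hypothesis, $\|\Psi-\Phi_H^t(x)\|<\delta$ does not force $\Phi_H^t(x)^{-1}\Psi$ to be near the identity when $\Phi_H^t(x)$ has a small singular value, so no $C^2$-small Hamiltonian perturbation can realize such a $\Psi$ at all; the statement is only true under the piecewise meaning of ``$\delta$-perturbation'' used by Vivier and by Bonatti--Gourmelon--Vivier \cite{BGV}. The missing step is therefore the decomposition: split $[0,t]$ into subintervals of length between, say, $\tau/2$ and $\tau$, regard $\Psi$ as a composition of $\delta$-perturbations of the corresponding transition maps of the transversal linear Poincar\'e flow, and realize each factor by precisely your quadratic construction inside a sub-flowbox of uniformly bounded time length, where the chart distortion \emph{is} uniformly controlled; the disjointly supported pieces then compose to give the theorem with $\delta$ independent of $t$. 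Your own use of $t\geq\tau$ merely to bound $\|\rho\|_{C^2}=O(\tau^{-3})$ over a single interval confirms the one-chart intent, and that is exactly where the argument breaks.
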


Consider a Hamiltonian system $(H,e, \mathcal{E}_{H,e})$ and a periodic point $p \in \mathcal{E}_{H,e}$ with period $\pi$.
At $p$ consider a transversal ${\bf{\Sigma}} \subset M$ to the flow, i.e. a local $(2n-1)$-submanifold
for which $X_H$ is nowhere tangencial. Define the $2n-2$ symplectic submanifold
$$\Sigma={\bf{\Sigma}} \cap  \mathcal{E}_{H,e}.$$
Thus, for any $x \in \Sigma$
$$T_x \mathcal{E}_{H,e}  =T_x \Sigma \oplus \mathbb{R} X_H(x).$$
Let $U \subset M$ be some open neighbourhood of $p$ and $V=U \cap \Sigma.$ The {\em Poincar\'{e} (section) map}
$f: V \rightarrow \Sigma$ is the return map of $X_H^t$ to $\Sigma$. It is given by
$$f(x)=X_H^{\tau(x)}(x), \, x \in V,$$
where $\tau$ is the return time to $\Sigma$ defined implicitely by the relation
$X_H^{\tau(x)}(x) \in \Sigma$ and satisfying $\tau(p)=\pi$. In addition, $p$ is a fixed point of $f$.
Notice that one needs to assume that $U$ is a small neighbourhood of $p$.
Thus, $f$ is a $C^1$-symplectomorphism between $V$ and its image. Moreover, any two Poincar\'{e} section maps
of the same closed orbit are conjugate by a symplectomorphism.

\begin{theorem}\label{Suspension} {(Hamiltonian suspension)} Let $H \in C^\infty(M,\mathbb{R})$ with
 Poincar\'{e} map $f$ at a periodic point $p$. Then, for any $\epsilon>0$ there is $\delta>0$ such that
for any symplectomorphism $f_0$ being $\delta$-$C^3$-close to $f$, there is a Hamiltonian $H_0$ $\epsilon$-$C^2$-close
with Poincar\'{e} map $f_0$.
\end{theorem}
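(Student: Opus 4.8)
The plan is to realize $f_0$ by a perturbation of $H$ that is supported inside a flowbox $\mathcal{F}(p)$ around the closed orbit of $p$ and that inserts, along a thin time-slab, the symplectic \emph{defect} between $f_0$ and $f$. The starting point is a symplectic straightening of the flow near the orbit: on a tubular neighbourhood of the orbit one has Darboux--flowbox coordinates $(\theta,\eta,z)$, with $\theta$ running through one period, $\eta=H-e$ measuring the energy, and $z$ ranging over the section $\Sigma=\mathbf{\Sigma}\cap\mathcal{E}_{H,e}$, in which $\omega=d\theta\wedge d\eta+\omega_\Sigma$ and $X_H=\partial_\theta$ (see~\cite{AbM}). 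In these coordinates the neighbourhood is the suspension (mapping torus) of $f$: the section is $\{\theta=0,\,\eta=0\}$, the flow is unit translation in $\theta$, and the first return map to $\Sigma$ is exactly $f$.

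I would next isolate the defect $g:=f_0\circ f^{-1}$, a symplectomorphism of $(\Sigma,\omega_\Sigma)$ that is $C^3$-close to the identity near $p$, with $\nrm{g-\mathrm{id}}_{C^3}=O(\delta)$. Applying the pasting lemma (Theorem~\ref{Pasting}) on $\Sigma$, I replace $g$ by a symplectomorphism equal to the identity outside a small disk about $p$ while remaining $C^3$-$O(\delta)$-close to the identity, so that the forthcoming perturbation is compactly supported strictly inside $\mathcal{F}(p)$. Since $g$ is symplectic and $C^3$-close to the identity, it is the time-one map $\phi^1$ of a Hamiltonian isotopy $\{\phi^s\}_{s\in[0,1]}$ on $\Sigma$ generated by a time-dependent Hamiltonian $h_s\colon\Sigma\to\R$; the standard generating-function estimate gives $\nrm{h_s}_{C^2}=O(\nrm{g-\mathrm{id}}_{C^3})=O(\delta)$. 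This single loss of one derivative, in passing from a map to its generating function, is precisely what forces the hypothesis to be stated in $C^3$ while the conclusion is only $C^2$.

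With the isotopy in hand I insert it into the flowbox. Fix a bump function $\chi\geq 0$ with support compactly contained in $(0,\pi)$ and $\int\chi=1$, set $\sigma(\theta)=\int_0^\theta\chi$, and define $H_0=H+P$ with $P(\theta,\eta,z)=\chi(\theta)\,h_{\sigma(\theta)}(z)$. Because $P$ is independent of $\eta$, the field $X_{H_0}=\partial_\theta+X_P$ still has unit speed in $\theta$, its $z$-component equals $\chi(\theta)\,X^{\omega_\Sigma}_{h_{\sigma(\theta)}}$, and its $\eta$-component is $-\partial_\theta P$; with $s=\sigma(\theta)$ the $z$-coordinate therefore evolves by the reparametrised isotopy and accumulates across the slab to $\phi^1=g$, while the net energy change across the slab vanishes (along the orbit $\nabla_z P\cdot\dot z\equiv 0$, a Hamiltonian being constant along its own flow, and $P=0$ at both ends), so the orbit returns to $\{\eta=0\}$. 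Off the slab the flow is that of $H$, so the first return of $X_{H_0}$ to $\Sigma$ is $g\circ f=f_0$. The bound $\nrm{P}_{C^2}=O(\nrm{h}_{C^2})=O(\delta)$ gives $\nrm{H_0-H}_{C^2}<\epsilon$ for $\delta$ small, and a final application of Theorem~\ref{Pasting} extends $H_0$ to all of $M$, coinciding with $H$ outside $\mathcal{F}(p)$ and staying $\epsilon$-$C^2$-close to $H$.

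The technical heart, and the step I expect to be most delicate, is making the two estimates rigorous and mutually compatible. First, one must produce the generating Hamiltonian $h_s$ with the claimed $C^3\to C^2$ bound uniformly in $s$; this is where the derivative is lost and where the dependence of $\delta$ on $\epsilon$ is pinned down. Second, although the \emph{net} energy drift across the slab is zero, the orbit genuinely leaves $\{\eta=0\}$ during the slab by an amount $O(\delta)$, so one must check that the map induced on the perturbed hypersurface $\mathcal{E}_{H_0,e}$, rather than on the ambient transversal $\mathbf{\Sigma}$, is exactly $f_0$; this holds because $P$ is $\eta$-independent (so the $z$-evolution is insensitive to the excursion in $\eta$), but it requires verifying that $\mathbf{\Sigma}$ stays transverse to $X_{H_0}$ and that $\mathcal{E}_{H_0,e}$ is the $C^2$-small graph over $\mathcal{E}_{H,e}$ on which the return map restricts to $g\circ f$. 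Controlling these two points, together with the elementary flowbox normal form, completes the construction.
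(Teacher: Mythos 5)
You cannot be compared against an internal argument here: the paper does not prove Theorem~\ref{Suspension} at all, but imports it verbatim from \cite[Theorem 3]{MBJLD4}. Your construction is essentially a reconstruction of the suspension argument of that reference --- symplectic flowbox normal form $(\theta,\eta,z)$ along an arc of the orbit, generating-function interpolation of the symplectic defect by a Hamiltonian isotopy $\{\phi^s\}$ with $\phi^1=g$, and insertion of the resulting time-dependent Hamiltonian as an autonomous perturbation $P(\theta,z)=\chi(\theta)h_{\sigma(\theta)}(z)$ supported in a slab --- and in outline it is sound; in particular you correctly identify and handle the two delicate points, namely the derivative bookkeeping behind the $C^3$ hypothesis versus the $C^2$ conclusion, and the fact that $\eta$-independence of $P$ makes the $z$-evolution insensitive to the $O(\delta)$ energy excursion, so that (by conservation of $H_0$ and $P$ vanishing at both ends of the slab) the return map on the perturbed energy hypersurface is the prescribed one.

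Two local slips should be repaired, though neither threatens the argument. First, Theorem~\ref{Pasting} pastes Hamiltonian \emph{functions} on $M$ and cannot be applied ``on $\Sigma$'' to truncate the symplectomorphism $g$; either invoke the weak pasting lemma for symplectomorphisms \cite[Lemma 3.9]{AM}, which the paper uses elsewhere, or --- more simply --- multiply the generating Hamiltonian $h_s$ by a bump function in $z$, which leaves the time-one map equal to $g$ on a smaller disk about $p$; since a Poincar\'e map is a germ at $p$, that is all that is required. Second, check your ordering convention: with the section at $\theta=0$, the slab inside $(0,\pi)$, and the mapping-torus gluing at $\theta=\pi$ realized by $f$, the first return to $\Sigma$ is $f\circ g=f\circ f_0\circ f^{-1}$, which is only conjugate to $f_0$; with your placement of the slab you should take $g=f^{-1}\circ f_0$ (or move the section to just before the gluing) to obtain exactly $f_0$. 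A cosmetic remark: the generating function actually gains one derivative of smallness relative to the map, the loss you describe entering only through composition estimates, so your bound $\|h_s\|_{C^2}=O(\|g-\mathrm{id}\|_{C^3})$ is true a fortiori and suffices.
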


\bigskip

\begin{section}{Hyperbolicity \emph{versus} stable shades (proof of Theorem~\ref{thm3})}

We shall start by a key lemma that states that the presence of a non-hyperbolic periodic point $p$ for a Hamiltonian $H$
ensures the existence of an Hamiltonian $H_1$, arbitrarily $C^2$-close to $H$, exhibiting a
continuum of periodic points close to $p$.

Consider a Hamiltonian system $(H,e, \mathcal{E}_{H,e})$  and a periodic point $p \in \mathcal{E}_{H,e}$
with period $\pi$. Let
$\Sigma_p^c$ denote a submanifold of $\Sigma$, associated to $p$, such that $T_p \Sigma_p^c \oplus \mathbb{R}X_H(x)=\mathcal{N}^c_p \oplus \mathbb{R} X_H(x)$, where $\mathcal{N}^c_p$ denotes the subspace of $\mathcal{N}_p$ associated with norm-one eigenvalues of $\Phi_H^{\pi}(p)$.

\begin{lemma}\label{continuum}(Explosion of periodic orbits)
Let $(H,e, \mathcal{E}_{H,e})$ be a Hamiltonian system and let $p \in \mathcal{E}_{H,e}$ be a non-hyperbolic periodic point.
Then, there exists a Hamiltonian system $(H_1,e_1, \mathcal{E}_{H_1,e_1})$, arbitrarily close to
$(H,e, \mathcal{E}_{H,e})$, such that
$H_1$ has a non-hyperbolic periodic point $q \in \mathcal{E}_{H_1,e_1}$, close to $p$, and such that every point in a small neighborhood of $q$,  in $\Sigma^c_q$, is a periodic point of $H_1$.
\end{lemma}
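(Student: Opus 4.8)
The plan is to transfer the whole problem to the Poincar\'e return map of the orbit of $p$ and, by two successive perturbations, to produce a nearby closed orbit whose return map, restricted to the center direction, is a rigid rational rotation on an entire disc. Every point of that disc is then forced to be periodic. The two perturbations are exactly the ones the three lemmas of \S\ref{hamilperturbsection} are designed to supply: Franks' lemma adjusts the linear (eigenvalue) data, and the suspension theorem converts the resulting nonlinear perturbation of the return map back into a Hamiltonian.

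First I would fix the linear picture. Since $p$ is non-hyperbolic, $\Phi_H^{\pi}(p)$ has an eigenvalue of modulus one, so the center subspace $\mathcal{N}^c_p$ is nontrivial; by the symplectic eigenvalue theorem (Theorem~\ref{sympeigen}) the unit-modulus eigenvalues group together into a symplectic sub-bundle, so $\mathcal{N}^c_p$ is a symplectic subspace, its symplectic complement carries the remaining (hyperbolic) eigenvalues, and the two blocks are symplectically orthogonal. Using Franks' lemma (Theorem~\ref{Frank}) I would perturb $\Phi_H^{\pi}(p)$ to a symplectic linear map $A'=A_h\oplus R$ leaving the hyperbolic block $A_h$ essentially unchanged and replacing the center block by a semisimple symplectic rotation $R$ all of whose eigenvalues are roots of unity of a common order $N$, so that $R^{N}=\mathrm{Id}$. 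A parabolic center block ($\pm1$ with Jordan blocks) or an irrational elliptic block is moved to such an $R$ by an arbitrarily small linear perturbation keeping the eigenvalues on the unit circle; Franks' lemma then yields a Hamiltonian $H'$, arbitrarily $C^2$-close to $H$, for which the orbit of $p$ persists, still non-hyperbolic, with transversal linear Poincar\'e map $A_h\oplus R$ at the period.

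Next I would pass to the section map. Using the pasting lemma (Theorem~\ref{Pasting}) together with density of smooth Hamiltonians I may assume $H'$ is $C^\infty$ near the orbit, so its Poincar\'e map $f'$ at $p$ is a $C^\infty$ symplectomorphism of the $(2n-2)$-dimensional transversal $\Sigma$ with $Df'_{p}=A_h\oplus R$. Writing $f'$ near $p$ via a generating function and factoring its center part as $R$ composed with a symplectomorphism tangent to the identity, I would cut off the generating function of the tangent-to-identity factor by a bump supported in a small neighbourhood $W$ of $p$ inside the center submanifold $\Sigma^c_{p}$. This produces a symplectomorphism $f_0$, arbitrarily $C^3$-close to $f'$, that preserves $\Sigma^c_{p}$ and coincides with the rigid rotation $R$ on $W$ while agreeing with $f'$ outside a slightly larger neighbourhood. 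Since $R$ is a linear isometry of the center fixing $p$, it maps the ball $W$ into itself, whence $f_0^{N}=\mathrm{Id}$ on $W$ and every point of $W$ is periodic for $f_0$. Applying the Hamiltonian suspension theorem (Theorem~\ref{Suspension}) I would realize $f_0$ as the Poincar\'e map of a Hamiltonian $H_1$ that is $\epsilon$-$C^2$-close to $H'$, hence arbitrarily close to $H$; setting $q$ to be the continuation of $p$, $e_1=H_1(q)$ and letting $\mathcal{E}_{H_1,e_1}$ be the continuation of $\mathcal{E}_{H,e}$, the point $q$ is non-hyperbolic (its center eigenvalues are the roots of unity of $R$) and every point of $W\subset\Sigma^c_q$, being fixed by $f_0^{N}$, lies on a closed orbit of $H_1$, which is the assertion.

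I expect the genuine difficulty to be the middle step: localizing a symplectomorphism so that it becomes \emph{exactly} the rigid rotation $R$ on a neighbourhood of the center while remaining symplectic and $C^3$-close to $f'$. This is where one must argue through generating functions, so that symplecticity is automatic, and control the $C^3$-norm of the cut-off in terms of the radius of $W$ and of the eigenvalue displacement introduced in the first step. The symplectic orthogonality of the center and hyperbolic blocks coming from Theorem~\ref{sympeigen} is precisely what permits this localization to be carried out on $\Sigma^c_q$ without disturbing the hyperbolic directions, and keeping $q$ a fixed point throughout guarantees that the final periodic orbit persists after suspension.
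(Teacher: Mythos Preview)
Your plan is essentially the paper's: adjust the linear data with Franks' lemma, linearize the Poincar\'e map near the fixed point, and suspend back to a Hamiltonian so that the center direction is filled with closed orbits. Two points are worth aligning with the paper's execution.

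First, the order of operations. You apply Franks' lemma to force the center eigenvalues to be roots of unity and only afterwards smooth the Hamiltonian; but the smoothing step moves the periodic orbit and its spectrum slightly, so the eigenvalues need not remain exact roots of unity when you reach the linearization stage. The paper avoids this by smoothing first (Theorem~\ref{Pasting}), then using Franks' lemma only to restore non-hyperbolicity if it was lost, then linearizing, and finally making the unit-circle eigenvalues rational \emph{after} the map is already linear (so this last perturbation is again a linear one, realized through Theorem~\ref{Suspension}).

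Second, the linearization itself. You propose to cut off a generating function ``inside the center submanifold $\Sigma^c_p$''; but before linearization there is no canonical invariant center submanifold to restrict to, and in any case you need a symplectomorphism of the full section $\Sigma$, not just of a center leaf. The paper sidesteps this by invoking the Weak pasting lemma for symplectomorphisms \cite[Lemma~3.9]{AM}, which replaces the entire Poincar\'e map $f_0$ by its derivative $Df_0(q)$ on a full neighbourhood of $q$ in $\Sigma$, keeping the perturbation $C^\infty$-small. Once the return map is linear, the center eigenspace $\mathcal N^c_q$ is automatically an invariant symplectic subspace, and $\Sigma^c_q$ is simply its image under the Darboux chart. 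Your instinct that this is the genuine technical step is correct; the point is that it is already available in the literature and should be applied to the whole map, not just to its center part.
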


\begin{proof}
Let $(H,e, \mathcal{E}_{H,e})$ be a Hamiltonian system and let $p \in \mathcal{E}_{H,e}$ be a non-hyperbolic periodic point
of period $\pi$.
If $H \in C^{\infty}(M,\R)$, take $H_0=H$, otherwise we use the Pasting lemma for Hamiltonians (Theorem~\ref{Pasting})
to obtain a Hamiltonian
$\overline{H} \in C^{\infty}(M,\R)$
such that $\overline{H}$ is arbitrarily $C^2$-close to $H$ and such that $\overline{H}$
has a periodic point $\overline{p}$, close to $p$, with period $\overline{\pi}$ close to $\pi$.
We observe that $\overline{p}$ may not be the analytic continuation of $p$ and
this is precisely the case when $1$ is an eigenvalue of $\Phi^{\pi}_{H}(p)$.
If $\overline{p}$ is not hyperbolic, take $H_0=\overline{H}$.
If $\overline{p}$ is hyperbolic
then, since $\overline{H}$ is arbitrarily $C^2$-close
to $H$, the distance between the spectrum of
$\Phi^{\overline{\pi}}_{\overline{H}}(\overline{p})$ and $\mathbb{S}^1$ can be taken
arbitrarily close to zero (weak hyperbolicity). Now, we are in position to apply Frank's lemma for Hamiltonians (Theorem~\ref{Frank})
to obtain a new Hamiltonian $H_0 \in C^{\infty}(M,\R)$, $C^2$-close
to $\overline{H}$ and having a non-hyperbolic periodic point $q$ close to $\overline{p}$.

Clearly, the Poincar\'{e} map $f_0$
at $q$, associated to the Hamiltonian flow $X^t_{H_0}$, is a $C^{\infty}$ local symplectomorphism.
In order to go on with the argument and obtain the Hamiltonian $H_1$, the first step is to use the Weak pasting lemma for symplectomorphisms~\cite[Lemma 3.9]{AM} to change the Poincar\'{e} map $f_0$ by its derivative. In this way we get a symplectomorphism $f_1$, arbitrarily $C^{\infty}$-close to $f_0$, such that (in the local canonical coordinates mentioned in \cite{MZ} and given by Darboux's theorem)
$f_1$ is linear and equal to $Df_0$ in a neighbourhood of the periodic non-hyperbolic periodic point $q$.

Next, we use the Hamiltonian suspension theorem (Theorem~\ref{Suspension}) to realize $f_1$, i.e.,
in order to obtain a Hamiltonian $H_1 \in C^\infty(M,\R)$
such that $f_1$ is linear and equal to $Df_0$ in a neighbourhood of the non-hyperbolic periodic point $q$.

Moreover, the existence of an eigenvalue, $\lambda$, with modulus equal to one is associated to a symplectic invariant
two-dimensional subspace $E$ contained in the subspace $\mathcal{N}^c_q$, associated to norm-one eigenvalues.
Furthermore, up to a perturbation using again the Hamiltonian suspension theorem, $\lambda$ can be taken rational, thus creating periodic points
related with $E$.
This argument can be repeated for each norm-one eigenvalue, if necessary (see the proof of~(\cite[Theorem 2]{BRT}) for the details).
This ensures the existence of a Hamiltonian system $(H_1,e_1, \mathcal{E}_{H_1,e_1})$ arbitrarily close to
$(H,e, \mathcal{E}_{H,e})$ such that $q \in \mathcal{E}_{H_1,e_1}$, and of a small neighborhood of $q$
in $\Sigma_q^c$ composed of periodic points.
\end{proof}

\begin{remark}\label{linear}
 Let $(H_1,e_1, \mathcal{E}_{H_1,e_1})$ and $q \in \mathcal{E}_{H_1,e_1}$ be given by Lemma~\ref{continuum}. We observe that the proof of Lemma~\ref{continuum}
guarantees that  $(H_1,e_1, \mathcal{E}_{H_1,e_1})$ is such that the Poincar\'{e} map $f_1$ at $q$ associated to $X^t_{H_1}$ is a linear map (in the local canonical coordinates mentioned above). This fact will be implicitly used in the proofs of \mbox{(a)}, \mbox{(b)} and \mbox{(c)} of Theorem~\ref{thm3}
and in the proof of Proposition~\ref{ML}.
\end{remark}

Now we are in position to prove item \mbox{(a)} of Theorem~\ref{thm3}.

\begin{proof}
Given a robustly topologically stable Hamiltonian system $(H,e, \mathcal{E}_{H,e})$, we prove that all its periodic orbits in
$\mathcal{E}_{H,e}$ are hyperbolic; from
this it follows that $(H,e, \mathcal{E}_{H,e})$ is a Hamiltonian star system.

By contradiction, let us assume that there exists a robustly topologically stable Hamiltonian system $(H,e, \mathcal{E}_{H,e})$
having a non-hyperbolic periodic point $p \in \mathcal{E}_{H,e}$. If follows from Lemma~\ref{continuum} that there exists a robustly topologically stable Hamiltonian system
$(H_1,e_1, \mathcal{E}_{H_1,e_1})$, arbitrarily close to $(H,e, \mathcal{E}_{H,e})$,
and there exists a non-hyperbolic periodic point $q \in \mathcal{E}_{H_1,e_1}$ of $H_1$ for which every point in a  small neighborhood of $q$,
in $\Sigma_q^c$,
is a periodic point of $H_1$.

Finally, we
approximate $(H_1,e_1, \mathcal{E}_{H_1,e_1})$ by $(H_2,e_2, \mathcal{E}_{H_2,e_2})$, also robustly topologically stable, such that $q$ is an hyperbolic periodic point or an isolated $k$-elliptic periodic point (for $H_2$).
This is a contradiction because $(H_2,e_2, \mathcal{E}_{H_2,e_2})$ is semiconjugated to $(H_1,e_1, \mathcal{E}_{H_1,e_1})$, although there is an $H_1$-orbit (different from $q$)
contained in a small neighbourhood of $q$ and the same cannot occur for $H_2$ because $q$ is a hyperbolic
periodic point or an isolated $k$-elliptic periodic point of $H_2$.
\end{proof}

\bigskip

Let us now prove item \mbox{(b)} of Theorem~\ref{thm3}.

\begin{proof}
Given a stably shadowable Hamiltonian system $(H,e, \mathcal{E}_{H,e})$, we prove that all its periodic orbits in
$\mathcal{E}_{H,e}$ are hyperbolic; from
this it follows that $(H,e, \mathcal{E}_{H,e})$ is a Hamiltonian star system.

By contradiction, let us assume that there exists a stably shadowable Hamiltonian system $(H,e, \mathcal{E}_{H,e})$ having a non-hyperbolic periodic point $p \in \mathcal{E}_{H,e}$. It follows from
Lemma~\ref{continuum} that there exists a stably shadowable Hamiltonian system $(H_1,e_1, \mathcal{E}_{H_1,e_1})$, arbitrarily close to $(H,e, \mathcal{E}_{H,e})$,
and there exists a non-hyperbolic periodic point $q \in \mathcal{E}_{H_1,e_1}$ of $H_1$ for which every point, say in a $\xi$-neighborhood of $q$,
in $\Sigma_q^c$,
is a periodic point of $H_1$.

Since $(H_1,e_1, \mathcal{E}_{H_1,e_1})$ has the shadowing property fixing $\epsilon \in (0,\frac{\xi}{4})$, there exist
$\delta \in (0,\epsilon)$ and $T>0$ such that every $(\delta,T)$-pseudo-orbit $((x_i),(t_i))_{i \in \Z}$ is
$\epsilon$-shadowed by some orbit of $H_1$.

Let $x_0=q$.
Take $y\in \Sigma_q^c$ such that $d(q,y)=\frac{3\xi}{4}>2\epsilon$ and fix $\delta \in (0,\epsilon)$ sufficiently
small. We construct a bi-infinite sequence of points $((x_i), (t_i))_{i \in \Z}$ with $x_i \in \Sigma_q^c$
such that $((x_i), (t_i))_{i \in \Z}$ is a $(\delta,T)$-pseudo orbit
for some $T>0$.
For fixed $k \in \Z$, let $x_k=y$. There exist $x_i \in \Sigma_q^c$, $i \in \Z$, such that:
\begin{enumerate}
 \item[$\bullet$] $x_i=x_0$ and $t_i=\pi$ for $i \leq 0$;
\item[$\bullet$] $d(x_{i},x_{i-1}) < \delta$ and $t_i=\pi$ for $1 \leq i \leq k$;
\item[$\bullet$] $x_i=x_k$ and $t_i=\pi$ for $i >k$.
\end{enumerate}

Observe that we are considering that the return time at the transversal section is the same and equal to $\pi$. Clearly, it is not exactly equal to $\pi$, however
it is as close to $\pi$ as we want by just decreasing the $\xi$-neighborhood.
Therefore, $((x_i), (\pi))_{i \in \Z}$ is a $(\delta,T)$-pseudo-orbit for some $T>0$ such that $\pi \geq T$.

By the shadowing property, there is a point $z \in \mathcal{E}_{H,e}$ and a reparametrization $\alpha \in \mbox{Rep}(\epsilon)$
such that
$d(X_{H_1}^{\alpha(t)}(z),x_0 \star t)<\epsilon$, for every $t \in \R$.
Hence, cannot have forward/backward expansion and so,
$z \in \Sigma_q^c$. However, since $(H_1,e_1, \mathcal{E}_{H_1,e_1})$ has the shadowing property and $x_0 \star k \pi=x_k$, we have that
$$
d(q,y) \leq d(q,X_{H_1}^{\alpha(k \pi)}(z))+d(X_{H_1}^{\alpha(k \pi)}(z),x_k)<2 \epsilon,
$$
which is a contradiction.
\end{proof}

Now we prove item \mbox{(c)} of Theorem~\ref{thm3}.

\begin{proof}
Given a stably expansive Hamiltonian system $(H,e, \mathcal{E}_{H,e})$, we prove that all its periodic orbits are hyperbolic; from
this it follows that $(H,e, \mathcal{E}_{H,e})$ is a Hamiltonian star system.

By contradiction, let us assume that there exists a stably expansive Hamiltonian system $(H,e, \mathcal{E}_{H,e})$ having a non-hyperbolic periodic point $p \in \mathcal{E}_{H,e}$. If follows from Lemma~\ref{continuum} that there exists a stably expansive Hamiltonian system $(H_1,e_1, \mathcal{E}_{H_1,e_1})$, arbitrarily close to $(H,e, \mathcal{E}_{H,e})$,
and there exists a non-hyperbolic periodic point $q \in \mathcal{E}_{H_1,e_1}$ of period $\pi$ of $H_1$ for which every point in a small neighborhood of $q$,
in $\Sigma_q^c$,
is a periodic point of $H_1$ with period close to $\pi$.

Finally, we just have to pick two points $x,y \in \Sigma_q^c$ sufficiently close in order to obtain
$d(X^t_{H_1}(x),X^{t}_{H_1}(y))<\delta$ for all $t \in \Rr$. It is clear that
$(H_1,e_1, \mathcal{E}_{H_1,e_1})$ can not be expansive which is a contradiction and Theorem~\ref{thm3}\,\mbox{(c)} is proved.
\end{proof}

To prove Theorem~\ref{thm3}\,\mbox{(d)}, we shall start by deducing some consequences of the weak specification property. Let us first recall that
a compact energy hypersurface $\mathcal{E}_{H,e}$ of a Hamiltonian $(H,e, \mathcal{E}_{H,e})$ is {\em topologically mixing} if, for any open and non-empty
subsets of $\mathcal{E}_{H,e}$, say $U$ and $V$, there is $\tau \in \Rr$ such that
$X_H^t(U) \cap V \neq \emptyset$, for any $t \geq \tau$. The first lemma is a particular case of~\cite[Lemma 3.1]{ASS}.

\begin{lemma}\label{mixing} If a Hamiltonian system $(H,e, \mathcal{E}_{H,e})$ has the weak specification property, then
$\mathcal{E}_{H,e}$ is topologically mixing.
\end{lemma}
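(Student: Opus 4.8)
The plan is to prove that the weak specification property forces topological mixing by exploiting the freedom in the switching time that the specification property grants us. Recall that weak specification means: for any $\epsilon>0$ there is a spacing constant $K=K(\epsilon)$ such that any $K$-spaced weak specification $S=(\tau,P)$ with $\tau=\{I_1,I_2\}$ is $\epsilon$-shadowed by a single point of $\mathcal{E}_{H,e}$. The point is that $K(\epsilon)$ is uniform, depending only on $\epsilon$, and the two orbit segments may be placed at any times we like provided the gap between them exceeds $K$.

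First I would fix open non-empty sets $U,V\subset\mathcal{E}_{H,e}$ and pick $\epsilon>0$ small enough that the $\epsilon$-balls $B_\epsilon(u)\subset U$ and $B_\epsilon(v)\subset V$ for some chosen centres $u\in U$, $v\in V$. Let $K=K(\epsilon)$ be the associated spacing constant. The goal is to produce $\tau\in\Rr$ so that $X_H^t(U)\cap V\neq\emptyset$ for every $t\geq\tau$; equivalently, for each such $t$ I must find a point $x$ with $x\in U$ and $X_H^t(x)\in V$. I would build, for an arbitrary target time $t\geq \tau:=K$, a weak specification with two singleton (degenerate) intervals: take $I_1=[0,0]$ with $P(0)=u$, and $I_2=[t,t]$ with $P(t)=v$. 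Since $t\geq K$, the gap $a_2-b_1=t\geq K$ makes $S$ a $K$-spaced weak specification. The consistency condition $X_H^{t_2}(P(t_1))=X_H^{t_1}(P(t_2))$ is vacuous on singleton intervals, so $S$ is a legitimate specification. Applying weak specification, there is $x\in\mathcal{E}_{H,e}$ with $d(X_H^0(x),u)<\epsilon$ and $d(X_H^t(x),v)<\epsilon$; that is, $x\in B_\epsilon(u)\subset U$ and $X_H^t(x)\in B_\epsilon(v)\subset V$. Hence $X_H^t(U)\cap V\neq\emptyset$ for every $t\geq K$, which is exactly topological mixing with $\tau=K$.

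The main subtlety, and the step I would treat most carefully, is whether the definition of specification genuinely permits degenerate (length-zero) intervals and arbitrary placement of the right endpoint $a_2=t$. If the paper's convention requires intervals of positive length, I would instead use short intervals $I_1=[0,\eta]$ and $I_2=[t,t+\eta]$ for small $\eta>0$, with $P$ defined on each interval by following the orbit of the chosen centre (so that the intra-interval consistency relation $X_H^{t_2}(P(t_1))=X_H^{t_1}(P(t_2))$ holds automatically, as $P(s)=X_H^{s-a_i}(\cdot)$ on $I_i$). One then arranges $t\geq K+\eta$ so the spacing condition $a_2\geq b_1+K$ holds, and reads off the same conclusion from the shadowing point. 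In either formulation the essential mechanism is identical: because $K(\epsilon)$ is independent of the placement of the two blocks, we can slide the second block out to any sufficiently large time $t$ and still glue the two prescribed behaviours together by one genuine orbit.

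Since the statement is cited as a particular case of \cite[Lemma 3.1]{ASS}, I would note that the argument above is the natural specialization to two orbit blocks, and no new input beyond the uniformity of $K(\epsilon)$ is required; the proof is therefore short and the only real work is the bookkeeping to confirm that the constructed two-interval datum is a valid $K$-spaced weak specification.
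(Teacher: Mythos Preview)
Your argument is correct and is exactly the standard proof; note that the paper does not supply its own proof of this lemma but simply cites \cite[Lemma~3.1]{ASS}, and your construction with $I_1=\{0\}$, $I_2=\{t\}$ (or short positive-length intervals if one prefers) and $\tau=K(\epsilon)$ is precisely the argument that citation encapsulates. No additional ingredient is needed.
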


Let $(H,e, \mathcal{E}_{H,e})$ be a Hamiltonian system and let
$p \in \mathcal{E}_{H,e}$ be a periodic point of period $\pi$ such that the spectrum of $\Phi_H^{\pi}(p)$ outside the unit circle is a non-empty set. Let $\Ss^0(H,p)$ stand for the spectrum outside the unit circle. Observe that this set contains both eigenvalues with modulus greater than one and  smaller than one.

We define the strong stable and stable manifolds of $p$ as:
$$\displaystyle{W^{ss}(p)=\{y \in \mathcal{E}_{H,e}: \lim_{t \rightarrow +\infty} d(X_H^t(y),X_H^t(p))=0\}}$$
and
$$\displaystyle{W^{s}(\mathcal{O}(p))=\bigcup_{t \in \Rr} W^{ss}(X^t_H(p))},$$
where $\mathcal{O}(x)$ stands for the orbit of $x$.
For small $\epsilon>0$, the local strong stable manifold is defined as
$$\displaystyle{W^{ss}_{\epsilon}(p)=\{y \in \mathcal{E}_{H,e};  d(X_H^t(y),X_H^t(p))<\epsilon \,\, \text{if} \,\, t \geq 0\}}.$$
By the stable manifold theorem, there exists an $\epsilon=\epsilon(p)>0$ such that
$$\displaystyle{W^{ss}(p)=\bigcup_{t \geq 0} X_H^{-t}(W^{ss}_{\epsilon}(X^t_H(p)))}.$$
Analogous definitions hold for unstable manifolds.

Next result is an adaptation of~\cite[Theorem 3.3]{ASS}.

\begin{lemma}\label{wuwsnotempty} If a Hamiltonian system $(H,e, \mathcal{E}_{H,e})$ has the weak specification property, then
for every distinct periodic points $p, q \in \mathcal{E}_{H,e}$
such that $\Ss^0(H,p) \neq \emptyset$ and $\Ss^0(H,q) \neq \emptyset$, we have that
$W^u(\mathcal{O}(p)) \cap W^s(\mathcal{O}(q)) \neq \emptyset.$
\end{lemma}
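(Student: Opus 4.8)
\textbf{Plan for the proof of Lemma~\ref{wuwsnotempty}.}
The plan is to exploit the weak specification property to produce, for each small $\epsilon>0$, a single orbit that first shadows a long segment of the orbit of $p$ and then, after a fixed spacing $K(\epsilon)$, shadows a long segment of the orbit of $q$. The point is that a trajectory which remains $\epsilon$-close to $\mathcal{O}(p)$ for a long time is forced into the local unstable manifold $W^u_\epsilon(\mathcal{O}(p))$, while one that will remain $\epsilon$-close to $\mathcal{O}(q)$ for a long time in forward time must lie in $W^s_\epsilon(\mathcal{O}(q))$; the shadowing point therefore witnesses a nonempty intersection. Because $\Ss^0(H,p)\neq\emptyset$ and $\Ss^0(H,q)\neq\emptyset$, the local stable and unstable manifolds of the respective orbits are genuinely nontrivial, so this forcing has content.

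First I would fix a small $\epsilon>0$ and invoke the stable manifold theorem to obtain local strong stable/unstable manifolds $W^{ss}_\epsilon$, $W^{uu}_\epsilon$ of sizes controlled by $\epsilon(p)$ and $\epsilon(q)$, together with the standard fact that staying $\epsilon$-close to the orbit of a periodic point for all forward (resp.\ backward) time places a point on the local stable (resp.\ unstable) manifold. Next I would set up a $K(\epsilon)$-spaced weak specification $S=(\tau,P)$ with $\tau=\{I_1,I_2\}$, where on $I_1=[-N,0]$ the map $P$ follows the orbit of $p$ and on $I_2=[K,K+N]$ it follows the orbit of $q$, with $N$ large and $K=K(\epsilon)$ the spacing constant furnished by the weak specification property. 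Applying that property yields a point $x\in\mathcal{E}_{H,e}$ which $\epsilon$-shadows $S$, i.e. $d(X_H^t(x),P(t))<\epsilon$ on $I_1\cup I_2$. Letting $N\to\infty$ along a sequence and extracting a convergent subsequence of the associated shadowing points (using compactness of $\mathcal{E}_{H,e}$), I would pass to a limit point $x_\infty$ whose whole backward orbit stays $\epsilon$-close to $\mathcal{O}(p)$ and a suitable forward translate $X_H^{K}(x_\infty)$ stays $\epsilon$-close to $\mathcal{O}(q)$; this places $x_\infty\in W^u_\epsilon(\mathcal{O}(p))$ and $X_H^K(x_\infty)\in W^s_\epsilon(\mathcal{O}(q))$, giving a point in $W^u(\mathcal{O}(p))\cap W^s(\mathcal{O}(q))$ after flowing.

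The main obstacle I anticipate is the flow-reparametrization bookkeeping: unlike the diffeomorphism case treated in~\cite[Theorem 3.3]{ASS}, the specification intervals are continuous-time, the spacing $K(\epsilon)$ must be held fixed while $N\to\infty$, and one must ensure that the limiting point does not simply drift onto the orbit of $p$ or $q$ themselves but genuinely lies on the transverse manifolds. I would handle this by first normalizing the periodic orbits so that the shadowing estimates on $I_1$ and $I_2$ are taken at integer multiples of the respective periods $\pi_p,\pi_q$, thereby reducing the continuous-time closeness to closeness along the return maps, and then quoting the contraction/expansion coming from $\Ss^0(H,p)$ and $\Ss^0(H,q)$ to conclude that the $\epsilon$-shadowing on an arbitrarily long half-line forces membership in $W^u_\epsilon(\mathcal{O}(p))$ and $W^s_\epsilon(\mathcal{O}(q))$ respectively. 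The compactness-limit argument is the step requiring the most care, since one must verify that the chosen subsequence of shadowing points converges and that the forward and backward closeness are preserved in the limit uniformly on the noncompact time ranges.
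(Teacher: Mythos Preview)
Your approach is essentially the same as the paper's: build a $K(\epsilon)$-spaced weak specification following $\mathcal{O}(p)$ on one interval and $\mathcal{O}(q)$ on the other, shadow, let the length go to infinity, and use compactness to extract a limit lying in $W^{uu}_\epsilon(p)$ whose $X_H^K$-image lies in $W^{ss}_\epsilon(q)$. The paper parametrises the intervals as $I_1=[0,t]$, $I_2=[K+t,K+2t]$ with $P(s)=X_H^{s-t}(p)$ and $P(s)=X_H^{s-K-t}(q)$, then sets $y_t=X_H^t(x_t)$ and passes to a limit $y$; this is a time-shift of your choice $I_1=[-N,0]$, $I_2=[K,K+N]$.

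The only substantive comment is that you are anticipating difficulties that are not there. There is no reparametrisation in the weak specification property as defined in \S\ref{spec}: the shadowing condition is the plain inequality $d(X_H^t(x),P(t))<\epsilon$ on $I_1\cup I_2$, with no time-change $\alpha$. Consequently your proposed reduction to integer multiples of the periods $\pi_p,\pi_q$ is unnecessary, and so is the worry about the limit ``drifting onto the orbit'' --- the paper simply anchors $P$ so that $P(t)=p$ and $P(K+t)=q$, and then the limit inequalities $d(X_H^{-u}(y),X_H^{-u}(p))\le\epsilon$ and $d(X_H^{u}(X_H^K(y)),X_H^{u}(q))\le\epsilon$ for all $u\ge0$ are immediate from continuity, placing $y$ directly in $W^{uu}_\epsilon(p)$ and $X_H^K(y)$ in $W^{ss}_\epsilon(q)$ by the set-theoretic definitions given just before the lemma. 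Strip out the reparametrisation bookkeeping and your argument is exactly the paper's.
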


\begin{proof}
We denote by $\epsilon(p)$ the size of the local strong unstable manifold of $p$ and by
$\epsilon(q)$ the size of the local strong stable manifold of $q$.
Let $\epsilon=\min\{\epsilon(p),\epsilon(q)\}$, and let $K=K(\epsilon)$ be given by the weak specification property.
If $t>0$ then take $I_1=[0,t]$ and $I_2=[K+t,K+2t]$. Now define
$P(s)=X^{s-t}_H(p)$ if $s \in I_1$ and $P(s)=X^{s-K-t}_H(q)$ if $s \in I_2$. Note that this is a
$K$-spaced weak specification.

So, there exists $x_t$ which shadows this weak specification:
$$d(X_H^{s}(x_t),P(s)) \leq \epsilon \,\, \text{if} \,\, s \in I_1 \cup I_2.$$
Using the change of variable $u=t-s$, for every $u \in [0,t]$ we have:
$$d(X_H^{-u}(X_H^t(x_t)),X_H^{-u}(p))= d(X_H^{t-u}(x_t),X_H^{-u}(p)) \leq \epsilon$$
and using $u=s-K-t$, for every $u \in [0,t]$ we have
$$d(X_H^{u}(X_H^{K+t}(x_t)),X_H^{u}(q)) \leq \epsilon.$$
If $y_t=X_H^t(x_t)$ then we can assume that $y_t \rightarrow y$. And taking limits in the previous inequalities we obtain
$$d(X_H^{-u}(y),X_H^{-u}(p)) \leq \epsilon \,\, \text{for every} \,\, u \geq 0, \,\, \text{and}$$
$$d(X_H^{u}(X_H^K(y)),X_H^{u}(q)) \leq \epsilon \,\, \text{for every} \,\, u \geq 0.$$
The first one says that $y \in W^{uu}_{\epsilon}(p) \subset W^{u}(\mathcal{O}(p))$ and the second one says that
$X^K_H(y) \in W^{ss}_{\epsilon}(q)$, hence $y \in W^{s}(\mathcal{O}(p)).$
\end{proof}

\begin{proposition}\label{sph}
If $(H,e, \mathcal{E}_{H,e})$ satisfies the stable weak specification property, then $\mathcal{E}_{H,e}$ is partially hyperbolic.
In particular, due to Remark~\ref{Mane}, if $n=2$, then $\mathcal{E}_{H,e}$ is hyperbolic.
\end{proposition}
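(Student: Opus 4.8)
The plan is to produce, for the transversal linear Poincar\'e flow $\Phi_H^t$, a dominated splitting over the whole energy hypersurface $\mathcal{E}_{H,e}$; by Remark~\ref{Mane} any such splitting is automatically partially hyperbolic, and when $n=2$ the normal fibre $\mathcal{N}$ is two-dimensional, so the two hyperbolic sub-bundles exhaust it and the system is Anosov. I would argue by contraposition, exploiting that the stable weak specification hypothesis furnishes an open $C^2$-neighbourhood $\mathcal{V}$ of $(H,e,\mathcal{E}_{H,e})$ every member of which has the weak specification property and is therefore topologically mixing by Lemma~\ref{mixing}, hence topologically transitive. Since each energy hypersurface is a compact, boundaryless manifold of dimension $2n-1\geq 3$ with no isolated points, transitivity yields a point with dense orbit. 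The whole argument reduces to showing that the absence of a dominated splitting is incompatible with the existence of such a dense orbit somewhere in $\mathcal{V}$.

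So assume that $\mathcal{E}_{H,e}$ is not partially hyperbolic, i.e. $\Phi_H^t$ admits no dominated splitting. Using that partial hyperbolicity is a $C^2$-open property I place myself in the far from partially hyperbolic regime and apply Theorem~\ref{thm4}: there is a $C^\infty$-Hamiltonian $\tilde{H}$, arbitrarily $C^2$-close to $H$, so that $(\tilde{H},\tilde{e},\mathcal{E}_{\tilde{H},\tilde{e}})\in\mathcal{V}$, possessing a closed elliptic orbit, that is, a periodic point $p$ all of whose transversal eigenvalues lie on $\Ss^1$. In particular $p$ is totally elliptic, so its central subspace satisfies $\mathcal{N}^c_p=\mathcal{N}_p$.

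Next I feed this orbit into the explosion mechanism. By Lemma~\ref{continuum} there is a Hamiltonian system $(H_1,e_1,\mathcal{E}_{H_1,e_1})\in\mathcal{V}$, arbitrarily close to $(\tilde{H},\tilde{e},\mathcal{E}_{\tilde{H},\tilde{e}})$, with a non-hyperbolic periodic point $q$ such that every point of a neighbourhood of $q$ in $\Sigma_q^c$ is periodic for $H_1$. Since $q$ is again totally elliptic, $\mathcal{N}^c_q=\mathcal{N}_q$ and this neighbourhood is open in the transversal section $\Sigma$; saturating it by the flow gives an open set $W\subset\mathcal{E}_{H_1,e_1}$ consisting entirely of periodic points. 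But $(H_1,e_1,\mathcal{E}_{H_1,e_1})\in\mathcal{V}$, so $\mathcal{E}_{H_1,e_1}$ is topologically transitive and carries a dense orbit; that orbit must enter the open set $W$, forcing a periodic point to have a dense, hence non-closed, orbit, which is absurd. This contradiction shows that $\Phi_H^t$ does admit a dominated splitting, and the proof closes as announced in the first paragraph.

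The main obstacle is the second step: extracting a genuinely totally elliptic closed orbit in $\mathcal{V}$ from the sole failure of domination. This is the content of the continuous-time Saghin--Xia construction underlying Theorem~\ref{thm4}, and it is where the symplectic Franks lemma (Theorem~\ref{Frank}) together with the suspension and pasting lemmas of \S\ref{hamilperturbsection} are indispensable: one must simultaneously rotate the weakly dominated reciprocal eigenvalue pairs provided by Theorem~\ref{sympeigen} onto $\Ss^1$ and realise the resulting linear model by an honest Hamiltonian. A secondary delicate point is the reduction from ``not partially hyperbolic'' to ``far from partially hyperbolic'', which rests on the openness of partial hyperbolicity and on the fact that, along a $C^2$-convergent sequence of partially hyperbolic systems whose limit is not, the domination constants must blow up, so that arbitrarily close to the limit one again finds the weakly dominated orbits needed to trigger the elliptic construction.
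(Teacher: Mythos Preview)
Your proof is correct and takes a genuinely different route from the paper's. Both arguments begin identically: assuming $\mathcal{E}_{H,e}$ is not partially hyperbolic, Theorem~\ref{thm4} produces a nearby system $(\tilde H,\tilde e,\mathcal{E}_{\tilde H,\tilde e})\in\mathcal V$ carrying a totally elliptic closed orbit. The divergence is in how one reaches a contradiction with Lemma~\ref{mixing}. The paper invokes an external KAM-type criterium (\cite[Corollary~9]{V}): a totally elliptic closed orbit is, after a further generic perturbation, surrounded by invariant Lagrangian tori that confine nearby trajectories, so the perturbed hypersurface cannot be topologically mixing. You avoid KAM altogether: you feed the elliptic point into Lemma~\ref{continuum} (tacitly using Remark~\ref{elliptic open} to ensure the resulting $q$ is still totally elliptic, so that $\Sigma_q^c=\Sigma_q$) and obtain a system in $\mathcal V$ with an open invariant set $W$ filled by closed orbits; transitivity, which follows from Lemma~\ref{mixing}, then forces a dense orbit into $W$, an absurdity. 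Your route is more self-contained---it stays entirely within the perturbation toolkit of \S\ref{hamilperturbsection}---and in fact mirrors the paper's own later argument for Proposition~\ref{ML} on weak shadowing; the paper's route here is shorter but outsources the decisive step to Vivier's KAM result. Both proofs leave the reduction from ``not partially hyperbolic'' to ``far from partially hyperbolic'' at the same informal level, so your discussion of that point is no more and no less rigorous than the paper's.
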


\begin{proof}
The proof is by contradiction; let us assume there exists a Hamiltonian system $(H,e, \mathcal{E}_{H,e})$
that has the
stable weak specification property and such that
$\mathcal{E}_{H,e}$ is not partially hyperbolic.
Then, by Theorem~\ref{thm4}, there exists a $C^2$-close $C^\infty$-Hamiltonian $H_0$ with an elliptic closed orbit
on a nearby regular energy hypersurface $\mathcal{E}_{{H_0},{e_0}}$.
This invalidates  the chance of mixing for  $\mathcal{E}_{{H_0},{e_0}}$ according to a KAM-type criterium (see~\cite[Corollary 9]{V}),
which contradicts Lemma~\ref{mixing}.
\end{proof}

We recall that a Hamiltonian system $(H,e, \mathcal{E}_{H,e})$ is a \textit{Kupka-Smale Hamiltonian system} if (see~\cite{R0}):
\begin{enumerate}
 \item the union of the hyperbolic and $k$-elliptic closed orbits ($1 \leq k \leq n-1$) in $\mathcal{E}_{H,e}$ is dense in $\mathcal{E}_{H,e}$;
\item  the intersection of $W^s(\mathcal{O}(p))$ with $W^u(\mathcal{O}(q))$ is transversal, for any closed orbits $\mathcal{O}(p)$ and
$\mathcal{O}(q)$.
\end{enumerate}

\begin{lemma}\label{hypnonhyp} Let $(H,e, \mathcal{E}_{H,e})$ be a Hamiltonian system satisfying the stable weak specification property and let $\mathcal{V}$ be a neighbourhood  of $(H,e, \mathcal{E}_{H,e})$ such that any $({H_0},{e_0}, \mathcal{E}_{{H_0},{e_0}}) \in \mathcal{V}$ has the weak specification property. Then, every Kupka-Smale
Hamiltonian system in $\mathcal{V}$ has all periodic points of hyperbolic type.
\end{lemma}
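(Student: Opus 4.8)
The plan is to argue by contradiction, leveraging the topological mixing that weak specification provides together with the stable manifold intersections guaranteed by Lemma~\ref{wuwsnotempty}. Suppose $(H_0,e_0,\mathcal{E}_{H_0,e_0}) \in \mathcal{V}$ is a Kupka-Smale system possessing a non-hyperbolic periodic point $q$. Since the system is Kupka-Smale, $q$ lies on a $k$-elliptic closed orbit for some $1 \leq k \leq n-1$, so the transversal linear Poincar\'e flow $\Phi_{H_0}^{\pi}(q)$ has $2k$ non-real eigenvalues of norm one and the remaining $2(n-1-k)$ eigenvalues off the unit circle. The first step is to record that, because $q$ is $k$-elliptic with $k \leq n-1$, by the Symplectic eigenvalue theorem (Theorem~\ref{sympeigen}) the spectrum off the unit circle $\Ss^0(H_0,q)$ is nonempty precisely when $k < n-1$, and comes in reciprocal pairs so that both a contracting and an expanding direction are present whenever it is nonempty.

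Next I would exploit the density of closed orbits. By the Kupka-Smale hypothesis, hyperbolic and $k$-elliptic closed orbits are dense in $\mathcal{E}_{H_0,e_0}$; in particular I may select a hyperbolic periodic point $p$ distinct from $q$, for which $\Ss^0(H_0,p)$ is automatically nonempty (a hyperbolic orbit has its whole spectrum off the unit circle, with nontrivial stable and unstable directions since the index is fixed at $n-1$). Assuming $\Ss^0(H_0,q) \neq \emptyset$ as well, Lemma~\ref{wuwsnotempty} then yields both $W^u(\mathcal{O}(p)) \cap W^s(\mathcal{O}(q)) \neq \emptyset$ and, applying it with the roles reversed, $W^u(\mathcal{O}(q)) \cap W^s(\mathcal{O}(p)) \neq \emptyset$. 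These intersections, however, must be transversal by the second Kupka-Smale condition. The heart of the contradiction is dimensional: at a transversal heteroclinic intersection inside the $(2n-2)$-dimensional normal bundle one needs $\dim W^u(\mathcal{O}(p)) + \dim W^s(\mathcal{O}(q)) \geq 2n-1$ (counting the flow direction), and a symmetric inequality for the reversed intersection. Because $q$ is $k$-elliptic, its strong stable and unstable manifolds have dimension only $n-1-k < n-1$, which is strictly too small for both transversal intersections with the hyperbolic $p$ (whose stable and unstable manifolds each have dimension $n-1$) to hold simultaneously. This forces the central elliptic block to be trivial, i.e. $q$ must in fact be hyperbolic.

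I expect the main obstacle to be the degenerate case $\Ss^0(H_0,q) = \emptyset$, which occurs exactly when $q$ is \emph{totally} elliptic ($k = n-1$), so that Lemma~\ref{wuwsnotempty} cannot be invoked for the pair containing $q$. To handle this I would argue directly from mixing (Lemma~\ref{mixing}): a totally elliptic orbit carries, by the KAM-type obstruction already used in Proposition~\ref{sph} (see~\cite[Corollary~9]{V}), an invariant neighbourhood bounded by invariant tori that blocks the topological mixing required by the weak specification property, giving a contradiction. Thus in every case a non-hyperbolic $q$ is impossible. The delicate points to pin down are the exact dimension count for the transversal heteroclinic intersections and the careful bookkeeping of the reciprocal-pair structure of the spectrum, ensuring that the elliptic directions genuinely reduce the dimensions of $W^s(\mathcal{O}(q))$ and $W^u(\mathcal{O}(q))$ below the threshold needed for transversality against the full-dimensional invariant manifolds of a hyperbolic orbit.
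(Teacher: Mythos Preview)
Your proposal is correct and follows essentially the same route as the paper: derive a contradiction by combining the nonemptiness of $W^u(\mathcal{O}(p)) \cap W^s(\mathcal{O}(q))$ from Lemma~\ref{wuwsnotempty} with the Kupka--Smale transversality, which forces the intersection to be empty once the non-hyperbolic orbit lowers the dimension of its invariant manifolds. Two small remarks. First, you only need \emph{one} of the two intersections: if $q$ is hyperbolic and $p$ is $k$-elliptic with $k\geq 1$, then already $\dim W^u(\mathcal{O}(p)) + \dim W^s(\mathcal{O}(q)) = (n-k)+n < 2n$ in $\mathcal{E}_{H_0,e_0}$, and since any nonempty intersection is flow-invariant (hence at least one-dimensional), transversality forces it to be empty; the reversed intersection is not needed. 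Second, you are right to flag the totally elliptic case $k=n-1$, where $\Ss^0(H_0,q)=\emptyset$ and Lemma~\ref{wuwsnotempty} does not apply; the paper's proof is silent on this point. Your KAM/mixing argument handles it, though note it requires enough regularity---a cleaner way is simply to invoke Proposition~\ref{sph}, which already rules out totally elliptic orbits for every system in (a possibly smaller) $\mathcal{V}$ via partial hyperbolicity.
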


 \begin{proof}
Let $\mathcal{V}$ be a neighbourhood of
$(H,e, \mathcal{E}_{H,e})$ as in the hypothesis of the lemma.
Let $p, q \in \mathcal{E}_{{H_0},{e_0}}$ be two periodic points of a Kupka-Smale Hamiltonian system $({H_0},{e_0}, \mathcal{E}_{{H_0},{e_0}}) \in \mathcal{V}$ and suppose, by contradiction, that $p$ is a non-hyperbolic periodic point. Then, $\dim W^u(\mathcal{O}(p)) < (2n-2)/2$ and, therefore, $\dim W^u(\mathcal{O}(p)) + \dim W^s(\mathcal{O}(q)) < 2n-2$. Since, the stable/unstable manifolds intersect in a tranversal way, we must have
$W^u(\mathcal{O}(p)) \cap W^s(\mathcal{O}(q)) = \emptyset.$ But this contradicts Lemma~\ref{wuwsnotempty}.
 \end{proof}

\begin{remark}\label{elliptic open}
Fix some Hamiltonian system $(H,e, \mathcal{E}_{H,e})$ such that $\mathcal{E}_{H,e}$ has a $k$-elliptic closed orbit, $1 \leq k \leq n-1$. We get that the analytic continuation of $\mathcal{E}_{H,e}$, $\mathcal{E}_{\tilde{H},\tilde{e}}$, has still a
$k$-elliptic closed orbit (its analytic continuation). Therefore, the set of Hamiltonians exhibiting $k$-elliptic
($1 \leq k \leq n-1$) closed orbits is open in $C^2(M,\mathbb{R})$ (see e.g. \cite{R0}).
\end{remark}

\begin{lemma}\label{keySP}
Let $(H,e, \mathcal{E}_{H,e})$ be a Hamiltonian system and let $p \in \mathcal{E}_{H,e}$ be a non-hyperbolic periodic point.
Then, there exists  $({H_0},{e_0}, \mathcal{E}_{{H_0},{e_0}})$, arbitrarily close to $(H,e, \mathcal{E}_{H,e})$,
such that $({H_0},{e_0}, \mathcal{E}_{{H_0},{e_0}})$ is a Kupka-Smale Hamiltonion system exhibiting $1$-elliptic periodic points.
\end{lemma}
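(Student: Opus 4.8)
The plan is to produce, near the given system, a single $1$-elliptic closed orbit, and then to invoke the openness of this feature together with the genericity of the Kupka-Smale property to upgrade the system to a Kupka-Smale one that still carries such an orbit. Thus the proof splits into two parts: (i) manufacturing a $1$-elliptic periodic point by a $C^2$-small perturbation of $H$, and (ii) a soft argument turning this into a Kupka-Smale system.

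For part (i) I would first apply the explosion lemma (Lemma~\ref{continuum}) to the non-hyperbolic point $p$. This yields a system $(H_1,e_1,\mathcal{E}_{H_1,e_1})$ arbitrarily close to $(H,e,\mathcal{E}_{H,e})$, with a non-hyperbolic periodic point $q$ whose Poincar\'{e} map is linear in Darboux coordinates (Remark~\ref{linear}) and whose central eigenvalues are roots of unity, so that the time-$\pi$ map $\Phi_{H_1}^{\pi}(q)$ restricted to $\mathcal{N}^c_q$ has finite order. Passing to a suitable iterate of the closed orbit, the central block of the transversal linear Poincar\'{e} map becomes the identity while the complementary block stays hyperbolic. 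I would then prescribe, on this central block, a symplectic linear map having exactly one simple non-real pair on $\mathbb{S}^1$ and all remaining eigenvalues off $\mathbb{S}^1$ --- concretely the time-one map of an infinitesimally symplectic generator with a single conjugate purely imaginary eigenvalue pair and the rest real and nonzero; the Symplectic eigenvalue theorem (Theorem~\ref{sympeigen}) guarantees the prescribed map is symplectic. Realizing this prescribed linear datum by a genuine Hamiltonian --- via Franks' lemma (Theorem~\ref{Frank}) on an injective arc, or, since we are working with an iterate, via the Hamiltonian suspension theorem (Theorem~\ref{Suspension}) applied to the Poincar\'{e} map as in the proof of \cite[Theorem 2]{BRT} --- produces $H_2$, arbitrarily $C^2$-close to $H$, for which $q$ is a $1$-elliptic periodic point.

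The main obstacle is exactly this manufacturing step, and it is a rigidity phenomenon of symplectic linear algebra (Krein theory): an elliptic eigenvalue of \emph{definite} Krein signature cannot be displaced from $\mathbb{S}^1$ by any small symplectic perturbation, so one cannot directly thin out a $k$-elliptic ($k\ge 2$) configuration down to a $1$-elliptic one. This is precisely why the explosion lemma is invoked first: by forcing the central eigenvalues to be roots of unity and then passing to an iterate, the central block is degenerated to the identity, whose single eigenvalue $+1$ is maximally indefinite; only then do small symplectic perturbations with eigenvalues off $\mathbb{S}^1$ exist nearby, allowing one elliptic pair to be retained while the other central directions are split into hyperbolic ones. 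Some care is still needed because this splitting is a higher-order effect --- the averaged (first-order) perturbation along the resonant orbit preserves ellipticity --- and because Franks' lemma requires an injective arc, which is the reason for routing the realization through the suspension of the Poincar\'{e} map.

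For part (ii), by Remark~\ref{elliptic open} the existence of a $1$-elliptic closed orbit is a $C^2$-open property, so there is a $C^2$-neighbourhood of $H_2$ every element of which carries the analytic continuation of the $1$-elliptic orbit. Since Kupka-Smale Hamiltonian systems form a residual, hence dense, subset of $C^2(M,\mathbb{R})$ (see~\cite{R0}), I would pick a Kupka-Smale $H_0$ inside this neighbourhood. Then $H_0$ is arbitrarily close to $H$, is Kupka-Smale, and still exhibits a $1$-elliptic periodic point, which is the required system.
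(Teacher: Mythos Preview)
Your part (ii) coincides with the paper's. Part (i), however, is handled very differently in the paper: it does \emph{not} construct the $1$-elliptic orbit by hand. The paper simply observes that the presence of the non-hyperbolic point $p$ means $(H,e,\mathcal{E}_{H,e})$ is not Anosov, invokes that the boundary of the set of Anosov Hamiltonian systems has no isolated points (\cite[Corollary~1]{BRT}), and then applies the Newhouse dichotomy for Hamiltonians~\cite{N,MBJLD2} to conclude that $H$ is $C^2$-approximated by a Hamiltonian exhibiting a $1$-elliptic closed orbit. This black-box route sidesteps entirely the Krein-theoretic obstruction you (correctly) identify; its cost is the reliance on a fairly deep external dichotomy. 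Your approach is more self-contained but longer and more delicate.

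There is, however, a genuine gap in your realization step. After the explosion lemma you have the period-$\pi$ orbit through $q$ with central block $R$ satisfying $R^m=\mathrm{id}$, and you wish to perturb the time-$m\pi$ transversal map, whose central block is the identity. Neither of your two suggested tools applies at $q$ for that purpose: Franks' lemma (Theorem~\ref{Frank}) requires an injective arc, and $X_{H_1}^{[0,m\pi]}(q)$ wraps $m$ times; the suspension theorem (Theorem~\ref{Suspension}) perturbs the \emph{first-return} Poincar\'e map at $q$, which is the time-$\pi$ map with central block $R$, not $R^m$. And perturbing $R$ small in $Sp(2k)$ keeps its (Krein-definite) eigenvalues on $\mathbb{S}^1$, hence so are those of its $m$-th power --- the very rigidity you flagged is not circumvented. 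The fix is to work not at $q$ but at one of the \emph{nearby} periodic points $q'\in\Sigma_q^c\setminus\{q\}$ produced by the explosion: a generic such $q'$ (with nonzero component in each central $2$-plane) has minimal period $\approx m\pi$, its orbit is an injective arc of that length, and since $f_1$ is linear the transversal map at $q'$ for its full period is exactly $L^m$, with central block $R^m=\mathrm{id}$. Now Franks' lemma applies at $q'$ and the prescribed $1$-elliptic spectrum can be installed. With this amendment your construction goes through.
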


\begin{proof}
Let $(H,e, \mathcal{E}_{H,e})$ be a Hamiltonian system with a non-hyperbolic periodic point $p \in \mathcal{E}_{H,e}$. As the boundary of the Anosov Hamiltonian systems has no isolated points (see~\cite[Corollary 1]{BRT}), it follows from Newhouse dicothomy for Hamiltonians~\cite{N, MBJLD2} that $H$ can be $C^2$-approximated by a Hamiltonian exhibiting $1$-elliptic periodic points.
Since, by Remark~\ref{elliptic open},  $1$-elliptic periodic points are stable, it follows from Robinson's version of the Kupka-Smale theorem (see~\cite{R0}) that there exists a Kupka-Smale Hamiltonian system
$(H_0,e_0, \mathcal{E}_{H_0,e_0})$, arbitrarily close to $(H,e, \mathcal{E}_{H,e})$, such that
$H_0$ has a $1$-elliptic periodic point in $\mathcal{E}_{H_0,e_0}$.
\end{proof}

Now we are in position to prove item \mbox{(d)} of Theorem~\ref{thm3}.

\begin{proof}
Given a Hamiltonian system $(H,e, \mathcal{E}_{H,e})$ satisfying the stable weak specification property, we prove that all its periodic orbits are hyperbolic; from
this it follows that $(H,e, \mathcal{E}_{H,e})$ is a Hamiltonian star system.

By contradiction, let us assume that there exists a Hamiltonian system $(H,e, \mathcal{E}_{H,e})$
satisfying the stable weak specification property and
having a non-hyperbolic periodic point $p \in \mathcal{E}_{H,e}$. Let $\mathcal{V}$ be a neighbourhood of
$(H,e, \mathcal{E}_{H,e})$ such that the weak specification property is verified.
Using Lemma~\ref{keySP} there exists a Kupka-Smale Hamiltonian $({H_0},{e_0}, \mathcal{E}_{{H_0},{e_0}}) \in \mathcal{V}$
such that ${H_0}$ has a non-hyperbolic periodic point which contradicts
Lemma~\ref{hypnonhyp}.

\end{proof}

\end{section}

\section{Partial hyperbolicity \emph{versus} dense elliptic orbits (proof of Theorems~\ref{thm4} and~\ref{thm5})}\label{arnaud}
\subsection{Proof of Theorem~\ref{thm5}}

A Hamiltonian system $(H,e, \mathcal{E}_{H,e})$ is \textsl{partially hyperbolic} if $\mathcal{E}_{H,e}$ is partially hyperbolic.
Let $\mathcal{PH}^2_{\omega}(M)\subset C^2(M,\mathbb{R})$ denote the subset of partially hyperbolic Hamiltonians \footnote{Observe that, due to Remark~\ref{Mane}, if $n=2$, then $\mathcal{PH}^2_{\omega}(M)$ is equal to the Anosov Hamiltonian systems.}.

Fix $H\in \mathcal{PH}^2_{\omega}(M)$ and let  $e\in H(M)$ be an energy such that the subset $H^{-1}(\{e\})$ has a partial hyperbolic component $\mathcal{E}_{H,e}$. For any $\tilde{H}$ arbitrarilly $C^2$-close to $H$ and $\tilde{e}$ arbitrarially close to $e$, we get that the analytic continuation of $\mathcal{E}_{H,e}$, $\mathcal{E}_{\tilde{H},\tilde{e}}$, is still partially hyperbolic. Thus, in other words, partial hyperbolicity is an \emph{open} property. The proof is similar to the openness of the hyperbolicity done in \cite{BFR} and mainly uses cone field arguments.

The proof of Theorem~\ref{thm5} is a consequence of Theorem \ref{thm4}.

\begin{proof}
Consider the set
$$
\mathcal{G}=C^2(M,\mathbb{R}) \times M
$$
endowed with the
product topology associated to the $C^2$-topology in $C^2(M,\mathbb{R})$ and with the topology inherited by the Riemannian structure in $M$.
Given $p\in M$, let $\mathcal{E}_{H,e}$ be the energy surface passing through $p$.
As we mention before the subset
$$
\mathcal{PH}:=\left\{(H,p)\in\mathcal{G}\colon \mathcal{E}_{H,e} \text{ is a partially hyperbolic regular energy hypersurface} \right\}
$$
is open.
Let $\overline{\mathcal{PH}}$ be its closure (w.r.t. the $C^2$-topology) with complement $\mathcal{N}=\mathcal{G}\setminus \overline{\mathcal{PH}}$.

Given $\epsilon>0$ and an open set $\mathcal{U}\subset \mathcal{N}$, define the subset $\mathcal{O}(\mathcal{U},\epsilon)$ of pairs $(H,p)\in\mathcal{U}$ for which $H$ has a closed elliptic orbit intersecting the $(2n-1)$-dim ball $B(p,\epsilon)\cap \mathcal{E}_{H,e}$. This is possible due to Theorem~\ref{thm4}.
It follows from Theorem~\ref{thm4} and the fact that (totally)-elliptic orbits are stable (Remark~\ref{elliptic open}), that $\mathcal{O}(\mathcal{U},\epsilon)$ is dense and open in $\mathcal{U}$.

Let $(\epsilon_k)_{k\in\mathbb{N}_0}$ be a positive sequence such that $\epsilon_k {\rightarrow} 0$ when ${k\rightarrow 0}$.
Then, define recursively the sequence of dense and open sets $\mathcal{U}_0 = \mathcal{N}$ and $
\mathcal{U}_{k}=\mathcal{O}(\mathcal{U}_{k-1}, \epsilon_{k-1})$, $ k\in\mathbb{N}$.
Notice that $\bigcap_{k\in\mathbb{N}}\mathcal{U}_k$ is the set of pairs $(H,p)$ yielding the property that $p$ is accumulated by closed elliptic orbits for $H$.

Finally, the above implies that, for each $k\in\mathbb{N}$,  $\mathcal{PH}\cup\mathcal{U}_k$ is open and dense in $\mathcal{G}$, and
$$
\mathfrak{F}:=\bigcap_{k\in\mathbb{N}}(\mathcal{PH}\cup\mathcal{U}_k)=\mathcal{PH}\cup \bigcap_{k\in\mathbb{N}}\mathcal{U}_k
$$
is residual.
By~\cite[Proposition A.7]{BF}, we write
$$
\mathfrak{F}=\bigcup_{H\in\mathfrak{R}} \{H\} \times\mathfrak{M}_H,
$$
where $\mathfrak{R}$ is $C^2$-residual in $C^2(M,\mathbb{R})$ and, for each $H\in\mathfrak{R}$, $\mathfrak{M}_H$ is a residual subset of $M$, having the following property:
if $H\in\mathfrak{R}$ and $p\in\mathfrak{M}_H$, then $\mathcal{E}_{H,e}$ is partially hyperbolic or $p$ is accumulated by closed elliptic orbits.

\end{proof}

\subsection{Proof of Theorem~\ref{thm4}}

We begin by considering the following result which is a kind of closing lemma of strong type.

\begin{lemma}\label{SXhamilt}
For any homoclinic point $z$ associated to the periodic hyperbolic point $x$ of $H\in C^\infty(M,\mathbb{R})$, there exists an arbitrarily small $C^2$-perturbation of $H$ supported in a small neighborhood of $x$ such that $z$ becomes a periodic point.
\end{lemma}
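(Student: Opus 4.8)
The plan is to reduce the statement to a closing result for the Poincar\'e return map and then suspend. Fix a transversal section through a point of $\mathcal{O}(x)$ and let $f$ be the associated Poincar\'e map, a local symplectomorphism for which (abusing notation) $x$ is a hyperbolic fixed point; the homoclinic point $z$ produces a homoclinic point $\zeta \in W^s(x)\cap W^u(x)$, $\zeta\neq x$, of $f$. A symplectomorphism $g$, equal to $f$ outside a small neighbourhood $V$ of $x$ and possessing a periodic orbit through (the continuation of) $\zeta$, is lifted back, by the Hamiltonian suspension theorem (Theorem~\ref{Suspension}), to a Hamiltonian $H_0$ that is $C^2$-close to $H$, coincides with $H$ off a small neighbourhood of $\mathcal{O}(x)$, and for which $z$ is periodic. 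After an arbitrarily small preliminary perturbation if necessary (Robinson's Kupka--Smale theorem, \cite{R0}), I assume the homoclinic intersection at $\zeta$ is transverse, so the local invariant manifolds are well defined.

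\textbf{Linearisation and choice of entry/exit points.} Exactly as in the proof of Lemma~\ref{continuum} (Darboux coordinates together with the weak pasting lemma for symplectomorphisms, cf. Remark~\ref{linear}), an arbitrarily small perturbation makes $f$ equal to its linear part $A=Df_x$ on a neighbourhood $V$ of $x$, where $A$ is a hyperbolic symplectic automorphism with invariant splitting $E^s\oplus E^u$; the local stable and unstable manifolds then coincide with $E^s$ and $E^u$. Since $\zeta$ is homoclinic, its forward orbit enters $V$ and converges to $x$ along $E^s$, while its backward orbit enters $V$ and converges to $x$ along $E^u$; choosing $V$ small, the remaining finitely many orbit points of $\zeta$ lie outside $V$, so the orbit makes a single excursion. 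Let $p\in W^s$ and $q\in W^u$ be the entry and exit points of this orbit on $\partial V$, both at a definite distance from $x$; unperturbed one has $f^{N}(q)=p$ for some $N\ge 1$, with the intervening arc outside $V$.

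\textbf{The crux: a sheared linear modification.} The heart of the matter is to replace $A$ inside $V$ by an arbitrarily $C^1$-small symplectic perturbation $B$ (keeping $g=f$ off $V$) so that $g$ acquires a periodic orbit shadowing the homoclinic loop. I would take $B=A(I+S)$, with $S$ a small symplectic shear feeding a little of the unstable direction into the stable one (so $S$ is symmetric relative to the pairing of $E^s$ with $E^u$ induced by $\omega$). Under iteration of $B$ the $E^s$-branch no longer converges to $x$: after approaching $x$, the injected unstable component, amplified by the expansion of $E^u$, grows and steers the orbit back out along the unstable cone. Hence for a suitable large $r$ the sheared orbit enters $V$ near $p$, makes $r$ near-$x$ passages, and exits near $q$; the fact that $r$ is large is exactly what keeps $\|S\|$, and hence $\|B-A\|$, small. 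Matching this internal branch with the external excursion from $q$ back to $p$ is then solved by the implicit function theorem, adjusting $p,q$ slightly and using transversality of the homoclinic intersection. This linear modification near $x$ is realised at the flow level by Franks' lemma for Hamiltonians (Theorem~\ref{Frank}), localised by the pasting lemma (Theorem~\ref{Pasting}) so that $H$ is unchanged along the transiting homoclinic arc and outside $V$.

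\textbf{Main obstacle and conclusion.} The decisive difficulty is precisely this size control: achieving the connection with a perturbation that is simultaneously $C^1$-small, symplectic, and supported strictly inside $V$ without disturbing the external excursion or the infinitely many orbit points of $\zeta$ piling up at $x$ along $E^s$ and $E^u$. This is the characteristic obstruction of strong closing lemmas, and a single $C^0$-displacement near $x$ would fail to be $C^1$-small; here it is overcome by spreading the effect over the many near-$x$ passages and exploiting hyperbolicity (the expansion amplifies the tiny shear) together with the symplectic freedom supplied by Franks' lemma. Once $g$ is constructed and $\zeta$ is periodic for it, the Hamiltonian suspension theorem (Theorem~\ref{Suspension}) returns the desired $C^2$-close Hamiltonian $H_0$, supported near $\mathcal{O}(x)$, for which $z$ is a periodic point, completing the proof.
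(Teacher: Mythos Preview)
Your overall architecture---reduce to the Poincar\'e return map at $x$, perturb there to close the homoclinic orbit, then lift via the Hamiltonian suspension theorem (Theorem~\ref{Suspension})---is exactly the paper's. The difference is that the paper does not attempt to \emph{construct} the closing perturbation of the return map: it simply invokes \cite[Lemma~10]{SX}, which already provides, for any $f\in\mathrm{Diff}^3_\omega$ and any homoclinic point of a hyperbolic periodic point of $f$, an arbitrarily $C^3$-small symplectic perturbation supported near that point making the homoclinic point periodic. Suspension then turns this $C^3$-small perturbation of the map into the desired $C^2$-small perturbation of $H$. Your third paragraph is thus an attempt to reprove \cite[Lemma~10]{SX} from scratch.

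That attempt has real gaps. First, the matching step is not justified: you assert that the implicit function theorem closes the loop by ``adjusting $p,q$ slightly,'' but $p,q$ are pinned down by the fixed external excursion, and what you actually need is a choice of shear $S$ and an integer $r$ with $B^r(p)=q$; in dimension $>2$ this is a genuine codimension problem and you give no solvability argument. Second, Theorem~\ref{Suspension} requires $C^3$-closeness of the Poincar\'e maps, whereas you only argue for $C^1$-smallness; the gluing of $B$ to $A$ near $\partial V$ must be controlled in $C^3$ and you do not address this. Third, the appeal to Franks' lemma (Theorem~\ref{Frank}) is misplaced: that lemma perturbs the transversal linear Poincar\'e flow along a prescribed orbit arc, not the return map on a full neighbourhood, so it cannot realise your modification $A\mapsto B$ on $V$. (A minor further point: your preliminary Kupka--Smale perturbation to obtain transversality is global, contradicting the ``supported in a small neighbourhood of $x$'' clause of the statement.) The clean fix is precisely what the paper does: cite \cite[Lemma~10]{SX} as a black box for the map-level step, and then suspend.
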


\begin{proof}
By \cite[Lemma 10]{SX}, for any homoclinic point $z$ associated to the periodic hyperbolic point $x$ of $f\in \text{Diff}^3_{\omega}(M^{2n-2})$, there exists an arbitrarily small $C^3$ perturbation of $f$,  $\tilde{f}\in\text{Diff}^3_{\omega}(M^{2n-2})$, supported in a small neighborhood of $x$ such that $z$ becomes a periodic point.

Since periodic points are dense in the homoclinic class, we can choose a periodic point $p$ close to $x$. We consider the Poincar\'e map of $H$ in a small transversal section at $x$ and define it as the symplectic map $f$ obtained in \cite[Lemma 10]{SX}. Finally, the Hamiltonians suspension theorem (Theorem~\ref{Suspension}), gives the perturbation required in the statement of the lemma.

\end{proof}

Take $H\in C^2(M,\mathbb{R})$. Since the time-$1$ map of any tangent flow derived from a Hamiltonian vector field is measure preserving, we obtain a version of Oseledets' theorem (\cite{Oseledets}) for Hamiltonian systems. Thus, there exists a decomposition
$\mathcal{N}_x= \mathcal{N}^1_x \oplus \mathcal{N}^2_x\oplus \dots \oplus \mathcal{N}_{x}^{k(x)}$ called \emph{Oseledets splitting} and, for $1\le i\le k(x)\leq 2n$,
there are well defined real numbers
$$
\lambda_i(H,x)= \lim_{t\to\pm \infty} \frac1t \log \|\Phi_H^t(x) \cdot v_i\|,
	\quad \forall v_i \in E^i_x\setminus \{0\},
$$
called the \emph{Lyapunov exponents} associated to $H$ and $x$.
Since we are dealing with Hamiltonian systems (which imply the volume-preserving property),
we obtain that
\begin{equation}\label{eq.lyap1}
\sum_{i=1}^{k(x)} \lambda_i(H,x) =0.
\end{equation}
Notice that the spectrum of the symplectic linear map $\Phi^t_H$ is symmetric with respect to the $x$-axis and to the unit circle. In fact, if $\sigma\in\mathbb{C}$ is an eigenvalue with multiplicity $m$ so is $\sigma^{-1}$, $\overline{\sigma}$ and $\overline{\sigma}^{-1}$ keeping the same multiplicity (cf. Theorem~\ref{sympeigen}). Consequently, in the Hamiltonian context the Lyapunov exponents come in pairs and, for all $i\in\{1,...,n-1\}$, we have
\begin{equation}\label{eq.lyap2}
\lambda_i(H,x)=-\lambda_{2n-i-1}(H,x):=-\lambda_{\hat{i}}(H,x).
\end{equation}
Therefore, not counting the multiplicity and abreviating $\lambda(H,x)=\lambda(x)$, we have the increasing set of real numbers,
\begin{equation*}\label{eq.lyap3}
\lambda_1(x)\geq \lambda_2(x)\geq ... \geq \lambda_{n-1}(x) \geq 0 \geq -\lambda_{n-1}(x)\geq ... \geq -\lambda_2(x)\geq -\lambda_1(x),
\end{equation*}
or simply
\begin{equation*}\label{eq.lyap4}
\lambda_1(x)\geq \lambda_2(x)\geq ... \geq \lambda_{n-1}(x) \geq 0 \geq \lambda_{\hat{n-1}}(x)\geq ... \geq \lambda_{\hat{2}}(x)\geq \lambda_{\hat{1}}(x).
\end{equation*}
Associated to the Lyapunov exponents we have the Oseledets decomposition
\begin{equation}\label{eq.lyap5}
T_x M
	=\mathcal{N}^1(x)\oplus \mathcal{N}^2(x)\oplus ... \oplus \mathcal{N}^{n-1}(x) \oplus \mathcal{N}^{\hat{n-1}}(x)\oplus ... \oplus \mathcal{N}^{\hat{2}}(x)\oplus \mathcal{N}^{\hat{1}}(x).
\end{equation}

When all Lyapunov exponents are equal to zero, we say that the Oseledets splitting is trivial.
The vector field direction $\mathbb{R}X_{H}(x)$ is trivially an Oseledets's direction with zero Lyapunov exponent and its ``symplectic conjugate" is the direction transversal to the energy level.

\medskip

\begin{remark}
Let $p\in M$ be a closed orbit for $X^t_H$ of period $\pi$. Then, $\lambda_i=\pi^{-1}\log|\sigma_i|$ are the Lyapunov exponents, where $\sigma_i$ are the eigenvalues of $\Phi^{\pi}_H(p)$. Moreover, the Oseledets decomposition is defined by the eigendirections. Observe that eigenvalues can be complex and Lyapunov exponents are real numbers.
\end{remark}

\medskip

Define $\Lambda_i(x)=\lambda_1(x)+\lambda_{2}(x)+...+\lambda_{i}(x)$ which represents the top exponential growth of the $i$-dimensional volume corresponding to the evolving of $\Phi^t_H(x)$ (for details see \cite[\S 3.2.3]{LA}).

The main principle that makes the argument for the proof of our results possible is the following one due to Ma\~n\'e:

\medskip

\noindent\textbf{Ma\~n\'e principle:}
\emph{In the absence of a dominated splitting some perturbation of $\Phi^t_H$, by rotating its solutions, can be done in order to lower the Lyapunov exponents associated to the splitting without domination.}

\medskip

Actually, the ideas presented here are based on the now well-known Ma\~n\'e seminal ideas of mixing different Oseledets directions in order to decay its expansion rates and was deeply explored in \cite{Bessa, BR, MBJLD, BV2, SX, V}. This is the content of the following two lemmas. We observe that our notation with respect to the order of the Lyapunov exponents is inverted when compared to the one used in ~\cite{SX}, however, the proofs follows equally.
We recall that a splitting $E\oplus F$ has \emph{index} $k$ if $\dim(E)=k$. In our case the index is the dimension of the Oseledets subspace associated to the exponents $\lambda_1$,...,$\lambda_{i-1}$.

\begin{lemma}\label{SXia}
Let $H\in C^2(M,\mathbb{R})$, $x\in \mathcal{E}_{H,e}$ a hyperbolic periodic point for $X^t_H$ and $\lambda_{i-1}(H,x)-\lambda_i(H,x)>\delta$ where $\delta>0$. Moreover, we assume that $H$ does not have an $\ell$-dominated splitting of index $i-1$. Then, there exist $H_0$, such that $\|H-H_0\|<\epsilon({\ell})$ and $y \in {\mathcal{E}_{H_0,e_0}}$ a hyperbolic periodic point of $H_0$, arbitrarilly close to $x$, such that:
\begin{equation}\label{perturb}
\Lambda_{i-1}(H_0,y)<\Lambda_{i-1}(H,x)-\frac{\delta}{2}.
\end{equation}
\end{lemma}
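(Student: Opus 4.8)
The plan is to prove the reduction by a Mañé-type rotation of the Oseledets directions along the closed orbit, realized as a genuine Hamiltonian through the Hamiltonian Franks' lemma (Theorem~\ref{Frank}). First I would set up the splitting. Over the closed orbit $\mathcal{O}(x)$, group the $\Phi_H^t$-invariant Oseledets decomposition as $\mathcal{N}=E\oplus F$, where $E$ collects the directions of the top exponents $\lambda_1(H,x)\geq\cdots\geq\lambda_{i-1}(H,x)$ and $F$ collects the remaining ones; this is the candidate index-$(i-1)$ splitting. Since $x$ is periodic of period $\pi$, the quantity $\Lambda_{i-1}(H,x)$ is exactly the exponential growth rate of the $(i-1)$-dimensional volume carried by $E$ under the return map $A:=\Phi_H^\pi(x)$, that is $\Lambda_{i-1}(H,x)=\tfrac1\pi\log\bigl|\det\bigl(A|_{E_x}\bigr)\bigr|$.

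The key mechanism is the failure of domination. Because $H$ has no $\ell$-dominated splitting of index $i-1$, the pair $(E,F)$ violates the inequality of Definition~\ref{DDPoinc} somewhere on the orbit: there is $z\in\mathcal{O}(x)$ with $\|\Phi_H^{\ell}(z)|_{E_z}\|\cdot\|\Phi_H^{-\ell}(X_H^{\ell}(z))|_{F_{X_H^{\ell}(z)}}\|>\tfrac12$. This near-equality of growth between the weakest direction of $E$ and the strongest of $F$ over the window of length $\ell$ is precisely the room that permits a definite-angle rotation mixing a unit vector $u\in E_z$ realizing $\|\Phi_H^\ell(z)|_{E_z}\|$ with a vector in $F$. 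Crucially, to stay inside the symplectic category the rotation $R_\theta$ must be supported on a symplectically paired plane, dictated by the pairing $\lambda_i=-\lambda_{\hat\imath}$ of equation~\eqref{eq.lyap2} and Theorem~\ref{sympeigen}, so that $R_\theta$ is a transversal symplectic perturbation of $\Phi_H^\ell(z)$. Inserting $R_\theta$ tilts the image of $E$ toward the slower subspace $F$, so the composed return map $\Psi:=R_\theta\circ A$ has strictly smaller top $(i-1)$-volume growth.

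Next I would invoke Theorem~\ref{Frank}. Choosing $\theta$ small enough to lie below the Franks constant $\delta=\delta(\epsilon(\ell),\pi,x)$, the lemma produces $H_0$, $\epsilon(\ell)$-$C^2$-close to $H$, with $\Phi_{H_0}^\pi(x)=\Psi$ and $H_0=H$ on $X_H^{[0,\pi]}(x)$; the latter keeps the orbit intact, so $x$ persists as a periodic point of $H_0$, and we take $y$ to be this (or a nearby) periodic point, which an arbitrarily small further perturbation renders hyperbolic. For the estimate, the gap hypothesis $\lambda_{i-1}(H,x)-\lambda_i(H,x)>\delta$ guarantees a definite separation between the weakest direction of $E$ and the strongest of $F$, so that the tilt induced by $R_\theta$, propagated by periodicity, lowers the summed top exponents by a controlled amount; a direct volume computation then yields $\Lambda_{i-1}(H_0,y)<\Lambda_{i-1}(H,x)-\tfrac{\delta}{2}$. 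Since the construction mirrors \cite{SX} step by step, only with the ordering of exponents reversed, the constants transfer verbatim.

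The main obstacle is keeping the entire construction symplectic while extracting a quantitatively sufficient decay. Unlike the volume-preserving or general Mañé setting, the mixing cannot be an arbitrary planar rotation: it must preserve $\omega_{\mathcal{E}_{H,e}}$, which forces rotations in symplectically paired planes and careful use of the paired Lyapunov spectrum. Realizing the prescribed symplectic $\Psi$ as a true Hamiltonian, with orbit and energy level preserved, is exactly what Theorem~\ref{Frank} delivers; but simultaneously achieving the required reduction $\tfrac{\delta}{2}$ while confining $R_\theta$ within the admissible Franks size $\delta(\epsilon(\ell))$ is the delicate balancing point of the argument.
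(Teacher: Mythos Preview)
Your proposal has a genuine gap: you try to extract the non-domination and perform the rotation \emph{along the original periodic orbit} $\mathcal{O}(x)$, but neither step works there. The hypothesis ``$H$ has no $\ell$-dominated splitting of index $i-1$'' is a statement about the whole energy hypersurface, not about $\mathcal{O}(x)$. On $\mathcal{O}(x)$ the splitting $E\oplus F$ you write down is the eigenspace decomposition of a hyperbolic return map with spectral gap $\lambda_{i-1}-\lambda_i>\delta$; this splitting is automatically $\ell$-dominated along $\mathcal{O}(x)$ for all large enough $\ell$, so your ``bad'' point $z\in\mathcal{O}(x)$ need not exist. Moreover, even granting a bad point, inserting a single symplectic rotation $R_\theta$ with $\theta$ bounded by the Franks constant only perturbs the return map $A=\Phi_H^\pi(x)$ to $R_\theta\circ A$; this changes the eigenvalue moduli, and hence $\Lambda_{i-1}$, by an amount of order $\theta/\pi$, which cannot be made $\geq\delta/2$ while keeping $\theta$ below the Franks threshold. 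The Ma\~n\'e mechanism needs \emph{many} windows where domination fails, accumulated along a very long orbit segment, to produce a definite drop.

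This is exactly why the paper (and \cite{SX}) does not perturb $\mathcal{O}(x)$ directly. It first perturbs so that the homoclinic class of $x$ is the whole hypersurface, then extends $E\oplus F$ to the homoclinically related set $\mathcal{H}_x$; since an $\ell$-dominated splitting on $\mathcal{H}_x$ would extend to its closure and contradict the hypothesis, the failure of $\ell$-domination must be witnessed at some \emph{homoclinic} point $z$. Along the (arbitrarily long) orbit segment of $z$ one applies Theorem~\ref{Frank} repeatedly to tilt $E$ toward $F$, obtaining the quantitative inequality for the cocycle over that segment, and only then uses Lemma~\ref{SXhamilt} to close $z$ into a new periodic point $y$ near $x$ carrying the lowered $\Lambda_{i-1}$. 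Your sketch omits both the passage to $\mathcal{H}_x$ and the closing step via Lemma~\ref{SXhamilt}; without them the argument does not produce the required $\delta/2$ decay.
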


\begin{proof}
The proof follows the same lines of the one in \cite[Proposition 9]{SX}. Let us recall the main steps: First, by \cite[Corollary 3.9]{BFR2}, we know that there is a residual set $\mathcal{R}$ in $C^2(M,\mathbb{R})$ such that, for any $H\in\mathcal{R}$, there is an open and dense set $\mathcal{S}(H)$ in $H(M)$ such that if $e\in\mathcal{S}(H)$ then any energy hypersurface of $H^{-1}(\{e\})$ is a homoclinic class. Actually, we can make a small perturbation on the Hamiltonian and on the energy in order to obtain that, given any hyperbolic periodic point $x$ of $H$, the set of its homoclinic related points, $\mathcal{H}_x$, is dense on $\mathcal{E}_{H,e}$. Moreover, we can do these perturbations arbitrarily small to guarantee that we still do not have $\ell$-dominated splitting of index $i-1$ for the analytic continuation of $x$.

Second, using the spectral gap hypothesis on $x$, i.e, $\lambda_{i-1}(H,x)-\lambda_i(H,x)>\delta$, we can spread this property to $\mathcal{H}_x$ by defining subbundles $E$ and $F$ of $\mathcal{N}_{\mathcal{H}_x}$, where $E$ is associated to Lyapunov exponents greater or equal than $\lambda_{i-1}$ and $F$ is associated to Lyapunov exponents less or equal than $\lambda_{i}$. Since the dominated splitting can be extended to the closure (see \cite{BDV}), if $E$ $\ell$-dominates $F$ in $\mathcal{H}_x$, then it can be extended to the whole energy hypersurface which is a contradiction.

Third, we use the lack of dominated splitting on $E\oplus F$ (say $E$ does not $\ell$-dominates $F$) to send directions in $E$ into directions in $F$ by small $C^2$ local perturbations along the segment of the orbit of a homoclinic point $z$. To put into operation Ma\~n\'e's principle we must use Theorem~\ref{Frank} several times. This will imply the desire inequalities (\ref{perturb}) for the homoclinic point $z$.

Finally, we just have to use Lemma~\ref{SXhamilt} to obtain a small perturbation that makes $z$ periodic.

\end{proof}

As an almost immediate consequence of Lemma~\ref{SXia}, we obtain (see ~\cite[Corollary 11]{SX}):

\begin{lemma}\label{SXia2}
Let $H\in C^2(M,\mathbb{R})$ and  $\mathcal{E}_{H,e}$ be an energy hypersurface without a dominated splitting of index $i-1$. Then, there exists $H_0$ arbitrarilly close to $H$ such that $H_0$ has a closed orbit $p$ with $\lambda_i(p)=\lambda_{i-1}(p)$.
\end{lemma}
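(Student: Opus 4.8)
The plan is to iterate Lemma~\ref{SXia} to drive the spectral gap between the $(i-1)$-th and the $i$-th Lyapunov exponents down to zero, and then to close it \emph{exactly} by one last linear-algebraic perturbation realized through the lemmas of \S\ref{hamilperturbsection}. The monotone quantity steering the argument is $\Lambda_{i-1}$. First I would record that $\Lambda_{i-1}$ is uniformly bounded below: by the symplectic symmetry of the Lyapunov spectrum in \eqref{eq.lyap2}, which places the relevant index in the range $1\le i-1\le n-1$ (cf. Remark~\ref{Mane}), the top $i-1$ exponents are nonnegative, so $\Lambda_{i-1}(H_0,p)\ge 0$ for every periodic orbit $p$ of every $H_0$ in a fixed $C^2$-neighbourhood of $H$.

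Next I would build a sequence $H=H^{(0)},H^{(1)},H^{(2)},\dots$ together with hyperbolic periodic orbits $y_k$ of $H^{(k)}$. Starting from any hyperbolic periodic point of $H$ (available from the density of periodic points in homoclinic classes invoked in the proof of Lemma~\ref{SXia}), if some $y_k$ already satisfies $\lambda_{i-1}(y_k)=\lambda_i(y_k)$ we stop; otherwise the gap $\delta_k:=\lambda_{i-1}(y_k)-\lambda_i(y_k)$ is positive and, since the absence of an $\ell$-dominated splitting of index $i-1$ is preserved along the construction (this is precisely what the proof of Lemma~\ref{SXia} guarantees for the analytic continuation), Lemma~\ref{SXia} produces $H^{(k+1)}$ with a hyperbolic periodic orbit $y_{k+1}$ satisfying $\Lambda_{i-1}(H^{(k+1)},y_{k+1})<\Lambda_{i-1}(H^{(k)},y_k)-\delta_k/2$. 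Because $\Lambda_{i-1}$ decreases at each step yet stays $\ge 0$, the telescoping bound $\sum_k \delta_k/2\le \Lambda_{i-1}(H^{(0)},y_0)<\infty$ forces $\delta_k\to 0$; in particular this already happens after finitely many steps, so for $k$ large the orbit $y_k$ has arbitrarily small spectral gap.

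Finally I would close the gap exactly. Fix $k$ with $\delta_k<\eta$ and work with the Poincar\'e return map of $X^t_{H^{(k)}}$ at $y_k$, which by Remark~\ref{linear} may be taken linear in Darboux coordinates. Inside the $\Phi^t_H$-invariant symplectic subspace spanned by the Oseledets directions associated with $\pm\lambda_{i-1}$ and $\pm\lambda_i$, the corresponding moduli lie within $e^{O(\eta)}$ of one another; a symplectic perturbation of size $O(\eta)$ of this linear map equalizes them, replacing the two real pairs by directions of common exponent $(\lambda_{i-1}+\lambda_i)/2$, which is possible exactly because there is no domination to obstruct the mixing rotation. Applying Franks' lemma for Hamiltonians (Theorem~\ref{Frank}) and, if needed, the Hamiltonian suspension (Theorem~\ref{Suspension}) realizes this as a Hamiltonian $H_0$, $C^2$-close to $H^{(k)}$ and hence to $H$, carrying a closed orbit $p$ with $\lambda_i(p)=\lambda_{i-1}(p)$.

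I expect the main obstacle to be this last step: upgrading the qualitative statement $\delta_k\to 0$ into the exact equality $\lambda_i(p)=\lambda_{i-1}(p)$ at a genuine periodic orbit. This requires that the equalizing rotation be genuinely symplectic and compatible with the pairing in \eqref{eq.lyap2}, that its $C^2$-size be controlled by the already-small gap, and that it create no dominated splitting violating the standing hypothesis. A secondary technical point is budgeting the $C^2$-sizes in the iteration so that the successive perturbations sum to less than the prescribed $\epsilon$ while each still delivers the decrease of Lemma~\ref{SXia}; this uses the flexibility of the underlying Ma\~n\'e construction, whose rotations can be realized with arbitrarily small $C^2$-amplitude at the cost of longer orbit segments.
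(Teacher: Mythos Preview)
Your proposal is correct and follows essentially the same approach as the paper, which offers no proof of Lemma~\ref{SXia2} beyond the remark that it is ``an almost immediate consequence of Lemma~\ref{SXia}'' together with a citation to \cite[Corollary~11]{SX}; the Saghin--Xia argument is precisely the iteration you outline, with the nonnegative monotone quantity $\Lambda_{i-1}$ forcing $\sum_k\delta_k<\infty$ and hence $\delta_k\to0$.

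Two small remarks. First, the ``final step'' you flag as the main obstacle is lighter than you make it: once $\delta_k$ is small, a single application of Theorem~\ref{Frank} along the orbit of $y_k$ realizes a symplectic perturbation of $\Phi_{H^{(k)}}^{\pi}(y_k)$ of size $O(\delta_k)$ that equalizes the two relevant moduli (the perturbed orbit is the \emph{same} closed orbit, since Theorem~\ref{Frank} fixes $X_H^{[0,t]}(x)$), so neither Remark~\ref{linear} nor Theorem~\ref{Suspension} is needed here. Second, the persistence of the hypothesis ``no $\ell$-dominated splitting of index $i-1$'' along the iteration is the genuinely delicate budgeting point: lacking a dominated splitting is not an open condition, so one must choose the successive $\ell_k$ and perturbation sizes simultaneously, exactly as done in the first paragraph of the proof of Lemma~\ref{SXia} (and as the paper exploits in the proof of Theorem~\ref{thm4}, where the stronger ``far from partially hyperbolic'' hypothesis makes this automatic on a whole neighbourhood).
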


Now we give the highlights of the proof since we follow closely \cite[\S 7]{SX}.

\begin{proof}
Let be given an open subset $U\subset M$ and let $H$ be a $C^2$-Hamiltonian with a far from partially hyperbolic re\-gu\-lar energy hypersurface intersecting $U$. We will prove that $H$ can be $C^{2}$-approximated by a $C^\infty$-Hamiltonian $H_0$ having a closed elliptic orbit through $U$.

By Remark ~\ref{Mane}, the existence of a dominated splitting implies partial hyperbolicity. Thus, if some energy hypersurface $\mathcal{E}_{H,e}$ intersects $U$ and is not   partially hyperbolic, then $\mathcal{E}_{H,e}$ does not have a dominated splitting at any fiber decomposition of the normal subbundle $\mathcal{N}$ that we consider.

Observe that $\epsilon$-close to $H$ we have that all systems have energy hypersurfaces far from being $\ell_\epsilon$-dominated. By contradiction, we assume that the systems is ``far" from having elliptic closed orbits, i.e., arbitrarily close to $H$ there are no elliptic closed orbits inside the intersection of a regular energy hypersurface and $U$. Thus, all closed orbits have some positive Lyapunov exponent $\lambda$.

Then, Lemma~\ref{SXia} is used several times to create a sequence of Hamiltonians $C^2$-converging to $H$ with a  Lyapunov exponent at the closed orbits passing throughout $U$ less than $r\lambda$, where $r\in(0,1)$ but close to $1$ which is a contradiction.
\end{proof}

\section{Weak shadowing (proof of Theorem~\ref{thm6})}

The next result says, in brief terms, that if a Hamiltonian system can be perturbed in order to create elliptic points, then for small
perturbations an iterate of the Poincar\'e map associated to the elliptic point is the identity. This prevents the
weak shadowing property.

\begin{proposition}\label{ML}
Let $(H,e, \mathcal{E}_{H,e})$ be a stably weakly shadowable Hamiltonian system.
Then, there exists a neighbourhood $\mathcal{V}$ of  $(H,e, \mathcal{E}_{H,e})$ such that
any $(H_0,e_0, \mathcal{E}_{H_0,e_0}) \in \mathcal{V}$ does not have elliptic points in $\mathcal{E}_{H_0,e_0}$.
\end{proposition}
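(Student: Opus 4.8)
The plan is to show that the stability neighbourhood for weak shadowing already excludes elliptic points, by proving that an elliptic point can be perturbed—without leaving that neighbourhood—into a configuration incompatible with weak shadowing. Concretely, let $\mathcal{V}$ be a neighbourhood of $(H,e,\mathcal{E}_{H,e})$ on which every system is weakly shadowable; this exists by the stable weak shadowing hypothesis. I argue by contradiction: suppose some $(H_0,e_0,\mathcal{E}_{H_0,e_0})\in\mathcal{V}$ has an elliptic periodic point $p$. Being elliptic, $p$ is non-hyperbolic, so Lemma~\ref{continuum} applies and yields, by an arbitrarily small perturbation (hence still inside $\mathcal{V}$), a system $(H_1,e_1,\mathcal{E}_{H_1,e_1})\in\mathcal{V}$ with a non-hyperbolic periodic point $q$ such that a whole neighbourhood of $q$ in $\Sigma^c_q$ consists of periodic points. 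By Remark~\ref{linear} the associated Poincar\'e map $f_1$ is linear near $q$ in the Darboux coordinates; since it is symplectic with all central eigenvalues on the unit circle and its central subspace is filled with periodic points, $f_1$ is a direct sum of rational rotations on $\Sigma^c_q$, so there is $N\in\N$ with $f_1^{N}=\mathrm{id}$ on a disk $D\subset\Sigma^c_q$. Thus $D$ is a continuum of closed orbits of (approximately) equal period, foliating a neighbourhood of $q$ in the center section.

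Next I would exploit this continuum to defeat weak shadowing. Fix a radial segment $\sigma=[a,b]\subset D$ transverse to the invariant foliation of $D$ by the level tori $\{|z_1|=r_1,\dots,|z_k|=r_k\}$, chosen so that the closed orbits $\gamma_a=\mathcal{O}(a)$ and $\gamma_b=\mathcal{O}(b)$ satisfy $d(\gamma_a,\gamma_b)>2\epsilon$ for a fixed small $\epsilon$. Since $(H_1,e_1,\mathcal{E}_{H_1,e_1})\in\mathcal{V}$ it has the weak shadowing property, so for this $\epsilon$ there exist $\delta,T>0$ with $\delta\ll\epsilon$ as in the definition. Using $X^{t_i}_{H_1}(x_i)=x_i$ on periodic points of $D$, I construct a $(\delta,T)$-pseudo-orbit $((x_i),(t_i))_{i\in\Z}$ lying entirely in $\sigma$: take $t_i$ equal to the (minimal) period of $x_i$, set $x_i\equiv a$ for $i\le 0$, let $x_{i+1}$ be obtained from $X^{t_i}_{H_1}(x_i)=x_i$ by a radial jump of size $<\delta$ for $0\le i<k$, and set $x_i\equiv b$ for $i\ge k$. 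Because the only error is the radial jump, this is a genuine $(\delta,T)$-pseudo-orbit whose section trace $\{x_i\}$ is $\epsilon$-dense in the transverse segment $\sigma$.

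Finally I would extract the contradiction from the rigidity of the local dynamics. Weak shadowing provides $z\in\mathcal{E}_{H_1,e_1}$ with $\{x_i\}\subset B_\epsilon(\mathcal{O}(z))$, so $\mathcal{O}(z)$ meets the $\epsilon$-neighbourhood of every point of $\sigma$. On the other hand, near $D$ the map $f_1$ is a direct sum of rotations and therefore preserves the foliation by invariant tori; consequently an orbit that remains $\epsilon$-close to the center section $\Sigma^c_q$ is pinned to a single leaf. Hence the section trace of $\mathcal{O}(z)$ inside $D$ lies within an $\epsilon$-tube of one torus, meeting $\sigma$ in a sub-segment of length at most $2\epsilon$, whereas $\sigma$ has length bigger than $2\epsilon$; some $x_i$ is then at distance $>\epsilon$ from $\mathcal{O}(z)$, contradicting weak shadowing. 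This shows that no $(H_0,e_0,\mathcal{E}_{H_0,e_0})\in\mathcal{V}$ can carry an elliptic point, proving the proposition. The main obstacle is precisely this last rigidity step: showing that a single true orbit cannot drift transversally across the continuum of closed orbits while staying $\epsilon$-close to the center section. In the totally elliptic case this is immediate, since the whole neighbourhood is foliated by invariant tori that orbits cannot cross; in the non-totally-elliptic case it requires controlling the interplay between the rotational center dynamics and the transverse hyperbolic directions, using that staying $\epsilon$-close to the center manifold for the whole bi-infinite time forces $\mathcal{O}(z)$ to shadow a single central leaf.
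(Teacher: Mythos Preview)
Your approach is essentially the paper's own: take the weak-shadowing neighbourhood $\mathcal{V}$, assume an elliptic point exists, apply Lemma~\ref{continuum} (with Remark~\ref{linear}) to linearize the Poincar\'e map and fill a section disk with periodic points of a common period, then build a $(\delta,T)$-pseudo-orbit drifting radially from one closed orbit to another at distance $>2\epsilon$ and observe that no single closed orbit can weakly $\epsilon$-shadow it. Your construction of the pseudo-orbit and the contradiction mechanism are the same as the paper's.

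The one place where you make your life harder than necessary is the ``main obstacle'' you flag at the end. You worry about the non-totally-elliptic case, where $\Sigma^c_q\subsetneq\Sigma_q$ and the shadowing orbit $\mathcal{O}(z)$ might use the hyperbolic directions to cross leaves. But this case does not arise: ellipticity is an \emph{open} condition (Remark~\ref{elliptic open}), so the perturbation in Lemma~\ref{continuum} can be taken small enough that the resulting point $q$ is still totally elliptic. Then $\Sigma^c_q=\Sigma_q$, the \emph{entire} local transversal section is foliated by the invariant tori of the linear return map, and any point of $\mathcal{O}(z)$ that comes $\epsilon$-close to the section lands (after flow-projection) in the disk where $f_1^{N}=\mathrm{id}$. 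Hence $\mathcal{O}(z)$ is itself one of these closed orbits, its section trace is a finite set lying on a single torus, and the contradiction $d(a,b)\le 2\epsilon$ follows exactly as you sketched in the totally elliptic case. The paper invokes this stability of ellipticity explicitly and thereby sidesteps the obstacle entirely; once you add that one line, your argument is complete and coincides with the paper's.
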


\begin{proof}
Let $\mathcal{V}$ be a neighbourhood of  $(H,e, \mathcal{E}_{H,e})$ such that any Hamiltonian system in $\mathcal{V}$ is weakly shadowable.
By contradiction, let us assume that $(H_0,e_0, \mathcal{E}_{H_0,e_0}) \in \mathcal{V}$ has an elliptic point $q \in \mathcal{E}_{H_0,e_0}$ of period $\pi$. It follows from Lemma~\ref{continuum} and from the stability of elliptic points (see Remark~\ref{elliptic open}) that
there exists a Hamiltonian system $(H_1,e_1, \mathcal{E}_{H_1,e_1}) \in \mathcal{V}$ such that every point in a
$\xi$-neighborhood of $q$, in $\Sigma_q^c$, is a periodic point.
But, since in the current setting, $q$ is elliptic,
we have that $\Sigma_q^c=\Sigma_q$ and, therefore, as $f_1$ is linear, there exists
$m>0$ such that  $f_1^{m}$ is the identity map in a $\xi$-neighborhood of $q$. In order to simplify our arguments, let us suppose that
 $m=1$.
 Since $H_1$ has the weak shadowing property fixing $\epsilon \in (0,\frac{\xi}{4})$, there exists
 $\delta \in (0,\epsilon)$ and $T>0$ such that every $(\delta,T)$-pseudo-orbit $((x_i),(t_i))_{i \in \Z}$ is
 weakly $\epsilon$-shadowed by a trajectory $\mathcal{O}(z)$.

Let $x_0=q$.
Take $y\in \Sigma_q$ such that $d(q,y)=\frac{3\xi}{4}>2\epsilon$ and fix $\delta \in (0,\epsilon)$ sufficiently
small. We construct a bi-infinite sequence of points $((x_i), (t_i))_{i \in \Z}$ with $x_i \in \Sigma_q$
such that $((x_i), (t_i))_{i \in \Z}$ is a $(\delta,T)$-pseudo orbit
for some $T>0$.
For fixed $k \in \Z$, let $x_k=y$. There exist $x_i \in \Sigma_q$, $i \in \Z$ such that:
\begin{enumerate}
 \item[$\bullet$] $x_i=x_0$ and $t_i=\pi$ for $i \leq 0$;
\item[$\bullet$] $d(x_{i},x_{i-1}) < \delta$ and $t_i=\pi$ for $1 \leq i \leq k$;
\item[$\bullet$] $x_i=x_k$ and $t_i=\pi$ for $i >k$.
\end{enumerate}

Observe that we are considering that the return time at the transversal section is the same and equal to $\pi$. Clearly, it is not exactly equal to $\pi$, however
it is as close to $\pi$ as we want by just decreasing the $\xi$-neighborhood.
Therefore, $((x_i), (\pi))_{i \in \Z}$ is a $(\delta,T)$-pseudo-orbit for some $T>0$ such that $\pi \geq T$.

By the weakly shadowing property, there is a point $z \in \mathcal{E}_{H,e}$
such that $\{x_i\}_{i \in \Z} \subset \mathcal{B}_{\epsilon}(\mathcal{O}(z))$. Without loss of generality,
we may assume that $z \in \mathcal{B}(x_0,\epsilon)$.
Since $H_1$ is weakly shadowable, we have that for some $\tau=n\pi$,
$$
d(x_0,x_k) \leq d(x_0,z)+d(z,x_k) = d(x_0,z)+d((X_{H_1}^{\tau}(z),x_k)<2 \epsilon,
$$
which is a contradiction.

\end{proof}

Finally, the proof of Theorem ~\ref{thm6} is a consequence of Theorem~\ref{thm4} and Proposition~\ref{ML}.

\begin{proof}
Let $(H,e,\mathcal{E}_{H,e})$ be a stably weakly shadowable Hamiltonian system and suppose, by contradiction, that $\mathcal{E}_{H,e}$ is not partially hyperbolic.
Then, by Theorem~\ref{thm4}, there exists a $C^2$-close $C^\infty$-Hamiltonian $H_0$ with an elliptic closed orbit
on a nearby regular energy hypersurface $\mathcal{E}_{{H_0},{e_0}}$ and this contradicts Proposition~\ref{ML}.
\end{proof}

\section*{Acknowledgements}

JR was partially funded by European Regional Development Fund through the programme COMPETE and by the Portuguese Government through the FCT under the project PEst-C/MAT/UI0144/2011.

MJT was partially financed by FEDER Funds through ``Programa Operacional Factores de Competitividade - COMPETE'' and by Portuguese Funds through FCT - ``Funda\c{c}\~{a}o para a Ci\^{e}ncia e a Tecnologia'', within the Project PEst-C/MAT/UI0013/2011.

JR was partially supported by the FCT- ``Funda\c{c}\~{a}o para a Ci\^{e}ncia e a Tecnologia", project PTDC/MAT/099493/2008.

\end{section}

\end{document}